\theoremstyle{theorem}
\newtheorem {theo}{Theorem}[section]
\newtheorem*{theoA*}{Theorem 1}
\newtheorem*{theoB*}{Theorem 2}
\newtheorem {lem}[theo]{Lemma}
\newtheorem*{lem*}{Lemma}
\newtheorem {prop}[theo]{Proposition}
\newtheorem*{prop*}{Proposition}
\newtheorem {cor}[theo]{Corollary}
\newtheorem*{cor*}{Corollary}
\newtheorem*{conjecture*}{Conjecture}
\theoremstyle{definition}
\newtheorem {defi}[theo]{Definition}
\newtheorem*{defi*}{Definition}
\newtheorem*{nota*}{Notation}
\theoremstyle{remark}
\newtheorem {rem}[theo]{Remark}
\newtheorem*{rem*}{Remark}
\newtheorem*{warning*}{Warning}
\newtheorem*{warnings*}{Warnings}
\newtheorem {convention}[theo]{Convention}
\newtheorem*{convention*}{Convention}
\newtheorem {exemple}[theo]{Example}
\newtheorem*{exemple*}{Example}
\newtheorem*{exemples*}{Examples}
\newtheorem*{question*}{Question}
\newtheorem*{questions*}{Questions}
\newtheorem*{fact*}{Fact}
\newtheorem*{acknowledgments}{Acknowledgments}
\newcommand{\1}{\mathbf{1}}
\newcommand*{\Searrow}{\rotatebox[origin=c]{-45}{\(\Longrightarrow\)}}
\newcommand{\nbpt}{18}
\newcommand{\figtotext}[3]{\begin{array}{c}\includegraphics[scale=0.7]{#3}\end{array}}
\newcommand{\Ov}{\figtotext{\nbpt}{\nbpt}{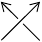}}
\newcommand{\und}{\figtotext{\nbpt}{\nbpt}{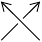}}
\newcommand{\smoo}{\figtotext{\nbpt}{\nbpt}{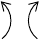}}
\newcommand{\ps}{p\#}
\newcommand{\bps}{\textrm{band-}p\#}
\newcommand{\shp}{\#}
\title[Links up to clasp-pass moves]{The classification of links up to clasp-pass moves}
\author[J.B. Meilhan]{Jean-Baptiste Meilhan} 
\address{Univ. Grenoble Alpes, CNRS, Institut Fourier, F-38000 Grenoble, France}
	 \email{jean-baptiste.meilhan@univ-grenoble-alpes.fr}
\author[A. Yasuhara]{Akira Yasuhara} 
\address{Faculty of Commerce, Waseda University, 1-6-1 Nishi-Waseda,
               Shinjuku-ku, Tokyo 169-8050, Japan}
	 \email{yasuhara@waseda.jp}
\subjclass[2020]{57K10}
\keywords{clasp-pass move, pass move, $\shp$-move, bottom tangles, claspers, concordance, Milnor invariants}
\begin{document}
\maketitle

\begin{abstract}
We give a complete classification of links up to clasp-pass moves, which coincides with Habiro's $C_3$-equivalence.
We also classify links up to band-pass and band-$\shp$ moves, which are 
versions of the usual pass- and $\shp$-move, respectively,  where each pair of parallel strands belong to the same component. 
This recovers and generalizes widely a number of partial results in the study of these local moves. 
The proofs make use of clasper theory.
\end{abstract}

\section{Introduction}
The classification of knots and links up to ambient isotopy, by means of computable and effective invariants,  remains to date an outstanding open question in low dimensional topology. 
A natural approach to this challenging problem is to seek intermediate classification results, involving coarser topological equivalence relations than isotopy. 
This was for example the idea underlying the early works of Milnor \cite{M1,M2}, who launched in the mid-fifties the study of links up to \emph{link-homotopy}. 
Here, link-homotopy is the equivalence relation generated by isotopies and the \emph{self-crossing change}, which is the local move shown on the left-hand side of Figure \ref{fig:clasp}, where both represented strands belong to the same link component.
The classification of links up to link-homotopy was only obtained in 1990 by Habegger and Lin \cite{HL}, using a variant of the so-called \emph{Milnor invariants} developed in \cite{M1,M2}.

Since the work of Milnor, a number of interesting intermediate classification problems arose in the study of knots and links, based on other local moves. 
The {\em delta move}, defined independently by Matveev and Murakami-Nakanishi in the late eighties, is the local move represented in the center of Figure~\ref{fig:clasp}. It was shown to provide a combinatorial interpretation of the link-homology relation for links, and to be classified by the linking numbers \cite{Matveev,M-N}.
\begin{figure}[!h]
\includegraphics[scale=0.82]{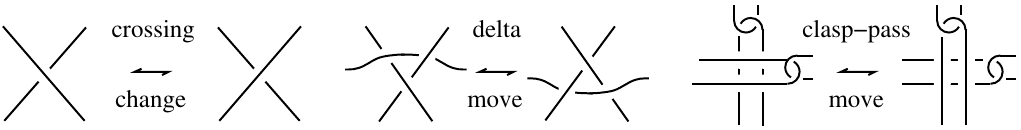} 
  \caption{A crossing change, a delta move and a clasp-pass move}  \label{fig:clasp}
\end{figure}
Habiro subsequently introduced the local move shown on the right-hand side of Figure~\ref{fig:clasp}, called \emph{clasp-pass move} \cite{Habiro0}. A clasp-pass move is realized by two delta moves.  
More generally Habiro showed that the three local moves of Figure~\ref{fig:clasp} 
arise naturally as the first three of an infinite family of local moves, called {\em $C_k$-moves} $(k\in \mathbb{N})$, 
which give finer and finer equivalence relations as $k$ increases, 
and which are deeply related to the theory of finite type invariants \cite{Habiro}.

In \cite{Habiro}, Habiro showed that two knots are equivalent up to clasp-pass moves if and only if  
their {\em Casson knot invariants} $a_2$ coincide; here, the Casson knot invariant is the second coefficient of the Alexander-Conway polynomial. For links, however, the only known clasp-pass classification results are for links with up to $3$ components and for algebraically split links, \emph{i.e.} links with vanishing linking numbers, due to Taniyama and the second author in 2002 \cite{T-Y}. 
The first main result of this paper, is the clasp-pass classification of link with any number of components (Theorem 1, stated below).
\medskip

The clasp-pass move is also intimately related to the well-known pass-moves and $\shp$-moves. 
The {\em pass-move} was defined in 1974 by Kauffman, as the local move represented on the left-hand side of Figure \ref{pass} \cite{Kauffman0}.\footnote{In \cite{Kauffman0}, Kauffman uses the terminology \lq band-pass operation\rq\, for this local move.} Kauffman showed that 
two knots are equivalent up to pass-moves if and only if their \emph{Arf invariants} coincide \cite{Kauffman}. 
\begin{figure}[!h]
\includegraphics[scale=0.85]{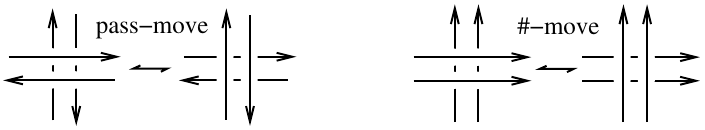} 
  \caption{A pass-move and a $\shp$-move}  \label{pass}
\end{figure}
The {\em $\shp$-move} is the local move illustrated on the right-hand side of Figure \ref{pass}. 
It was introduced in 1985 by Murakami, who showed that all knots are equivalent to the trivial knot up to $\shp$-moves \cite{Murakami}. 
Murakami and Nakanishi gave the classifications of links up to pass-moves, and also up to $\shp$-moves \cite{M-N}. 

It is easily seen that a clasp-pass move is only a special case of a pass-move.
In fact, this is more precisely a special case of a \emph{band-pass move}. 
Here a band-pass move is a pass-move where each pair of parallel strands in Figure \ref{pass} are required to belong to the same link component. Likewise, we define the notion of band-$\shp$ move. Using an observation of Murakami and Nakanishi \cite[Fig.~A.7]{M-N}, we have that a band-pass move is achieved by band-$\shp$ moves, so that these three local moves, clasp-pass, band-pass and band-$\shp$ moves, yield another coherent sequence of equivalence relations on knots and links. 

Very little is known about the band-pass and band-$\shp$ equivalence, although it has been previously studied in the literature. 
Martin gave the classification of algebraically split links up to band-pass moves \cite{Martin}. 
In 1993, Saito defined an unoriented version of these local moves, which turns out to be equivalent to the band-$\shp$ move (see Remark~\ref{rem:saito}), and gave a classification of 2-component links with even linking numbers \cite{Saito}. 
Moreover, Shibuya and the second author gave  in \cite{S-Y} classifications results for links up to \emph{self-pass} and \emph{self-$\shp$} moves,\footnote{The self-pass move was first introduced by Cervantes and Fenn in \cite{C-F}, where it is called {\em elementary pass}, and the self-$\shp$ move is due to Shibuya \cite{Shibuya0}.} where as above the prefix \lq self\rq\, stands for versions of these local moves where all strands belong to the same link component; these result refine the classification of links up to link-homotopy. Shibuya showed that two concordant links are equivalent up to self-pass  moves \cite{Shibuya}; in particular, concordance implies band-pass equivalence.\\
Our second main result, Theorem~2 below, is the classification of links up to band-pass and band-$\shp$ moves. 
\medskip 
 
 In order to state our classification theorems, we need some definitions and notation. 

 Let $D^2$ be the unit disk,  and let $D^1\subset D^2$ be a fixed oriented diameter. 
Let  $p_1,...,p_{2n}$ be $2n$ fixed points, arranged in this order along the orientation of 
$D^1\times\{0\}\subset D^2\times [0,1]$. 
An {\em $n$-component bottom tangle}  $\sigma=\sigma_1\cup\cdots\cup\sigma_n$ 
is a tangle in the cylinder $D^2\times [0,1]$ consisting of $n$ arcs $\sigma_1,...,\sigma_n$ such that 
each component $\sigma_i$ runs from $p_{2i-1}$ to  $p_{2i}$ ($1\le i\le n$). 
The term \lq bottom tangle\rq\, is due to Habiro \cite{Habiro2},
although this notion previously appeared in several places in the literature, see for example \cite{Levine}; there is a natural correspondence between bottom tangles and string links, see for example \cite[\S~13]{Habiro2}, and we shall make use of this correspondence implicitly. 
   
Given an $n$-component bottom tangle $\sigma=\sigma_1\cup\cdots\cup\sigma_n$, 
we define two closure-type operations, as follows.
On the one hand, let $\widehat{\sigma}=\widehat{\sigma_1}\cup \cdots \cup \widehat{\sigma_n}$ denote the {\em closure} of $\sigma$, which is the $n$-component link obtained by identifying the starting and ending point of each component as shown in  Figure~\ref{closure}. 
On the other hand, for any two indices $i,j$ such that $1\le i<j\le n$, let $p_{ij}(\sigma)$ denote the {\em plat closure} of the $i$th and $j$th components of $\sigma$, as illustrated in Figure~\ref{closure}; note that $p_{ij}(\sigma)$  is a knot, with orientation induced by that of the $i$th component of $\sigma$. 
 \begin{figure}[!h]
\includegraphics[scale=1]{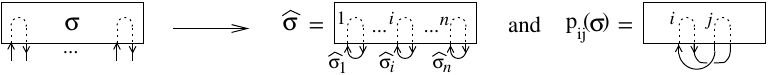}
  \caption{Two closure-type operations on $n$-component bottom tangles}\label{closure}
\end{figure}
Using these closure operations, we define 
$a_2(\sigma;i):=a_2(\widehat{\sigma_i})$ and $a_2(\sigma;ij):=a_2(p_{ij}(\sigma))$ 
where as above $a_2$ is the Casson knot invariant.
In what follows, we shall simply denote these invariants by $a_2(i)$ and $a_2(ij)$. 

Our other main ingredients are {\em Milnor invariants} of $\sigma$. These are integers $\mu_\sigma(I)$ indexed by sequences $I$ of (possibly repeated) elements of $\{1,...,n\}$. In particular, Milnor invariant $\mu(ij)$ is nothing but the linking number of the $i$th and $j$th component ($i\neq j$). See Section \ref{sec:milnor} for a review of the definition. 
\medskip

The classification of links up to clasp-pass moves reads as follows. 
\begin{theoA*}\label{main-theoremA}
Let $L$ and $L'$ be $n$-component links, and let $\sigma$ and $\sigma'$ be 
bottom tangles whose closures $\widehat{\sigma}$ and $\widehat{\sigma'}$ 
are equivalent to $L$ and $L'$ respectively. 
The links $L$ and $L'$ are clasp-pass equivalent if and only if $\sigma$ and $\sigma'$ 
share all invariants $a_2(i)~(i\in\{1,...,n\})$ and $\mu(ij)~(1\le i<j\le n)$ 
and the following system of linear equations has a solution over $\mathbb{Z}$:  
\[
\left\{
\begin{array}{l}
a_2(\sigma';ij)-a_2(\sigma;ij)\equiv \mu_{\sigma}(ij)(x_{ij}+x_{ji})\pmod2 \qquad\qquad\qquad \,\,\,\, (1\leq i<j\leq n)\\
\mu_{\sigma'}(ijk)-\mu_{\sigma}(ijk)=\mu_{\sigma}(ij)(x_{ki}-x_{kj})+\mu_{\sigma}(jk)(x_{ij}-x_{ik})
+\mu_{\sigma}(ki)(x_{jk}-x_{ji}) \\
\qquad \qquad\qquad\qquad\qquad\qquad\qquad\qquad\qquad\qquad\qquad\qquad\qquad\qquad (1\leq i<j<k\leq n)
\end{array}
\right.
\]
\end{theoA*}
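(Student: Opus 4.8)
The plan is to pass from links to bottom tangles and reduce everything to a computation in Habiro's clasper calculus, handling the two implications separately.

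For necessity, note first that $a_2(i)$ and $\mu(ij)$ are finite-type invariants of degree at most $2$ and hence invariant under $C_3$-moves; as they depend only on the underlying link, they must coincide for $\sigma$ and $\sigma'$ whenever $\widehat\sigma$ and $\widehat{\sigma'}$ are clasp-pass equivalent, giving the first condition. The quantities $a_2(ij)$ and $\mu_\sigma(ijk)$ are, by contrast, invariants of the bottom tangle and not of its closure, so the remaining task is to record how they vary when one replaces $\sigma$ by another bottom tangle with the same closure. I would analyze the elementary such \emph{basing changes} --- sliding the endpoints of one component around another --- and check diagrammatically that wrapping the $i$th strand $x_{ij}$ times along the $j$th component alters $\mu_\sigma(ijk)$ by precisely the linking-number-weighted combination on the right-hand side of the second equation (this is the classical indeterminacy of the triple linking number) and alters $a_2(ij)$ by $\mu_\sigma(ij)(x_{ij}+x_{ji})$ modulo $2$. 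Combining these shifts with the $C_3$-invariance of the degree-$\le 2$ invariants produces, for clasp-pass equivalent links, an integral solution of the stated system.

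For sufficiency the crucial step is the $C_3$-classification of bottom tangles themselves. Using clasper theory I would establish that $a_2(i)$, $\mu(ij)$, $a_2(ij)$ and $\mu(ijk)$ form a complete set of invariants for $n$-component bottom tangles up to $C_3$-equivalence. Since the hypotheses force the linking numbers to agree, the delta-move classification of Murakami--Nakanishi shows $\widehat\sigma$ and $\widehat{\sigma'}$ are $C_2$-equivalent, so $\sigma'$ is obtained from $\sigma$, up to $C_3$-equivalence, by surgery on a union of degree-$2$ tree claspers. Within this fixed $C_2$-class the $C_3$-classes form an abelian group generated by such Y-clasper surgeries, and I would make explicit the homomorphism recording their effect on $(a_2(i),\mu(ij),a_2(ij),\mu(ijk))$: Y-claspers with three distinct leaf-colours realise the triple Milnor invariants, while those with a repeated colour, together with local knotting, realise the plat-closure Casson invariants. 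Feeding in the basing changes above, it then remains to check that the combined group of realisable changes is cut out exactly by the linear system.

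I expect this final matching to be the main obstacle. Two points require the clasper calculus to be pushed hardest. First, one must prove a formula, via an STU-type computation, expressing $a_2(ij)$ in terms of $a_2(i)$, $a_2(j)$ and the Milnor data up to a single $\mathbb{Z}/2$-residue; this is what forces the first equation to be a congruence modulo $2$ rather than an integral identity, the residue being governed by the parity of $\mu(ij)$. Second, one must verify that no $C_3$-invariant of bottom tangles survives beyond the four families listed --- so that the completeness claim genuinely holds --- which I would do by reducing an arbitrary degree-$2$ clasper on the trivial bottom tangle to a normal form using the AS, IHX and ``clasp-pass equals two delta moves'' relations, and reading off that the only surviving contributions are the ones already accounted for.
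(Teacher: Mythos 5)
Your overall architecture matches the paper's: classify bottom tangles up to $C_3$-equivalence by $a_2(i)$, $\mu(ij)$, $a_2(ij)$, $\mu(ijk)$ (this is Theorem~\ref{key-theorem1}, quoted from \cite{Meilhan}), then control the indeterminacy of passing from a link to a bottom tangle representative and read off the linear system. However, there are two genuine gaps.

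First, you misidentify the source of the $\bmod\ 2$ in the first equation. You propose an ``STU-type'' relation expressing $a_2(ij)$ in terms of $a_2(i)$, $a_2(j)$ and Milnor data up to a $\mathbb{Z}/2$-residue. No such relation holds: $a_2(ij)$ is an \emph{independent} member of the complete set of $C_3$-invariants of bottom tangles, not determined by the others. In the paper the congruence arises for a different reason: besides the ``wrapping'' moves $\tau_{ij}$ (which shift $a_2(ij)$ by $\mu(ij)$), there is a second family of basing moves $\nu_{ij}$ --- doubled $C_2$-trees produced when a closure band crosses a strand of the link and the resulting error terms are normalized (Figures~\ref{deform3} and \ref{deform3bis}) --- and each $\nu_{ij}^{\pm1}$ shifts $a_2(ij)$ by exactly $\pm 2$ while preserving $a_2(i)$, $\mu(ij)$, $\mu(ijk)$ (Proposition~\ref{trace}~(2)). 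Without the $\nu_{ij}$ your necessity argument would yield the integral identity $a_2(\sigma';ij)-a_2(\sigma;ij)=\mu(ij)(x_{ij}+x_{ji})$, which is false, and your sufficiency argument could not realize an arbitrary even change of $a_2(ij)$, so the ``if'' direction would fail whenever $\mu(ij)$ is even and $a_2(\sigma';ij)-a_2(\sigma;ij)$ is a nonzero even number not of the form $\mu(ij)(x_{ij}+x_{ji})$.

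Second, you assume without proof that sliding endpoints around components exhausts the ways two bottom tangles can have the same closure up to clasp-pass moves. This Markov-type statement (Theorem~\ref{Markov}, via Proposition~\ref{keyprop}) is the technical heart of the paper: one writes $\sigma'$ as a band sum of $\widehat{\sigma}$ with the trivial bottom tangle along knotted bands, realizes the unknotting of the bands by surgeries along doubled $C_1$-trees and full twists, and then must show --- by a careful clasper calculus controlling all the doubled $C_2$-trees generated along the way (Lemmas~\ref{basicmove2bis}, \ref{basicmove2}, \ref{lem:nunu}, \ref{lem:commute}) --- that every error term is, up to $C_3$-equivalence, either deletable or a copy of some $\nu_{kl}$. ``Check diagrammatically'' does not discharge this; it is precisely in this normalization that the $\nu_{ij}$ you are missing appear, and without it neither implication of the theorem is established.
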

In the special case where $L$ and $L'$ are algebraically split links, Theorem~1 implies that $L$ and $L'$ are clasp-pass equivalent if and only if $\sigma$ and $\sigma'$ 
share all invariants $a_2(i)$, $a_2(ij) \pmod2$ and $\mu(ijk)$ ($i<j<k$), which recovers \cite[Thm.~1.4]{T-Y}. 

\begin{theoB*}\label{main-theoremB}
Let $L$ and $L'$ be $n$-component links, and let $\sigma$ and $\sigma'$ be 
bottom tangles whose closures $\widehat{\sigma}$ and $\widehat{\sigma'}$ 
are equivalent to $L$ and $L'$ respectively. 
\begin{itemize} 
\item[(1)] The links $L$ and $L'$ are band-pass equivalent if and only if $\sigma$ and $\sigma'$ 
share all invariants $a_2(i)\pmod2~(i\in\{1,...,n\})$ and $\mu(ij)$ $(1\le i<j\le n)$ and
the following system has a solution over $\mathbb{Z}$. 
\[
\left\{
\begin{array}{l}
\mu_{\sigma'}(jiij)-\mu_{\sigma}(jiij)\equiv \mu_{\sigma}(ij)(x_{ij}+x_{ji}) \pmod2\ \qquad\qquad\qquad \,\,\,\, (1\leq i<j\leq n)\\
\mu_{\sigma'}(ijk)-\mu_{\sigma}(ijk)=\mu_{\sigma}(ij)(x_{ki}-x_{kj})+\mu_{\sigma}(jk)(x_{ij}-x_{ik})
+\mu_{\sigma}(ki)(x_{jk}-x_{ji}) \\
 \qquad \qquad\qquad\qquad\qquad\qquad\qquad\qquad\qquad\qquad\qquad\qquad\qquad\qquad (1\leq i<j<k\leq n)
\end{array}
\right.
\]
\item[(2)] The links $L$ and $L'$ are band-$\shp$ equivalent if and only if they share all invariants $\mu(ij)\pmod4$ ($1\le i<j\le n$) and 
the following system has a solution over $\mathbb{Z}$. 
\[
\left\{
\begin{array}{l}
\displaystyle4\left(\mu_{\sigma'}(jiij)-\mu_{\sigma}(jiij)\right)+\mu_{\sigma'}(ij)-\mu_{\sigma}(ij)
\equiv 4\mu_{\sigma}(ij)(x_{ij}+\!x_{ji}) \pmod8 \\
 \qquad \qquad\qquad\qquad\qquad\qquad\qquad\qquad\qquad\qquad\qquad\qquad\qquad\qquad  (1\leq i<j \leq n)\\
\mu_{\sigma'}(ijk)-\mu_{\sigma}(ijk) 
\equiv \mu_{\sigma}(ij)(x_{ki}-x_{kj})+\mu_{\sigma}(jk)(x_{ij}-x_{ik})
+\mu_{\sigma}(ki)(x_{jk}-x_{ji})  \pmod2 \\
 \qquad \qquad\qquad\qquad\qquad\qquad\qquad\qquad\qquad\qquad\qquad\qquad\qquad\qquad (1\leq i<j<k\leq n)
\end{array}
\right.
\]
\end{itemize}
\end{theoB*}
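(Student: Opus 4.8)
The plan is to derive Theorem~2 from Theorem~1 by making precise how the two coarser relations, band-pass and band-$\shp$ equivalence, differ from clasp-pass equivalence. Since a clasp-pass move is a special band-pass move and, by the observation of Murakami--Nakanishi, a band-pass move is realized by band-$\shp$ moves, clasp-pass equivalence refines band-pass equivalence, which in turn refines band-$\shp$ equivalence; thus there are natural surjections from clasp-pass classes onto band-pass classes and onto band-$\shp$ classes. The heart of the argument is to identify, modulo clasp-pass (resp. band-pass) equivalence, a short list of \emph{elementary} moves that generate precisely these extra identifications, and to compute their effect on the invariants $a_2(i)$, $\mu(ij)$, $a_2(ij)$, $\mu(ijk)$ and $\mu(jiij)$ by means of clasper calculus.

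For the necessity (\lq only if\rq) direction I would insert a single band-pass (resp. band-$\shp$) move into a bottom tangle, express the local modification as surgery along a small clasper, and read off the variation of each invariant using the standard formulas for how $a_2$ and the $\mu$-invariants change under surgery along degree-$2$ and degree-$3$ claspers. A band-pass move preserves every linking number $\mu(ij)$ and, restricted to a single component, reduces to Kauffman's pass-move, so it preserves each $a_2(i)$ only modulo $2$; the induced variations of $\mu(ijk)$ and of $\mu(jiij)\pmod 2$ are exactly the right-hand sides of the system in~(1), the integers $x_{ij}$ recording how many times the $i$th band is passed across the $j$th component. For band-$\shp$ moves one argues in the same way, except that now every knot is $\shp$-trivial (Murakami), so the invariants $a_2(i)$ disappear, and a band-$\shp$ move may alter $\mu(ij)$ by a multiple of $4$; this produces the modulo~$4$ and modulo~$8$ constraints of~(2).

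For the sufficiency (\lq if\rq) direction, which is the substance of the proof, the strategy is to use the hypotheses to bring $\sigma$, by band-pass (resp. band-$\shp$) moves, to a tangle having the same \emph{clasp-pass} invariants as $\sigma'$, and then to invoke Theorem~1. Concretely, assuming the invariants match and the displayed system has an integral solution $(x_{ij})$, I would first apply the elementary moves dictated by this solution to match the values of $\mu(ijk)$ and of the degree-$2$ sublink invariant; a relation expressing $a_2(ij)$ modulo $2$ in terms of $\mu(jiij)$, $a_2(i)$, $a_2(j)$ and $\mu(ij)$ lets the band-pass system be rewritten as the clasp-pass system of Theorem~1. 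One then adjusts each $a_2(i)$ by band-pass moves, which change it by an arbitrary even integer, so as to match $a_2(\sigma';i)$ exactly. At this point the modified tangle and $\sigma'$ share all the invariants of Theorem~1 and their clasp-pass system is solvable, whence they are clasp-pass, and a fortiori band-pass, equivalent. The band-$\shp$ case is then obtained by the same reduction, additionally using band-$\shp$ moves to change each $\mu(ij)$ by $4$ and to kill the invariants $a_2(i)$, thereby reducing statement~(2) to statement~(1).

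The main obstacle I anticipate is the bookkeeping in the clasper calculus that turns a single geometric band move into its precise effect on $\mu(ijk)$ and $\mu(jiij)$, together with the verification that the changes so realizable are \emph{exactly} those parametrized by the linear systems. In particular, proving sufficiency requires constructing, for each basic variation, an explicit elementary band move realizing it with no unwanted side effect on the remaining invariants, and checking surjectivity onto the solution set of the system rather than mere invariance. Establishing the modulo~$2$ relation between $a_2(ij)$ and $\mu(jiij)$, and its modulo~$8$ refinement used in~(2), is a further delicate computation on which the translation between the two systems rests.
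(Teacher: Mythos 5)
There is a genuine gap, and it is conceptual rather than computational: you have misidentified what the variables $x_{ij}$ in the linear systems parametrize. They do not record ``how many times the $i$th band is passed across the $j$th component'' during band-pass or band-$\shp$ moves; they parametrize the indeterminacy in presenting the \emph{link} $L$ as the closure of a \emph{bottom tangle} $\sigma$. The quantities $\mu_\sigma(ijk)$, $\mu_\sigma(jiij)$ and $a_2(\sigma;ij)$ are invariants of the bottom tangle, not of its closure, and two bottom tangles with equivalent closures differ (up to the relevant equivalence) by stacking copies of the partial-conjugation string links $\tau_{ij}^{\pm1}$; it is surgery along these $\tau_{ij}$ that produces the variations $\mu_{\sigma}(ij)(x_{kj}-x_{ki})+\cdots$ appearing on the right-hand sides. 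Your ``only if'' computation would in fact yield zero variation: band-pass moves applied to a bottom tangle \emph{preserve} $\mu(ijk)$ and $\mu(jiij)\bmod 2$ exactly (this is precisely the content of the tangle-level classification, Theorem~\ref{key-theorem2}), so inserting a single band-pass move and reading off clasper variation formulas cannot generate the system. What is missing is a Markov-type theorem (Theorem~\ref{Markov} in the paper): $L$ and $L'$ are band-pass (resp.\ band-$\shp$) equivalent if and only if $\sigma'$ is band-pass (resp.\ band-$\shp$) equivalent to $\sigma_\omega$ for some integer matrix $\omega$, together with the computation (Proposition~\ref{trace}) of how the invariants change under $\sigma\mapsto\sigma_\omega$. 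Without this link-to-tangle reduction, neither implication of Theorem~2 can be established.

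A secondary issue is your plan to deduce Theorem~2 from Theorem~1 by listing ``elementary moves'' generating the extra identifications. Band-pass equivalence of tangles coincides with $C_3$-\emph{concordance} (Lemma~\ref{band-pass$=C3+c$}), and concordance is not generated by a finite list of local moves in any evident way; the paper instead imports the known classification of string links up to $C_3$-concordance and, for band-$\shp$, proves a separate tangle classification (Theorem~\ref{key-theorem3}) via a delicate clasper argument involving $8$-moves and the invariant $\varphi(ij)=4\mu(jiij)+\mu(ij)\bmod 8$. Your proposed mod-$2$ relation between $a_2(ij)$ and $\mu(jiij)$ is a reasonable heuristic for why the two systems in Theorems~1 and~2(1) resemble each other, but it does not substitute for these tangle-level classifications, which are the actual inputs alongside the Markov-type theorem.
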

As a direct corollary of Theorem~2, we recover the band-pass  classification of algebraically split links  given in \cite[Thm.~4.1]{Martin}: the classification is given by the invariants $a_2(i)\pmod2$, $\mu(jiij)\pmod2$ and $\mu(ijk)$ 
($ i<j<k$).
\\
We also recover the band-$\shp$ classification of $2$-component {\em $\mathbb{Z}_2$-algebraically split links}, \emph{i.e.} links with
pairwise even linking numbers, due to Saito \cite{Saito}. In fact, we have the following generalization of \cite[Thm.~3.2]{Saito} to any number of components.
For all $i,j~(i<j)$, set 
$$\varphi_{\sigma}(ij)\equiv 4\mu_{\sigma}(jiij)+\mu_{\sigma}(ij) \mod 8.$$
\begin{cor}
Let $L$ and $L'$ be $n$-component $\mathbb{Z}_2$-algebraically split links, and let $\sigma$ and $\sigma'$ be 
bottom tangles whose closures $\widehat{\sigma}$ and $\widehat{\sigma'}$ 
are equivalent to $L$ and $L'$ respectively. 
Then $L$ and $L'$ are band-$\shp$ equivalent if and only if $\sigma$ and $\sigma'$ share all invariants $\mu(ij)\pmod4$, 
$\varphi(ij)\pmod8$ and $\mu(ijk)\pmod2$ 
 $(i<j<k)$.
\end{cor}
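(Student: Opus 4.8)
The plan is to deduce the Corollary directly from Theorem~2(2) by specializing to the $\mathbb{Z}_2$-algebraically split case. Recall that $L$ and $L'$ being $\mathbb{Z}_2$-algebraically split means precisely that every linking number $\mu_\sigma(ij)$ ($i<j$) is even, and note that the condition \lq\lq$\sigma$ and $\sigma'$ share all $\mu(ij)\pmod4$\rq\rq\ appearing in Theorem~2(2) is retained verbatim in the Corollary. So the only thing to check is that, under the evenness hypothesis, the system of congruences in Theorem~2(2) has a solution over $\mathbb{Z}$ if and only if the two residue conditions on $\varphi(ij)\pmod8$ and $\mu(ijk)\pmod2$ hold.

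First I would examine the degree-$2$ congruences (the first line of the system). Writing $\mu_\sigma(ij)=2m_{ij}$ for integers $m_{ij}$, the variable term on the right-hand side becomes $4\mu_\sigma(ij)(x_{ij}+x_{ji})=8\,m_{ij}(x_{ij}+x_{ji})\equiv 0\pmod8$. Hence this congruence loses all dependence on the unknowns $x_{ij}$ and reduces to
\[
4\bigl(\mu_{\sigma'}(jiij)-\mu_\sigma(jiij)\bigr)+\mu_{\sigma'}(ij)-\mu_\sigma(ij)\equiv 0\pmod8 ,
\]
which, by the definition $\varphi_\sigma(ij)\equiv 4\mu_\sigma(jiij)+\mu_\sigma(ij)\pmod8$, is exactly the statement that $\sigma$ and $\sigma'$ share $\varphi(ij)\pmod8$.

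Next I would treat the degree-$3$ congruences (the second line). Here each coefficient $\mu_\sigma(ij)$, $\mu_\sigma(jk)$, $\mu_\sigma(ki)$ on the right-hand side is even by hypothesis, so the entire right-hand side is $\equiv 0\pmod2$ for every choice of the $x$'s. Thus this congruence too becomes independent of the unknowns and reduces to $\mu_{\sigma'}(ijk)-\mu_\sigma(ijk)\equiv 0\pmod2$, i.e.\ to the condition that $\sigma$ and $\sigma'$ share $\mu(ijk)\pmod2$.

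Putting these together, once the linking numbers are even the system of Theorem~2(2) contains no genuine unknowns: every congruence is either automatically satisfied or is equivalent to one of the stated residue conditions, so a solution over $\mathbb{Z}$ exists (and indeed any integer assignment is a solution) if and only if $\varphi(ij)\pmod8$ and $\mu(ijk)\pmod2$ agree for $\sigma$ and $\sigma'$. Combining this with the surviving $\mu(ij)\pmod4$ condition yields the Corollary. Since the argument is a direct substitution, I do not anticipate a serious obstacle; the only point requiring care is the bookkeeping of moduli, in particular verifying that evenness of $\mu_\sigma(ij)$ upgrades the factor $4\mu_\sigma(ij)$ to a multiple of $8$ in the first congruence, which is precisely what makes the variables disappear.
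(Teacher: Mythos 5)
Your proposal is correct and is precisely the argument the paper intends: the Corollary is stated as a direct consequence of Theorem~2(2), and specializing to even linking numbers makes the right-hand sides of both congruences vanish modulo $8$ and modulo $2$ respectively, reducing the solvability of the system to the stated conditions on $\varphi(ij)\pmod8$ and $\mu(ijk)\pmod2$. Your bookkeeping of the moduli is accurate, so there is nothing to add.
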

\medskip

Our strategy for the proofs of the classification Theorems~1 and 2 
 is roughly inspired from Habegger and Lin's  approach to the link-homotopy classification of links. 
First, we give classification results for bottom tangles (or equivalently, string links) 
up to clasp-pass, band-pass moves and band-$\shp$ moves (Section \ref{sec:classtangles}),  
by providing a complete set of invariants for each equivalence relation. 
Second, we give  in Section \ref{sec:Markov} a \lq Markov type' theorem (Theorem~\ref{Markov}) for each of these relations. 
Finally, we combine these results, by analyzing the behavior of the classifying invariants of bottom tangles, under the \lq Markov type' moves (Proposition~\ref{trace}).

\begin{rem} 
The following diagram summarizes the various known implications among all equivalence relations on knot and links, discussed in this introduction.   
\[\hspace{-1.5ex}\begin{array}{ccccccccc}
\begin{array}{c}\textrm{\small ambient} \\ \textrm{\small isotopy}\end{array}\hspace{-0.75ex}&\hspace{-0.75ex}\Longrightarrow\hspace{-0.75ex}&\hspace{-0.75ex} \text{\small concordance}\hspace{-0.75ex}&\hspace{-0.75ex}\stackrel{\strut \textrm{\cite{Shibuya}}}{\Longrightarrow} \hspace{-0.75ex}&\hspace{-0.75ex}\begin{array}{c}\textrm{\small \,\,\,self-pass\,}^{\textrm{\cite{S-Y}}}\\ \textrm{\small{equivalence}}\end{array}\hspace{-0.75ex}&\hspace{-0.75ex}\stackrel{\strut \textrm{\cite{M-N}}}{\Longrightarrow} &\begin{array}{c}\textrm{\small \,\,self-$\shp$\,}^{\textrm{\cite{S-Y}}}\\ \textrm{\small{equivalence}}\end{array}\hspace{-0.75ex}&\hspace{-0.75ex}\Longrightarrow \hspace{-0.95ex}&\hspace{-0.95ex}\begin{array}{c}\textrm{\small \,\,link-\,}^{\textrm{\cite{HL}}} \\\textrm{\small{homotopy}}\end{array}\\
\Downarrow \hspace{-0.75ex}&\hspace{-0.75ex}\Searrow\hspace{-0.75ex}&\hspace{-0.75ex}& &\Downarrow& & \Downarrow & & \\
\begin{array}{c}\textrm{\small delta\,}^{\textrm{\cite{M-N}}} \\\textrm{\small{equivalence}}\end{array} \hspace{-0.75ex}&\hspace{-0.75ex}\stackrel{\strut \textrm{\cite{Habiro}}}{\Longleftarrow}\hspace{-0.75ex}&\hspace{-0.75ex} \begin{array}{c}\textrm{\small clasp-pass\,}^{\textrm{Th.1}} \\\textrm{\small{equivalence}}\end{array}\hspace{-0.75ex}&\hspace{-0.75ex}\Longrightarrow \hspace{-0.75ex}&\hspace{-0.75ex}\begin{array}{c}\textrm{\small band-pass\,}^{\textrm{Th.2}} \\\textrm{\small{equivalence}}\end{array}\hspace{-0.75ex}&\hspace{-0.75ex}\stackrel{\strut \textrm{\cite{M-N}}}{\Longrightarrow} \hspace{-0.75ex}&\hspace{-0.75ex}\begin{array}{c}\textrm{\small band-$\shp$\,}^{\textrm{Th.2}} \\\textrm{\small{equivalence}}\end{array}\hspace{-0.75ex}&  & 
\end{array}\]
\noindent There, $X\Rightarrow Y$ means that if two links are equivalent up to $X$, then they also are equivalent up to $Y$. The known classifications results are also shown as exponents of the corresponding equivalence relations. 
\end{rem}

\bigskip

\begin{convention}\label{conv}
Given two subsets $I,J$ of $\{1,\ldots, n\}$, we denote by  $\delta_{I,J}$ the Kronecker symbol, defined by $\delta_{I,J}=1$ if  $I=J$ and $\delta_{I,J}=0$ otherwise. If $I=\{i\}$ and $J=\{j\}$ for some indices $i,j$, we simply denote $\delta_{I,J}=\delta_{i,j}$. 
\end{convention}

\begin{acknowledgments}
The authors thank Emmanuel Graff and Kodai Wada for useful comments on a draft version of this paper. 
The authors are also indebted to the referee for his/her careful reading of the manuscript and insightful comments. 
\end{acknowledgments}

\section{Clasper calculus up to clasp-pass moves}

\subsection{Claspers and $C_k$-moves}

Let $\sigma$ be a bottom tangle in $D^2\times [0,1]$. 
We first recall from \cite{Habiro} the main definition of this section; we stress that Habiro gave a more general definition of claspers in \cite{Habiro}, but we only need the following restricted notion in this paper. \begin{defi} 
A {\em tree clasper} for $\sigma$ is a disk $T$ embedded in $D^2\times [0,1]$ and satisfying the following three conditions:
\begin{enumerate}
\item[(i)] $T$ decomposes into disks and bands, called {\em edges}, each of which 
connects two distinct disks;
\item[(ii)] the disks have either 1 or 3 incident edges, and are called {\em leaves} or {\em nodes} respectively;
\item[(iii)] $T$ intersects $\sigma$ transversely, and the intersection is contained in the union of 
the interior of the leaves. 
\end{enumerate}
A leaf of $T$ is {\em simple} if it intersects $\sigma$ at a single point, and the tree clasper $T$ is \emph{simple} if all of its leaves are. 
The degree of $T$ is defined as the number of leaves minus 1. A degree $k$ tree clasper is called a \emph{$C_k$-tree}. 
\end{defi} 
In this paper, we follow the drawing convention for claspers in \cite[Figure 7]{Habiro}, except for the following: a $\oplus$ (resp. $\ominus$) on some edge denotes a positive (resp. negative) half-twist.\footnote{This graphical notation replaces the circled $s$ (resp. $s^{-1}$) used in \cite{Habiro}.}
\medskip

Given a disjoint union $F$ of  tree claspers  for $\sigma$, there is a procedure to construct a 
framed link $L(F)$ in the complement of $\sigma$.  
Since the result $(D^2\times [0,1])_{L(F)}$ of surgery along $L(F)$ 
is diffeomorphic to  $D^2\times[0,1]$, we have a new tangle $\sigma_F$ such that 
$(D^2\times[0,1],\sigma_F)\cong((D^2\times[0,1])_{L(F)},\sigma)$, relative to the boundary.
\begin{defi}
 We say that $\sigma_F$ is obtained from $\sigma$ by {\em surgery along $F$}.
 In particular, surgery along a simple $C_k$-tree defines a local move on tangles as illustrated in Figure~\ref{fig:C5}, which is called {\em $C_k$-move}.
 \begin{figure}[!h]
\includegraphics[scale=0.85]{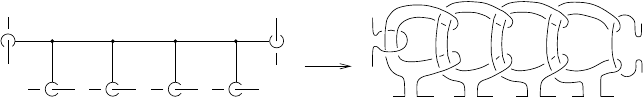} 
  \caption{Surgery along a simple $C_5$-tree}\label{fig:C5}
\end{figure}

\noindent The $C_k$-equivalence, denoted by $\stackrel{C_k}{\sim}$, is the equivalence relation on tangles generated by surgeries along $C_k$-trees and isotopies. 
\end{defi}

Habiro showed that the $C_k$-equivalence relation becomes finer as $k$ increases \cite{Habiro}. 

\begin{exemple}\label{Ex:Cn}
A $C_1$-move corresponds to a crossing change. Hence any bottom tangle is obtained from the trivial one by surgery along a union of $C_1$-trees. Moreover, the band-pass and band-$\shp$ moves of the introduction can be reformulated in the following way: 
 $$ \textrm{\includegraphics[scale=0.85]{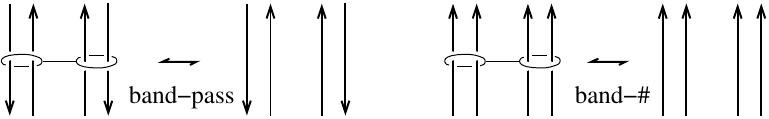}}$$ 

Figure \ref{fig:exclasp} gives several  examples of surgeries along simple $C_2$-trees.
As the right-hand side of the figure shows, a $C_2$-move corresponds to a band sum with a copy of the Borromean rings, hence is equivalent to the {\em $\Delta$-move} defined by Matveev \cite{Matveev} and Murakami and Nakanishi~\cite{M-N} independently.
 \begin{figure}[!h]
\includegraphics[scale=0.85]{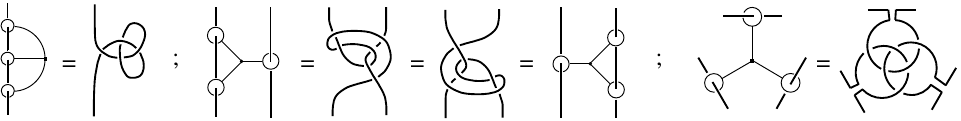} 
  \caption{Examples of surgeries along simple $C_2$-trees}\label{fig:exclasp}
\end{figure}

A $C_3$-move is equivalent to the clasp-pass move defined in the introduction: a detailed proof of this equivalence can be found in Figure 3.2 of \cite{Tsukamoto}. 
We shall make implicitly use of this latter equivalence throughout the paper.
\end{exemple}

We shall also need the following notion from \cite{MY}.
\begin{defi}
 The {\em $C_k$-concordance} ($k\ge 1$) is the equivalence relation on tangles, denoted by $\stackrel{C_k+c}{\sim}$, generated by $C_k$-equivalence and concordance.
\end{defi}

This equivalence relation turns out to coincide with the order $k$ 
{\em Whitney concordance} studied by Conant, Schneiderman and Teichner in \cite{CST}; a proof that these two notions indeed coincide can be found in \cite{CST2}.

\subsection{Calculus of claspers up to $C_3$-equivalence}

Habiro gives in \cite[Prop.~2.7]{Habiro} a list of twelve moves on claspers which yield \emph{equivalent} claspers, that is, claspers with homeomorphic surgery effect.  In what follows, we will freely use these \emph{Habiro moves} by referring to their numbering in \cite[Fig.~8~and~9]{Habiro}, and we will simply denote two equivalent claspers by \lq =\rq.

Habiro further develops this \emph{calculus of claspers} in \cite[\S 4]{Habiro}, through a collection of results analyzing elementary operations on claspers up to  $C_k$-equivalence. 
In the next result, we summarize Habiro's results, and several easy consequences, for the case of interest in this paper, namely up to $C_3$-equivalence. 

\begin{lem}[Calculus of claspers up to $C_3$-equivalence]\label{lem:calculus} 
$\textrm{ }$Let $F$ be a disjoint union of tree claspers for a bottom tangle $\sigma$. 
\begin{enumerate}
 \item[(1)]
 If $F'$ is obtained from $F$ by passing an edge of some tree clasper across an edge of another tree clasper in $F$, then $\sigma_{F}\stackrel{C_3}{\sim} \sigma_{F'}$. 
 \item[(2)]
 Suppose that $F'$ is obtained from $F$ by passing an edge of some $C_k$-tree in $F~(k\ge 1)$ 
 across a strand of $\sigma$.     
     $$\qquad \textrm{\includegraphics[scale=0.8]{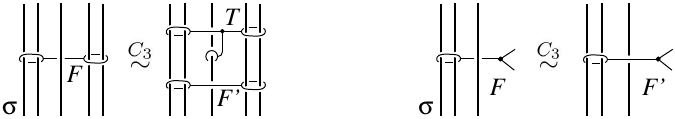}}$$ 
     \begin{itemize}
     \item if $k=1$, then $\sigma_{F}\stackrel{C_3}{\sim} \sigma_{F'\cup T}$, where $T$ is a $C_2$-tree as shown on the left-hand side of the figure above;\footnote{In this figure, parallel bold strands intersecting leaves represent any number of parallel strands (possibly a single one) of the tangle $\sigma$; we make use of the same convention in all figures of Lemma~\ref{lem:calculus}.}  
     \item if $k\ge 2$, then $\sigma_{F}\stackrel{C_3}{\sim} \sigma_{F'}$ (see right-hand side).  
     \end{itemize}
 \item[(3)]  Suppose that $F'$ is obtained from $F$ by passing a leaf of some $C_k$-tree across a leaf of another $C_{k'}$-tree ($k,k'\ge 1$). 
     $$\qquad \textrm{\includegraphics[scale=0.8]{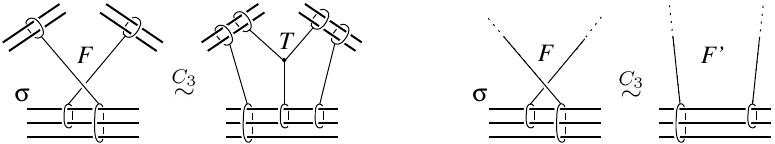}}$$ 
     \begin{itemize}
     \item if $k=k'=1$ $($left-hand side$)$, then $\sigma_{F}\stackrel{C_3}{\sim} \sigma_{F'\cup T}$, where $T$ is a $C_2$-tree as shown in the figure above;
     \item otherwise $($right-hand side$)$, we have $\sigma_{F}\stackrel{C_3}{\sim} \sigma_{F'}$.  
     \end{itemize}
 \item[(4)]      Let $f_1$ and $f_2$ be two disks obtained by splitting a leaf $f$ of some $C_k$-tree $T$ in $F$ $(k=1$ or $2)$, and let $T_1$ and $T_2$ be obtained from $T$ by replacing leaf $f$ with $f_1$ and $f_2$, respectively, see figure below. 
 Suppose that $F'$ is obtained from $F$ by replacing $T$ with $T_1\cup T_2$. 
     $$\qquad \textrm{\includegraphics[scale=0.8]{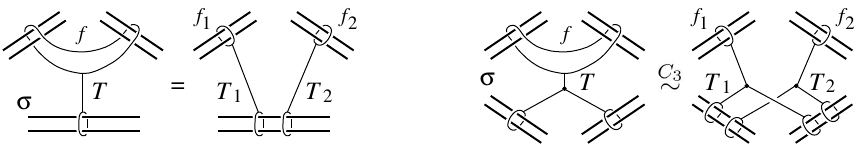}}$$ 
     \begin{itemize}
     \item if $k=1$ $($left-hand side$)$, then $\sigma_{F}=\sigma_{F'}$; 
     \item if $k=2$ $($right-hand side$)$, then $\sigma_{F}\stackrel{C_3}{\sim} \sigma_{F'}$.  
     \end{itemize}
 \item[(5)]  Suppose that $F'$ is obtained from $F$ by deleting a pair of $C_k$-trees $(k=1$ or $2)$ that only differ by a positive half-twist as shown below. 
     $$\qquad \textrm{\includegraphics[scale=0.8]{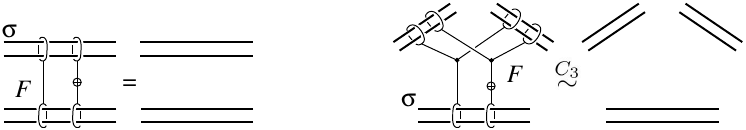}}$$ 
     \begin{itemize}
     \item if $k=1$ $($left-hand side$)$, then $\sigma_{F}=\sigma_{F'}$;
    
     \item if $k=2$ $($right-hand side$)$, then $\sigma_{F}\stackrel{C_3}{\sim} \sigma_{F'}$.  
     \end{itemize}
 \item[(6)] Suppose that $F$ and $F'$ only differ in a neighborhood of a node, in any of the ways represented below.  
  Then $\sigma_{F}\stackrel{C_3}{\sim} \sigma_{F'}$. 
           $$\qquad \textrm{\includegraphics[scale=1]{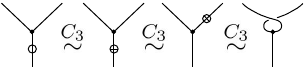}}$$ 
\end{enumerate}
\end{lem}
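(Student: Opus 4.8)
The plan is to derive each item as a specialization of Habiro's general calculus of claspers, developed in \cite[\S 4]{Habiro}, to the coarse setting of $C_3$-equivalence. The guiding principle is that each of the elementary operations on $F$ described above alters the surgery result only by surgery along additional \emph{error} tree claspers, whose degree is controlled by the degrees of the trees involved in the operation. Two mechanisms then occur: when every error clasper has degree at least $3$, the corresponding move is automatically a $C_3$-equivalence and the error may be discarded; when the operation is in fact realized by one of the twelve homeomorphism moves of \cite[Prop.~2.7]{Habiro}, we obtain the stronger equality $\sigma_F=\sigma_{F'}$. Thus each item splits according to whether we are claiming $\stackrel{C_3}{\sim}$ or $=$, and the work is to identify the relevant Habiro statement and to bound the degree of the resulting error.

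For the crossing-type operations (1)--(3), I would invoke Habiro's description of the commutator clasper obtained when one passes an edge, or a leaf, of one tree across an edge, a leaf, or a strand. Passing an edge of a degree-$k$ tree across an edge of a degree-$l$ tree produces an error tree of degree $k+l+1$; since every tree in $F$ has degree $\geq 1$, this is at least $3$, which gives (1). Passing an edge of a $C_k$-tree across a strand of $\sigma$ produces an error of degree $k+1$: for $k=1$ this is exactly the $C_2$-tree $T$ displayed in the figure, which we retain, while all further corrections have degree $\geq 3$; for $k\ge 2$ the whole error has degree $\geq 3$, giving the two cases of (2). The same bookkeeping applies in (3), where passing a leaf of a $C_1$-tree across a leaf of a $C_k$-tree produces an error of degree $1+k$, again equal to the retained $C_2$-tree $T$ when $k=1$ and of degree $\geq 3$ when $k\ge 2$.

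For (4) and (5), the case $k=1$ is special: splitting a leaf of a $C_1$-tree, and cancelling a pair of $C_1$-trees that differ by a half-twist, are each realized by a homeomorphism move of \cite[Prop.~2.7]{Habiro}, hence give genuine equalities $\sigma_F=\sigma_{F'}$; for $k=2$ the analogous operations leave a correction whose degree I would check to be $\geq 3$, so that only $C_3$-equivalence survives. Item (6) is handled in the same spirit: the indicated half-twist moves at a node of a $C_2$-tree differ by error claspers of degree $\geq 3$. I expect the main obstacle to be the precise degree bookkeeping for the crossing moves (1)--(3): one must verify that the edge-over-edge commutator in (1) genuinely has degree $k+l+1$ (so that two degree-$1$ trees already force degree $3$, rather than merely $2$), and, in (2) and (3), identify the retained degree-$2$ error with the specific $C_2$-tree $T$ drawn in the figures while confirming that every remaining correction is $C_3$-trivial. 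Locating, in each case, the exact statement in \cite[\S 4]{Habiro}, and, for the equalities, the exact move in \cite[Prop.~2.7]{Habiro}, is the essential step.
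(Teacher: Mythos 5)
Your proposal follows essentially the same route as the paper, which likewise proves the lemma by deferring to Habiro's calculus of claspers (Propositions 4.4--4.6 and the twelve moves of \cite[Prop.~2.7]{Habiro}, together with \cite[Lem.~E.7 and E.9]{Ohtsuki_book} for items (5) and (6)), and your degree bookkeeping --- $k+l+1$ for an edge--edge crossing, $k+1$ for an edge--strand crossing, $k+l$ for a leaf--leaf exchange --- is exactly what those statements assert. The only refinement worth noting is that the paper obtains item (6) not by a direct degree count but by deducing the first two node moves from the $k=2$ case of item (5) and the third from the first two.
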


\begin{proof}[Sketch of proof]
The proofs are omitted, as they use the same techniques as in \cite[\S 4]{Habiro};  
see also \cite{Goussarov_graphs,GGP} and \cite[App.~E]{Ohtsuki_book}, where similar statements appear. 
Precise references are as follows.
(1) is given in \cite[Prop.~4.6]{Habiro};
(2) is proved in \cite[Prop.~4.5]{Habiro};
(3) follows from the proof \cite[Prop.~4.4]{Habiro};
the case $k=1$ of (4) follows from Habiro's moves 6 and 7, and the case $k=2$ is implicit in \cite[pp.~26]{Habiro};
(5) is a consequence of Habiro's move 4, see \cite[Lem.~E.7]{Ohtsuki_book};
The first two equivalences of (6) follow from the case $k=2$ of (5), and the last equivalence  follows from the former two as shown in \cite[Lem.~E.9]{Ohtsuki_book}. 
\end{proof}

We conclude by noting that clasper calculus up to concordance is in practice more flexible.
This is in particular illustrated by the following result, see \cite[Lem.~4.2]{MYabelian}.
\begin{lem}\label{lem:conc}
 Let $C$ be a single $C_k$-tree for the tangle $\sigma$ $(k\ge 1)$, which contains two simple leaves intersecting the same  component as illustrated below. 
 Let $C'$ be obtained by exchanging the relative positions of these two leaves, as shown in the figure. 
 Then $\sigma_{C'}\stackrel{C_{k+1}+c}{\sim} \sigma_C$.
  $$ \textrm{\includegraphics[scale=1.75]{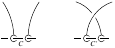} }$$
\end{lem}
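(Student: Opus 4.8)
The plan is to realise the leaf exchange as an explicit local isotopy of the tree clasper and to track the resulting modification of the surgery effect through clasper calculus, with the essential simplification coming from concordance. I would work entirely inside a small ball $B$ meeting $\sigma$ only in the arc $\alpha$ of the relevant component running between the two intersection points, and containing the two simple leaves together with short initial segments of their incident edges; outside $B$ the trees $C$ and $C'$ agree, so it suffices to compare the two local pictures.

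First I would slide the lower leaf along $\alpha$ so as to carry it past the upper one, dragging its incident edge along. This isotopy is not ambient for the pair $(\sigma, C)$, and I would decompose the obstruction into elementary crossing events: the dragged edge passing across the strand $\alpha$, the dragged edge passing across the other edge, and the two leaf disks passing through each other. Each edge-crossing event is controlled by the general form of Habiro's calculus (the statements collected in Lemma~\ref{lem:calculus} are the $C_3$-specialisations of \cite[\S 4]{Habiro}): passing an edge of a $C_k$-tree across a strand of $\sigma$, or across another edge, produces only correction claspers of degree at least $k+1$, which are absorbed by $\stackrel{C_{k+1}}{\sim}$. Thus, modulo $C_{k+1}$-equivalence, the comparison reduces to the single event in which the two leaf disks are interchanged.

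The crux is then to show that interchanging the two disks, each of which meets $\sigma$ exactly once and on the \emph{same} component, costs only a concordance (up to $\stackrel{C_{k+1}}{\sim}$). This is where the same-component hypothesis is used: because both punctures lie on $\alpha$, the two leaves cobound, in the complement of the remainder of $C$, an embedded annulus whose core is parallel to $\alpha$. I would use this annulus to build an explicit concordance in $(D^2\times[0,1])\times[0,1]$ between $\sigma_C$ and the tangle obtained after the interchange, by pushing one leaf across the annulus and capping off the resulting clasped configuration; the trace of this movie is an embedded annulus cobounding the two tangles, i.e. a concordance. Any residual clasper created in this process is again of degree $\ge k+1$ and is discarded via $\stackrel{C_{k+1}}{\sim}$. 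Combining the two steps yields $\sigma_{C'}\stackrel{C_{k+1}+c}{\sim}\sigma_C$.

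I expect the main obstacle to be this last step: writing down the concordance cleanly and verifying that the clasped difference it removes is genuinely a concordance rather than merely an isotopy obstruction, together with the degree bookkeeping that guarantees all auxiliary claspers land in degree $\ge k+1$. The hypotheses that the leaves are \emph{simple} (a single transverse intersection each) and lie on one component are precisely what make the cobounding annulus available and keep the correction terms at high enough degree; if either leaf met $\sigma$ more than once, the annulus would be obstructed and the argument would break down.
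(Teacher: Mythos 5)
First, note that the paper gives no internal proof of this lemma: it is imported verbatim from \cite[Lem.~4.2]{MYabelian}, so the only comparison possible is with the standard argument behind that reference. Your overall skeleton --- slide one leaf along $\alpha$ past the other, absorb the edge-crossing events into $C_{k+1}$-equivalence via Habiro's calculus, and isolate the leaf--leaf interchange as the one place where concordance must enter --- is the correct shape, and the reduction of the edge events is essentially fine (with the caveat that \cite[Prop.~4.6]{Habiro} is stated for \emph{disjoint} tree claspers, so a self-crossing of two edges of the single tree $C$ requires first splitting $C$ into two trees by inserting a Hopf pair of leaves along an edge; in the local picture at hand this event can in any case be avoided).

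The genuine gap is the leaf-interchange step. The assertion that the two leaves ``cobound an embedded annulus'' and that ``the trace of this movie is an embedded annulus, i.e.\ a concordance'' restates the conclusion rather than proving it. The movie necessarily contains the instant at which one leaf passes through the other; in the surgered tangles this is a crossing change between strands of the surgery effect, and the trace of a crossing change is an \emph{immersed} annulus with a double point --- precisely what a concordance must not have. Nothing in your construction removes that double point, and simplicity of the leaves does not furnish an annulus disjoint from the rest of $C$: each incident edge is attached along an arc of the corresponding leaf boundary, so any annulus cobounded by the two leaf boundaries meets both edges, and pushing one leaf across it forces the problematic passage through the other leaf's edge. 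What is actually required is to compute the correction term created by the leaf--leaf crossing: after splitting $C$ along an edge into two trees of degrees summing to $k+1$ joined by a Hopf pair, the exchange produces (by the zip construction) a degree-$(k+1)$ clasper whose leaves include the Hopf-paired ones --- roughly, a clasper obtained by joining $C$ to itself, hence with a looped or null-homotopic-leaf configuration. One then needs the separate fact that surgery along such a clasper is trivial up to $C_{k+1}$-\emph{concordance}; this is the content of \cite[Lem.~5.1 and 5.2]{MY}-type statements, and it is inside the proof of that auxiliary fact (a Whitney-move argument cancelling a pair of opposite-sign intersections) that concordance genuinely appears. Without identifying this correction term and invoking (or proving) that auxiliary lemma, the argument is incomplete.
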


\subsection{Doubled and parallel $C_k$-trees}

We now introduce two specific types of claspers, that will play central roles in the rest of this paper.

\begin{defi}\label{def:doubled}
 A tree clasper for the tangle $\sigma$ is a \emph{doubled tree clasper} if all its leaves are simple except for one, called \emph{doubled leaf}, that intersects $\sigma$ at exactly two points of the same component. 
 A doubled tree clasper is \emph{of parallel type} if its doubled leaf  intersects $\sigma$ at two points with same sign; otherwise, we say that it is \emph{of antiparallel type}. 
 \end{defi}
\begin{defi}\label{def:parallel}
A \emph{pair of parallel tree claspers} for the tangle $\sigma$ is a union of two parallel copies of some tree clasper.\footnote{We note that there is no ambiguity here in the notion of parallel copies, since for a tree clasper the underlying surface is a disk.}
\end{defi}

The next two technical results focus on doubled $C_2$-trees. There are two cases, depending on the type of the doubled tree.

\begin{lem}\label{basicmove2bis}
Let $D$ be a doubled $C_2$-tree of parallel type for a tangle $\sigma$. Then $\sigma_D$ is $C_3$-concordant to $\sigma_P$, 
where $P$ is a pair of parallel $C_2$-trees as shown below. 
$$ \textrm{\includegraphics[scale=1]{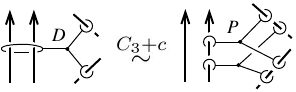}} $$
\end{lem}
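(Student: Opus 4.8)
The plan is to reduce the doubled $C_2$-tree $D$ to a pair of parallel $C_2$-trees by a local manipulation of the doubled leaf, using the clasper calculus of Lemma~\ref{lem:calculus} together with the concordance flexibility of Lemma~\ref{lem:conc}. The starting point is the observation that the doubled leaf of $D$ meets $\sigma$ at two points of the \emph{same} component with the \emph{same} sign (parallel type). First I would split this doubled leaf into two simple leaves $f_1,f_2$ using part~(4) of Lemma~\ref{lem:calculus}, which for a $C_2$-tree yields a $C_3$-equivalence between $\sigma_D$ and the surgery along the resulting union of two $C_2$-trees $C_1\cup C_2$. The key geometric point is that because the two intersection points have the same sign, the two copies so produced are (up to the manipulations below) genuinely \emph{parallel} copies rather than differing by a half-twist; this is exactly where the hypothesis ``of parallel type'' is used, and it is what distinguishes this lemma from its antiparallel counterpart.

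Next I would arrange the two trees $C_1$ and $C_2$ into the standard parallel position depicted in the figure. After the split, $C_1$ and $C_2$ share the same node and edge combinatorics but their leaves on the doubled-leaf side sit in a nested configuration; I would use part~(6) of Lemma~\ref{lem:calculus} to normalize the node, and parts~(1)--(3) to slide edges and leaves past one another and past irrelevant strands of $\sigma$, each such move costing only a $C_3$-equivalence (hence harmless, since the target relation is $C_3$-concordance). The remaining discrepancy between the nested configuration and a true parallel pair is the relative position of the two simple leaves lying on the same component; here I would invoke Lemma~\ref{lem:conc}, which says precisely that exchanging two simple leaves of a $C_k$-tree meeting the same component costs only a $C_{k+1}+c$-move. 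Applied with $k=2$, this gives a $C_3+c$-equivalence, i.e.\ a $C_3$-concordance, which is exactly the equivalence relation in the conclusion and is the reason the statement is phrased with $C_3$-concordance rather than bare $C_3$-equivalence.

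Assembling these steps, I would chain the $C_3$-equivalences from the leaf-split and the edge/leaf/node slides with the single $C_3$-concordance from the leaf exchange to conclude $\sigma_D \stackrel{C_3+c}{\sim} \sigma_P$, which is the assertion. The main obstacle, and the step deserving the most care, is verifying that the leaf-split of part~(4) combined with the sign hypothesis really produces a \emph{parallel} pair and not an antiparallel one or a pair differing by a half-twist: one must track the framings (half-twists on the edges, in the $\oplus/\ominus$ notation) throughout the slides and the exchange, and check that any spurious half-twists introduced cancel or are absorbable by part~(5) of Lemma~\ref{lem:calculus} up to $C_3$-equivalence. Once the framing bookkeeping is settled, the rest is a routine diagram chase through the moves of Lemma~\ref{lem:calculus}, so I would present the framing/sign analysis carefully and relegate the slides to a reference to the figure.
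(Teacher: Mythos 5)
Your proposal follows essentially the same route as the paper's proof: split the doubled leaf via Lemma~\ref{lem:calculus}~(4), then slide one of the resulting simple leaves along the component using Lemma~\ref{lem:calculus}~(1)--(3), with Lemma~\ref{lem:conc} accounting for the concordance in the conclusion. One small precision: the $C_3$-concordance cost arises only when the slid leaf must pass another leaf of \emph{its own} $C_2$-tree (passing leaves of the two \emph{different} $C_2$-trees across one another is already a $C_4$-, hence $C_3$-, equivalence), rather than from adjusting the relative position of the two leaves $f_1$ and $f_2$ produced by the split.
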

\begin{proof}
 Without loss of generality, we can assume that we are in the situation depicted on the left-hand side of Figure~\ref{fig:parallel}.
 \begin{figure}[!h]
\includegraphics[scale=0.9]{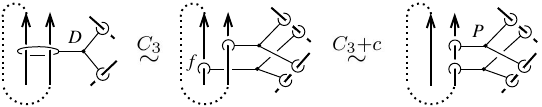} 
  \caption{Proof of Lemma~\ref{basicmove2bis}}\label{fig:parallel}
\end{figure}
 Applying Lemma \ref{lem:calculus}~(4) to split the doubled leaf, we  may replace $D$ with two simple $C_2$-trees as shown in the center of the figure, up to $C_3$-equivalence. Next, we can slide the simple leaf denoted by $f$ in the figure, following the orientation of $\sigma$ until we obtain the desired pair of parallel $C_2$-trees. 
 This sliding can be achieved freely up to $C_3$-concordance, thanks to Lemma \ref{lem:calculus}~(1)-(3) and Lemma~\ref{lem:conc}. This latter lemma is necessary in the case where another leaf of the $C_2$-tree containing $f$, is met along the arc along which we are sliding $f$. 
\end{proof}

\begin{lem}\label{basicmove2}
Let $D$ be a doubled $C_2$-tree of antiparallel type for a tangle $\sigma$. 
The two intersection points of $\sigma$ with the doubled leaf then cobound an arc $\alpha$ in $\sigma$. 
Denote also by $f$ and $g$ the other two leaves of $D$, see the left-hand side of Figure~\ref{double-arc}.
We have the following:
\begin{enumerate}
\item[(i)] If $\alpha\cap f=\alpha\cap g=\emptyset$, then $\sigma_D$ is $C_3$-equivalent to $\sigma$.
\item[(ii)] If $\alpha\cap f\neq \emptyset$ and $\alpha\cap g\neq \emptyset$, then $\sigma_D$ is $C_3$-equivalent to $\sigma$.
\item[(iii)] Otherwise, \emph{i.e.} if only one of $f$ and $g$ intersects $\alpha$, then $\sigma_D$ is $C_3$-concordant to $\sigma$.
 \end{enumerate}
 In particular, we always have that $\sigma_D$ is $C_3$-concordant to $\sigma$. 
\end{lem}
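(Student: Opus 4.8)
The plan is to adapt the split-and-cancel strategy of Lemma~\ref{basicmove2bis}, the new point being that the antiparallel hypothesis makes the two split pieces cancel rather than survive as a parallel pair. First I would apply Lemma~\ref{lem:calculus}(4) (case $k=2$) to split the doubled leaf along its two intersection points $p$ and $q$, replacing $\sigma_D$ up to $C_3$-equivalence by $\sigma_{C_1\cup C_2}$, where $C_1$ and $C_2$ are two $C_2$-trees that agree away from the split leaf, with $C_1$ meeting $\sigma$ at $p$ and $C_2$ at $q$. Since $D$ is of antiparallel type, $p$ and $q$ have opposite signs, so once the distinguished leaves of $C_1$ and $C_2$ are brought into parallel position they differ by a half-twist on the incident edge, and the whole clasper can then be deleted by Lemma~\ref{lem:calculus}(5) (case $k=2$). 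The three cases of the statement simply record what is needed to slide the leaf of $C_1$ along $\alpha$ into this parallel position.

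In case (i) the arc $\alpha$ is disjoint from $f$ and $g$, so the leaf of $C_1$ can be slid to its target using only Lemma~\ref{lem:calculus}(1)--(3), that is, by passing edges across strands of $\sigma$ and across edges of $C_2$, each of which is an exact $C_3$-equivalence; the cancellation of Lemma~\ref{lem:calculus}(5) then gives $\sigma_D\stackrel{C_3}{\sim}\sigma$ with no use of concordance. In case (iii) the sliding path is blocked by exactly one of the two leaves, and moving the distinguished leaf past it is an exchange of two simple leaves of one tree along a common component; this is exactly the situation of Lemma~\ref{lem:conc}, which holds only up to $C_3$-concordance, and this single unavoidable exchange yields the weaker conclusion $\sigma_D\stackrel{C_3+c}{\sim}\sigma$.

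Case (ii), where both $f$ and $g$ lie on $\alpha$, is the delicate one and is where I expect the main difficulty. Here all four intersection points of $D$ with $\sigma$ lie on a single component, so the tree is supported in a neighbourhood of $\alpha$. The naive approach of sliding the distinguished leaf past both obstructions would invoke Lemma~\ref{lem:conc} twice; but since concordances compose rather than cancel, this would again only give $\sigma_D\stackrel{C_3+c}{\sim}\sigma$, which is weaker than the claimed exact equivalence. The exact conclusion therefore requires reaching the cancelling position \emph{without} performing any leaf exchange: the plan is to exploit the full locality of the configuration to reconfigure $C_1$ and $C_2$ using only the exact moves of Lemma~\ref{lem:calculus}(1),(2),(6) (edge--edge, edge--strand and node moves) before applying Lemma~\ref{lem:calculus}(5). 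Carrying out this exchange-free local reduction---and checking that it is precisely the antiparallel hypothesis that makes the two end contributions cancel, in contrast with the parallel case of Lemma~\ref{basicmove2bis} where they reinforce one another---is the crux of the argument. Once the three cases are settled the final assertion is immediate, since each of (i)--(iii) in particular gives $C_3$-concordance.
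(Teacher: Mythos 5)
Your overall strategy (split the doubled leaf via Lemma~\ref{lem:calculus}~(4), slide one of the resulting $C_2$-trees into parallel position, and cancel the pair via Lemma~\ref{lem:calculus}~(5), the antiparallel hypothesis supplying the half-twist needed for cancellation) is exactly the paper's strategy, and your treatment of case (i) matches the paper's. Your case (iii) takes a slightly different but legitimate route: you charge the single unavoidable obstruction to Lemma~\ref{lem:conc} (an exchange of two leaves of the same tree on the same component, valid up to $C_3$-concordance), whereas the paper instead reconfigures the tree with Lemma~\ref{lem:calculus}~(6) and then invokes Lemmas~5.1~(1) and 5.2 of \cite{MY} for the last step. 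Both yield the stated $C_3$-concordance, and your version is arguably more economical.

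The genuine gap is case (ii). You correctly diagnose that the naive sliding argument costs two applications of Lemma~\ref{lem:conc} and therefore only gives $C_3$-concordance, and you correctly state that an exchange-free reduction is needed to get the claimed exact $C_3$-equivalence --- but you then explicitly defer that reduction (``carrying out this exchange-free local reduction \dots is the crux of the argument''), so the hardest case of the lemma is announced rather than proved. The paper's resolution is to work on the doubled tree $D$ \emph{before} splitting: when both $f$ and $g$ meet $\alpha$, all four intersection points of $D$ with $\sigma$ lie on the closure of $\alpha$, and the tree can be reconfigured using only the exact moves of Lemma~\ref{lem:calculus}~(1)--(3) and the node moves of Lemma~\ref{lem:calculus}~(6) into a doubled $C_2$-tree of antiparallel type whose other two leaves are disjoint from the arc cobounded by its doubled leaf, possibly at the cost of a half-twist on one edge (which is harmless, since the subsequent split-and-cancel argument works identically in the twisted case). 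This reduces case (ii) to case (i). Without exhibiting this reconfiguration --- or some other exchange-free route to the cancelling position --- your proof of (ii) does not close, and since (ii) is precisely the case where the exact (non-concordance) conclusion is claimed, the statement as a whole is not established.
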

\begin{proof}
Figure~\ref{double-arc} summarizes the proof of the three cases. 
\begin{figure}[!h]
\includegraphics[scale=0.9]{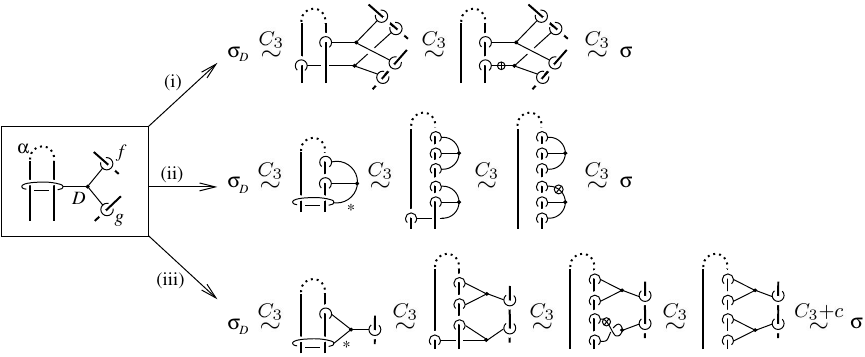} 
  \caption{An $\ast$-label near some edge, 
  expresses the fact that the equivalence yields either the depicted clasper, or the one obtained by inserting a positive half-twist on the $\ast$-marked edge.}\label{double-arc}
\end{figure}

Case (i) is illustrated in the top line.
There, the first equivalence is given by Lemma \ref{lem:calculus}~(4), while the second equivalence follows from Lemma \ref{lem:calculus}~(1)-(3). The result then follows from Lemma \ref{lem:calculus}~(5).

The second line gives the proof of case (ii).
By Lemma \ref{lem:calculus}~(1)-(3) and (6), we obtain the first equivalence.\footnote{As the caption of Figure~\ref{double-arc} indicates, we actually obtain either the depicted clasper, or the one obtained by inserting a positive half-twist on the $\ast$-marked edge: the rest of the figure only gives the proof in the untwisted case, the other case being strictly similar. A similar convention is used in the proof of case (iii).}
The rest of the argument is then exactly the same as in case (i). 

Finally, we prove case (iii), following the third line in Figure~\ref{double-arc}.
We first apply Lemma \ref{lem:calculus}~(1)-(3), then (4) and (1)-(3) as in case (ii), to get the first three equivalences. Applying Lemma~\ref{lem:calculus}~(6) then gives the fourth equivalence. The final step is then given by combining Lemmas~5.1~(1) and 5.2 of \cite{MY}.
\end{proof}

For later use we note the following, which was used in the proof of case (iii). 
\begin{cor}\label{cor}
We have the following equivalence.\\[-0.3cm]
$$ \textrm{\includegraphics[scale=1]{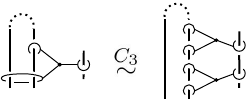}}$$
\end{cor}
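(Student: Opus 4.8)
The plan is to read the claimed equivalence directly off the third line of Figure~\ref{double-arc}, which already records the proof of case~(iii) of Lemma~\ref{basicmove2}. The two claspers displayed in the corollary are an initial and an intermediate configuration appearing in that proof, so the task is essentially bookkeeping: isolate the correct intermediate stage and verify that every move used up to that stage is a genuine $C_3$-equivalence rather than merely a $C_3$-concordance.

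Concretely, I would start from the doubled $C_2$-tree of antiparallel type in which exactly one of the leaves $f,g$ meets the arc $\alpha$ cobounded by the two intersection points of the doubled leaf with $\sigma$. Following the third line of Figure~\ref{double-arc}, I would apply Lemma~\ref{lem:calculus}~(1)--(3) to reposition the edges and leaves, then Lemma~\ref{lem:calculus}~(4) to split the doubled leaf into two simple leaves, then Lemma~\ref{lem:calculus}~(1)--(3) once more, and finally Lemma~\ref{lem:calculus}~(6) to absorb the resulting node twist. The configuration reached after these four equivalences is the one displayed on the right-hand side of the corollary. Crucially, each of these steps is an honest $C_3$-equivalence (the relevant cases of Lemma~\ref{lem:calculus}~(4) and (6), and of (1)--(3) for $k\ge 2$, involve no concordance), so the two sides are $C_3$-equivalent. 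The concordance relation enters the proof of case~(iii) only at the very last step, where \cite[Lem.~5.1~(1) and 5.2]{MY} are combined; it is precisely this final step that we drop in order to extract the sharper equivalence recorded by the corollary.

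The one point requiring care is the bookkeeping of half-twists. As the caption of Figure~\ref{double-arc} explains, several intermediate equivalences are only determined up to inserting a positive half-twist on an $\ast$-marked edge, so I would check that the half-twist data shown in the corollary matches the untwisted branch treated explicitly in the figure, the twisted branch being handled identically. Apart from this sign-tracking, the argument reduces to quoting the clasper moves of Lemma~\ref{lem:calculus} in the stated order, and I expect no genuine obstacle once the correct intermediate stage of the proof of case~(iii) has been pinned down.
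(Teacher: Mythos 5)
Your proposal is correct and matches the paper's intent exactly: the corollary is stated as a byproduct of the proof of case~(iii) of Lemma~\ref{basicmove2}, and the argument is precisely to stop the chain of equivalences in the third line of Figure~\ref{double-arc} after the step given by Lemma~\ref{lem:calculus}~(6), observing that only the final step (via Lemmas~5.1~(1) and 5.2 of \cite{MY}) involves concordance. Your remarks on tracking the optional half-twist on the $\ast$-marked edge likewise mirror the convention set out in the caption of Figure~\ref{double-arc}.
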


\subsection{The clasper index}

We conclude this section with an extra notion associated with claspers, that will be used throughout the rest of this paper.

Let $\sigma$ be an $n$-component bottom tangle.
Let $T$ be a simple $C_1$-tree for $\sigma$. 
If $T$ intersects the  $i$th and $j$th components of $\sigma$  
as illustrated on the left-hand side of Figure~\ref{index} ($1\le i,j\le n$, possibly $i=j$), 
we say that $T$ has {\em index $(i,j)$}.  
\begin{figure}[!h]
\includegraphics[scale=1]{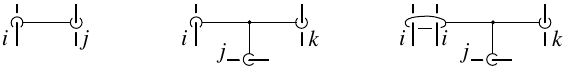}
\caption{Tree claspers of index $(i,j)$ (left), $(i,j,k)$ (center) and $(i^{(2)},j,k)$ (right)}\label{index}
\end{figure}
Likewise, let $T$ be a simple $C_2$-tree for $\sigma$. 
If $T$ intersects the $i$th, $j$th and $k$th components of $\sigma$ 
as illustrated in the center of Figure~\ref{index} ($i,j,k\in\{1,...,n\}$), 
we say that $T$ has {\em index $(i,j,k)$}.  Note that the index of a $C_2$-tree is defined only up to cyclic permutations.

This notion extends to doubled $C_2$-trees, as follows. If the doubled leaf of some doubled tree $T$ intersects twice the $i$th component of $\sigma$, then we write $i^{(2)}$ instead of $i$ when writing the index of $T$. For example, the doubled $C_2$-tree represented on the the right-hand side of Figure~\ref{index} has index $(i^{(2)},j,k)$.

\section{Invariants}

This section reviews the definition and relevant properties of the invariants involved in our classification results.

\subsection{Invariants for bottom tangles}

\subsubsection{The Casson knot invariant $a_2$}

The \emph{Alexander-Conway polynomial} of oriented links is a polynomial  $\nabla$ in the variable $z$ defined by the fact that $\nabla_U(z)=1$, where $U$ denotes the unknot, and the skein relation 
 $$\nabla_{\Ov}(z) - \nabla_{\und}(z) = z\nabla_{\smoo}(z), $$
where $\Ov$, $\und$ and $\smoo$ are three links that are identical except in a $3$-ball where they look as depicted.

The Alexander-Conway polynomial of a knot $K$ has the form 
$$\nabla_K(z)= 1 + \sum_{k\ge 1} a_{2k}(K) z^{2k}. $$
In particular, the coefficient $a_2$ of $z^2$ in $\nabla$ is a knot invariant, called the \emph{Casson knot invariant}. 
\medskip

Now, we can use the closure-type operations of Figure~\ref{closure} to define several invariants of bottom tangles, as follows. 
\begin{defi} 
 Let $\sigma$ be an $n$ component bottom tangle, and let  $i,j\in \{1,\ldots,n\}$ such that $i\neq j$.
 \begin{itemize}
  \item $a_2(\sigma;i)$ denotes the Casson knot invariant $a_2$ of the closure $\widehat{\sigma_i}$.  
  \item $a_2(\sigma;ij)$ denotes the Casson knot invariant $a_2$ of the plat closure $p_{ij}(\sigma)$. 
 \end{itemize}
\end{defi}
\noindent 
As in the introduction, we simply denote these invariants by $a_2(i)$ and $a_2(ij)$.  
Note that the latter invariant $a_2(ij)$ was first introduced in \cite{Meilhan}, where it  appears under the notation $V_2$. 

\subsubsection{Milnor invariants} \label{sec:milnor}

We now briefly review Milnor invariants of bottom tangles, following \cite{Levine}.\footnote{Note that Levine use in \cite{Levine} the terminology \lq string links\rq~ for what we call here bottom tangles.} 
We  also refer the reader to \cite{HL} for the equivalent case of string links, and to \cite{M1,M2} for the link case originally treated by Milnor.

Let $\sigma=\sigma_1\cup \cdots \cup \sigma_n$ be an $n$-component bottom tangle in $D^2\times [0,1]$. 
Fix a basepoint $\ast$ in $\partial D^2\times \{0\}$, and denote by $G(\sigma)$ the fundamental group of the complement of $\sigma$ in $D^2\times [0,1]$. 
Denote by $m_1, \ldots, m_n$ the canonical \emph{meridian} elements of $G(\sigma)$ represented by the $n$ arcs sitting in $D^2\times \{0\}$ as shown on the left-hand side of Figure~\ref{fig:bottom}.
\begin{figure}[!h]
\includegraphics[width=8cm]{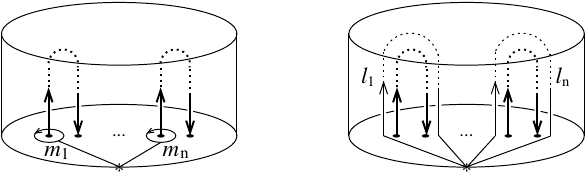}
  \caption{Meridians and longitudes of a bottom tangle}\label{fig:bottom}
\end{figure}
\noindent Denote also by $l_1,\ldots,l_n\in G(\sigma)$ the longitude elements, which are represented by parallel copies of each component of $\sigma$ as shown on the right-hand side of Figure~\ref{fig:bottom}; here we consider \emph{preferred longitudes}, which are determined by the fact that each $l_i$ is trivial in the abelianization $G(\sigma)/[G(\sigma),G(\sigma)]$.
\medskip

As usual in knot theory, a presentation for $G(\sigma)$ can be extracted from a diagram of $\sigma$, where each arc of the diagram yields a \emph{Wirtinger generator}, and each crossing yields a conjugation relation. But the situation turns out to be much simpler when quotienting by the lower central series subgroups.

The \emph{lower central series} of a group $G$ is the descending series $\left(G_q\right)_{q\ge 1}$ of subgroups, defined inductively by $G_1=G$ and $G_{q+1}=[G,G_q]$. 
Let $F$ denote the free group on $n$ generators $x_1,\cdots, x_n$. 
By a theorem of Stallings \cite[Thm.~5.1]{stall}, the map $F\longrightarrow G(\sigma)$ defined by sending $x_i$ to the meridian element $m_i$ for each $i$, induces isomorphisms 
$$ \frac{F}{F_q}\xrightarrow{\simeq} \frac{G(\sigma)}{G(\sigma)_q} $$ 
for all $q\ge 2$. 

For a fixed level $q\ge 2$, denote by $l^q_i\in F/F_q$ the image of the $i$th preferred longitude through this isomorphism. 
The \emph{Magnus expansion} of $l^q_i$ is the
formal power series $E(l^q_i)$ in non-commuting variables $X_{1},\ldots,X_{n}$, obtained by substituting $1+X_j$ for $x_j$, and $1-X_j+X_j^2-X_j^3+\ldots$ for $x_j^{-1}$.
\begin{defi}
For any sequence $I=i_{1}\ldots i_{k-1}i$ of indices in $\{1,\ldots,n\}$ ($k\leq q$), the {\it Milnor invariant} $\mu_{\sigma}(I)$ is the coefficient of $X_{i_{1}}\cdots X_{i_{k-1}}$ in $E(l^q_i)$. 
The \emph{length} of this Milnor invariant is the length $k$ of the sequence $I$.
\end{defi}
Length $1$ Milnor invariants are set to be zero by convention. 
Length $2$ Milnor invariants $\mu_\sigma(ij)$ ($1\le i\neq j\le n$) coincide with the linking number of components $i$ and $j$, and $\mu(ijk)$ is therefore sometimes referred to as the triple linking number.
\begin{rem}\label{rem:wellwellwell}
Let $\sigma$ be a bottom tangle with vanishing linking numbers. 
Then it is well known that $\mu_{\sigma}(I)=0$ for length $\le 3$ sequences $I$ involving only $1$ or $2$ distinct indices. Indeed if one such invariants $\mu(I)$ were nontrivial, then as first non-vanishing Milnor invariant of $\sigma$ it would be equal to the invariant $\overline{\mu}(I)$ of the closure $\widehat{\sigma}$; but the latter is necessarily trivial \cite[\S~4]{M2}, leading to a contradiction. 
\end{rem}

\subsection{Properties}

In this subsection, we gather the main properties of the invariants defined above, that will be used in the proofs of our main results. 
Let us begin by recalling, in the next two lemmas, some known topological invariance properties.

Recall that \emph{link-homotopy} is an equivalence relation on tangles, generated by isotopies and self-crossing changes, that is, crossing changes involving two strands of a same component \cite{M2}.  
The following combines results of Stallings \cite{stall}, Casson \cite{Casson}, and Habegger-Lin \cite{HL}.
\begin{lem}\label{lem:lh}
 For any sequence $I$ of indices in $\{1,\ldots,n\}$, Milnor invariant $\mu(I)$ is a concordance invariant.
 Moreover, if each index in $\{1,\ldots,n\}$ appears at most once in $I$, then 
$\mu(I)$ is a link-homotopy invariant.
\end{lem}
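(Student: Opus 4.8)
The lemma states: For any sequence $I$, Milnor invariant $\mu(I)$ is a concordance invariant. Moreover, if each index appears at most once in $I$ (non-repeating index), then $\mu(I)$ is a link-homotopy invariant.

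Let me think about how to prove this.\textbf{The statement to prove.} The claim has two parts: first, that every Milnor invariant $\mu(I)$ is a concordance invariant, and second, that when no index repeats in $I$, the invariant $\mu(I)$ is moreover a link-homotopy invariant. Since the lemma is explicitly attributed to Stallings, Casson and Habegger--Lin, the plan is not to reprove these foundational results from scratch, but to assemble them into the precise form stated for bottom tangles.

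\textbf{Plan for the concordance statement.} The key input is Stallings' theorem \cite{stall}: a concordance between two tangles $\sigma$ and $\sigma'$ induces, by excision and the Mayer--Vietoris/van Kampen argument applied to the concordance cylinder, isomorphisms on each lower central series quotient $G(\sigma)/G(\sigma)_q \xrightarrow{\simeq} G(\sigma')/G(\sigma')_q$ compatible with the meridian-to-meridian identification. The plan is to recall that these isomorphisms send the $i$th preferred longitude of $\sigma$ to that of $\sigma'$ in $F/F_q$, using that preferred longitudes are characterized homologically (they are determined by triviality in $G/[G,G]$, which is preserved). Since $\mu_\sigma(I)$ is read off as a Magnus-expansion coefficient of the image $l_i^q$ in $F/F_q$, and this image is unchanged under the Stallings isomorphism for $q$ larger than the length of $I$, the invariant $\mu(I)$ agrees for $\sigma$ and $\sigma'$. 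I would phrase this for bottom tangles directly, invoking the Stallings isomorphism recalled earlier in the excerpt.

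\textbf{Plan for the link-homotopy statement.} Here the additional input is the Habegger--Lin reduction \cite{HL}, building on Casson \cite{Casson}: Milnor's invariants indexed by sequences with no repeated index descend to the \emph{reduced} lower central series quotients, in which each longitude is considered only modulo the normal subgroup generated by its own meridian. Link-homotopy is generated by self-crossing changes, and a self-crossing change alters the $i$th longitude only by conjugation by a power of $m_i$ (equivalently, it changes $l_i^q$ only within the subgroup normally generated by $m_i$). The plan is to argue that when $I$ has no repeated index, the Magnus coefficient extracting $\mu_\sigma(I)$ is insensitive to such modifications: a monomial $X_{i_1}\cdots X_{i_{k-1}}$ with distinct indices cannot absorb an extra factor $X_i$ coming from the self-interaction, so the coefficient is unchanged. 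Invariance under self-crossing changes, together with the already-established concordance (hence isotopy) invariance, yields link-homotopy invariance.

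\textbf{Main obstacle.} The delicate point is the link-homotopy half, specifically controlling exactly how a self-crossing change on the $i$th component perturbs the longitude $l_i^q$ and verifying that this perturbation lies in the kernel of the coefficient functional picking out $\mu(I)$ for non-repeating $I$. The honest route is to cite the Habegger--Lin formalism of the reduced free group and observe that non-repeating Milnor invariants are well-defined on it; I would lean on that rather than re-deriving the reduction, so the proof reduces to correctly matching the bottom-tangle longitudes to the string-link longitudes of \cite{HL} under the standard correspondence, and checking the self-crossing-change contribution degree by degree.
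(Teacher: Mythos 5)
Your proposal is correct and matches the paper's treatment: the paper gives no proof of this lemma at all, simply attributing it to Stallings, Casson, and Habegger--Lin, and your sketch assembles exactly those cited results (Stallings' nilpotent-quotient isomorphism for the concordance part, and the Habegger--Lin reduced-group/Magnus-expansion argument for the link-homotopy part) in the standard way.
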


\begin{lem}\label{lem:C3inv}
 For any $i,j,k~(i< j< k)$, 
 $a_2(i)$, $a_2(ij)$, $\mu(ij)$, $\mu(jiij)\pmod2$ and $\mu(ijk)$ are all invariants of $C_3$-equivalence.
\end{lem}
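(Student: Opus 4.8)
\textbf{The plan is to} establish each of the five invariance assertions by showing that the corresponding quantity is unchanged under surgery along a single $C_3$-tree, since $C_3$-equivalence is generated by such surgeries together with isotopies. The core principle I would invoke is that a $C_3$-move on a tangle $\sigma$ changes the underlying link (and its closures) only by a clasp-pass move, and more generally preserves all finite type invariants of degree $\le 2$. Indeed, Habiro's theory tells us that surgery along a single $C_k$-tree changes a finite type invariant of degree $<k$ by zero; thus any Vassiliev invariant of degree $\le 2$ is automatically a $C_3$-equivalence invariant. The strategy is therefore to recognize each of the five expressions as (essentially) a degree-$\le 2$ invariant of an appropriate closure of $\sigma$.

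\textbf{First I would handle} $a_2(i)$ and $a_2(ij)$. The Casson knot invariant $a_2$ is the degree-$2$ coefficient of the Alexander--Conway polynomial, and it is classical that $a_2$ is a Vassiliev invariant of degree exactly $2$; Habiro in fact characterizes clasp-pass ($C_3$) equivalence of knots by $a_2$. Since $\widehat{\sigma_i}$ and $p_{ij}(\sigma)$ are knots obtained from $\sigma$ by fixed closure operations, a $C_3$-tree for $\sigma$ (if it meets only the relevant strands) induces a $C_3$-tree, hence a clasp-pass move, on each of these knots; and a $C_3$-tree meeting strands outside the closed-up components leaves those knots unchanged. Either way $a_2(i)$ and $a_2(ij)$ are preserved, giving their $C_3$-invariance directly.

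\textbf{Next I would treat} the Milnor invariants. For $\mu(ij)$ (the linking numbers) this is immediate: linking numbers are degree-$1$ Vassiliev invariants and are visibly unchanged by a $C_k$-move with $k\ge 2$, since a $C_2$-tree is a band-sum with the Borromean rings (as noted in Example~\ref{Ex:Cn}) and has pairwise vanishing linking effect, a fortiori a $C_3$-tree does. For $\mu(ijk)$, the triple linking number, I would use that it is a degree-$2$ Vassiliev invariant of the link; alternatively, $\mu(ijk)$ is unchanged under $C_3$-moves because a $C_3$-tree contributes only to Milnor invariants of length $\ge 4$ — this is precisely the content of the relation between the degree of a clasper and the length of the Milnor invariants it can affect. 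The remaining case $\mu(jiij) \pmod 2$ is the subtlest: $\mu(jiij)$ has length $4$, so it is not automatically preserved by $C_3$-moves in general. Here I would argue that a $C_3$-tree can change $\mu(jiij)$ only by an \emph{even} amount, most cleanly by analyzing the effect of a clasp-pass move on the relevant length-$4$ Milnor number using the first Massey-product/derivation description and checking the parity of the contribution, or by appealing to the known symmetry and indeterminacy relations satisfied by $\mu(jiij)$ modulo $2$.

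\textbf{The hard part will be} the last claim, that $\mu(jiij)$ is a $C_3$-invariant only after reduction modulo $2$. Unlike the other four quantities, which sit in degrees low enough to be killed outright by a single $C_3$-tree, $\mu(jiij)$ lives at exactly the length where a $C_3$-surgery can genuinely alter it, so the entire point is to pin down that the alteration is always even. I expect the cleanest route is a direct computation of how a clasp-pass move changes $\mu(jiij)$ — tracking the Magnus expansion of the relevant longitude through the local modification — and observing that the variation is a multiple of $2$; the clasper calculus of Lemma~\ref{lem:calculus}, together with the doubled-tree analysis in Lemmas~\ref{basicmove2bis} and \ref{basicmove2}, should supply exactly the local models needed to see this parity. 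If a purely diagrammatic argument proves awkward, I would instead cite the degree-$\le 2$ finite-type invariance uniformly and isolate $\mu(jiij)\bmod 2$ as the one invariant requiring the extra parity bookkeeping.
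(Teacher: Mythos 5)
Your proposal follows essentially the same route as the paper: the published proof likewise reduces $a_2(i)$ and $a_2(ij)$ to Habiro's result that degree-$k$ finite type invariants are $C_{k+1}$-equivalence invariants, and handles $\mu(ij)$, $\mu(ijk)$ via Habiro's theorem that Milnor invariants of length $\le 3$ are $C_3$-equivalence invariants. For the one genuinely delicate point, $\mu(jiij)\pmod 2$, the paper simply cites \cite[Rem.~5.4]{MY}, so your sketched parity computation (which is in effect what that reference carries out) matches the intended argument, though like the paper it is not spelled out in full here.
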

\begin{proof}
  It is shown in \cite[Thm.~7.2]{Habiro} that all Milnor invariants of length $\le 3$ are invariants of $C_3$-equivalence. 
  The fact that the Casson knot invariant $a_2$ is a $C_3$-equivalence invariant, is also shown in \cite{Habiro}; this is a consequence of a more general result stating that finite type invariants of degree $k$ are $C_{k+1}$-equivalence invariants \cite[\S~6]{Habiro}. 
  It readily follows that the invariants $a_2(i)$ and $a_2(ij)$ also  are $C_3$-equivalence invariants.
  Finally, the result for $\mu(jiij)\pmod2$ is given in \cite[Rem.~5.4]{MY}. 
\end{proof}

 Using  Lemma~\ref{lem:C3inv}, we next show the following variation formulas. 
\begin{lem}\label{keylemma}
Let $\sigma$ be an $n$-component bottom tangle. 
Let $i,j,k$ be indices such that $1\leq i\le j\le k \leq n$.
Let $C_{ijk}$, resp. $C_{ijk}^{-1}$ be a simple $C_2$-tree of index $(i,j,k)$ for $\sigma$, such that there exists a $3$-ball in $D^2\times [0,1]$ intersecting $\sigma\cup C_{ijk}$, resp. $\sigma\cup C_{ijk}^{-1}$, as shown on the left-hand side, resp. right-hand side, of the figure below.
$$ \textrm{\includegraphics[scale=1]{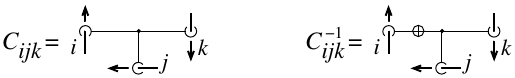}} $$
We have the following variation formulas, for $\varepsilon=\pm 1$: 
\begin{enumerate}
\item[(i)]  $a_2(\sigma_{C^{\varepsilon}_{iii}};r)-a_2(\sigma;r)=\varepsilon \delta_{i,r}$, for all $r\in \{1,\ldots,n\}$;
\item[(ii)]  If $i<j$, $a_2(\sigma_{C^{\varepsilon}_{iij}};rs)-a_2(\sigma;rs)=\varepsilon \delta_{\{i,j\},\{r,s\}}$, for all $r,s$ such that  $1\le r<s\le n$;
\item[(iii)]  $\mu_{\sigma_{C^{\varepsilon}_{iij}}}(srrs)-\mu_{\sigma}(srrs)\equiv \delta_{\{i,j\},\{r,s\}}\pmod2$, for all $r,s$ such that  $1\le r<s\le n$;
\item[(iv)] $\mu_{\sigma_{C^{\varepsilon}_{ijk}}}(rst)-\mu_{\sigma}(rst)=- \varepsilon\delta_{\{i,j,k\},\{r,s,t\}}$, for all $r,s,t$ such that  $1\le r<s<t\le n$.
\end{enumerate}
\end{lem}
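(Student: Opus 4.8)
The plan is to reduce each of the four formulas to an explicit computation in a standard local model, exploiting the fact that all four quantities on the left-hand sides are differences of $C_3$-equivalence invariants (Lemma~\ref{lem:C3inv}), together with the band-sum-with-Borromean-rings description of a $C_2$-move (Example~\ref{Ex:Cn}).

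First I would establish the Kronecker symbol, i.e.\ that the variation vanishes unless the set of components met by the tree matches the index set of the target invariant. The point is that each target invariant depends only on the components recorded by its index: $a_2(r)=a_2(\widehat{\sigma_r})$ depends only on the $r$th component, $a_2(rs)$ and $\mu(srrs)$ only on the sublink $\sigma_r\cup\sigma_s$, and $\mu(rst)$ only on $\sigma_r\cup\sigma_s\cup\sigma_t$. I would use that surgery along a clasper commutes with forgetting tangle components, together with the basic clasper fact that a tree clasper having a leaf which bounds a disk disjoint from the tangle and from the rest of the clasper yields the identity surgery. Hence, whenever the component set of $C_{ijk}$ is not equal to the index set of the target invariant, at least one simple leaf lies on a forgotten component, the induced surgery on the relevant (sub)knot is trivial, and the variation vanishes. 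This accounts for the factor $\delta$ in (i)--(iv) and reduces everything to the matching case.

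In the matching case for (i) and (ii), the three simple leaves of $C_{ijk}$ all lie on a single knot: on $\widehat{\sigma_i}$ in case (i), and, after the plat closure identifies the $i$th and $j$th components, on the knot $p_{ij}(\sigma)$ in case (ii). By Example~\ref{Ex:Cn}, surgery along the standard simple $C_2$-tree is then a single $\Delta$-move on that knot, supported in a ball. Invariance of $a_2$ under $C_3$-equivalence lets me compute the variation in the trivial model, where a direct skein computation shows that a single positive (resp.\ negative) $\Delta$-move changes $a_2$ by $+1$ (resp.\ $-1$), yielding the coefficient $\varepsilon$. For (iv), in the matching case the standard $C_2$-tree of index $(r,s,t)$ is, again by Example~\ref{Ex:Cn}, a band sum with a copy of the Borromean rings along components $r,s,t$; since the triple linking number of the Borromean rings is $\pm1$ and $\mu(rst)$ is unaffected by the $C_3$-corrections needed to put $C_{ijk}$ in standard position, the variation is $\varepsilon$. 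For (iii) I would carry out the Magnus-expansion computation of the length-$4$ invariant $\mu(jiij)$ in the model where $C_{iij}$ is attached to the trivial two-component tangle, reading off the coefficient of $X_jX_iX_i$ in the resulting longitude; this gives a change of $1$ modulo $2$, the sign being irrelevant since we work $\bmod\,2$.

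The genuinely delicate points, which I expect to be the main obstacle, are twofold. First, one must justify rigorously that the variation is independent of the ambient tangle $\sigma$: the forgetting-components reduction together with $C_3$-invariance must be shown to truly localize each computation to the $3$-ball, so that the answer is a universal constant depending only on the index type and on $\varepsilon$. Second, case (iii) is the most subtle, because $\mu(jiij)$ is only a $C_3$-invariant \emph{modulo} $2$ \cite{MY}; here one must track the length-$4$ Magnus coefficient with care and verify that the higher-degree clasper corrections arising when $C_{iij}$ is placed in standard position do not alter the answer $\bmod\,2$. The remaining work is the sign bookkeeping in (i), (ii) and (iv), fixed uniformly by the standard orientation distinguishing $C_{ijk}$ from $C_{ijk}^{-1}$.
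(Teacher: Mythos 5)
Your proposal follows essentially the same route as the paper's proof: reduce to $\varepsilon=1$, kill the non-matching cases by deleting trees with a leaf on a forgotten component (Habiro's move 1), localize the clasper up to $C_3$-equivalence, and evaluate in a standard model (the paper realizes the $a_2$-change as a connected sum with a trefoil rather than your $\Delta$-move skein computation, and reads $\mu(ijk)$ off the Borromean rings, exactly as you do). The one ingredient you leave unnamed is, for part (iii), the additivity of length-$4$ Milnor invariants under stacking --- valid here precisely because $\1_{C_{iij}}$ is link-homotopically trivial and hence has vanishing length-$\le 3$ invariants --- which is what converts your local Magnus computation on the trivial tangle into the variation formula for an arbitrary $\sigma$; the paper supplies this via \cite[Lem.~3.3]{MYpacific} together with \cite[Lem.~5.3~(3)]{MY}.
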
 
\begin{proof}
We first observe that it suffices to prove these formulas for the case $\varepsilon=1$; the case $\varepsilon=-1$ follows, using Lemma~\ref{lem:calculus}~(5) and the $C_3$-invariance Lemma \ref{lem:C3inv}.

Let us first prove (i). It is clear that $a_2(\sigma_{C_{iii}};r)=a_2(\sigma;r)$ if $r\neq i$, since $C_{iii}$ becomes trivial when removing the $i$th component $\sigma_i$ of $\sigma$, by Habiro's move 1. Note that $\sigma$ can be regarded as obtained from the trivial bottom tangle by surgery along a union of simple $C_1$-trees. Hence by Lemma~\ref{lem:calculus}, the $i$th component of $\widehat{\sigma_{C_{iii}}}$ is $C_3$-equivalent to the connected sum $\widehat{\sigma_i}\shp \hat{T}$, where $T$ is shown on the left-hand side of Figure~\ref{fig:exclasp}. 
Since the Casson knot invariant of the trefoil knot is one, and using the fact that $a_2$ is additive under connected sum and is a $C_3$-equivalence invariant (Lemma \ref{lem:C3inv}), the result follows. Formula (ii) follows from the very same arguments, since the image of $C_{iij}$ under the plat-cloture $p_{ij}$, is a copy of $C_{iii}$.\\
We now turn to (iii).
As above, by Habiro's move 1, the case $\{i,j\}\neq \{r,s\}$ is clear, so we only consider the case $\{i,j\}=\{r,s\}$. 
For convenience, we give here the proof for the equivalent situation where $\sigma$ is an $n$-component string link, which we may regard as obtained from the trivial string link $\1$ by surgery along a union of tree claspers. 
Now, using Lemma~\ref{lem:calculus}~(1)-(3), we may 'isolate' $C_{iij}$ up to $C_3$-equivalence, in the sense that we have $\sigma_{C_{iij}}\stackrel{C_3}{\sim} \sigma\cdot \1_{C_{iij}}$. Hence using the $C_3$-equivalence invariance Lemma \ref{lem:C3inv}, it  suffices to show that $\mu_{\sigma\cdot \1_{C_{iij}}}(jiij)\equiv \mu_{\sigma}(jiij)+ 1\pmod2$. 
 Observe that $\1_{C_{iij}}$ is link-homotopic to $\1$.\footnote{This follows from the main result of \cite{FY}, but follows also easily from Figure~\ref{fig:exclasp}.} Hence by Lemma~\ref{lem:lh} and Remark~\ref{rem:wellwellwell} we have that $\mu_{\1_{C_{iij}}}(I)=0$ for any length $3$ sequence $I$. 
 Using the additivity property of Milnor invariants \cite[Lem.~3.3]{MYpacific}, this implies that $\mu_{\sigma_{C_{iij}}}(J)=\mu_{\sigma}(J)+\mu_{\1_{C_{iij}}}(J)$ for any length $4$ sequence $J$. The result then directly follows from \cite[Lem.~5.3~(3)]{MY}. \\
 Formula (iv) for Milnor's triple linking number $\mu(rst)$ relies on a similar (though somewhat  simpler) argument, using the fact that surgery along $C_{ijk}$ is a band sum with a copy of the Borromean rings (Figure~\ref{fig:exclasp}) whose triple linking number $\mu(ijk)$ equals $-1$.
\end{proof}
\begin{rem}\label{remSL}
 Building on the proof of (iii), we can further deduce that, if $C$ is an index $(i,i,j)$ $C_2$-tree for the trivial string link $\1$, then $\mu_{\1_C}(jiij)\equiv  \mu_{\1_C}(iijj)\equiv  \beta\left(\widehat{\1_C}\right) \pmod2$, where $\beta$ denotes the \emph{Sato-Levine invariant}. Indeed, the $i$th and $j$th components of $\widehat{\1_C}$ form a copy of the Whitehead link (Figure~\ref{fig:exclasp}), which has Sato-Levine invariant $1$. 
\end{rem}

The following lemma follows directly from Lemma~\ref{keylemma}.
\begin{lem}\label{lem:milnorparallel}
 Let $P$ be a pair of parallel $C_2$-trees for an $n$-component bottom tangle $\sigma$. 
 Then for any $i,j,k~(i<j<k)$, we have 
 $a_2(\sigma_P;i)\equiv a_2(\sigma;i) \pmod2$, $\mu_{\sigma_P}(ijk)\equiv \mu_{\sigma}(ijk) \pmod2$ and $\mu_{\sigma_P}(jiij)\equiv \mu_{\sigma}(jiij) \pmod2$.
\end{lem}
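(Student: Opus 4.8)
The plan is to derive the three congruences directly from the variation formulas of Lemma~\ref{keylemma}, exploiting that those formulas are \emph{independent of the ambient tangle} and that $P$ is made of two copies of one and the same $C_2$-tree. Write $P=T_1\cup T_2$, where $T_2$ is a parallel copy of $T_1$; in particular $T_1$ and $T_2$ carry the same index, say $(a,b,c)$ (possibly with repetitions among $a,b,c$).

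First I would use the clasper calculus of Lemma~\ref{lem:calculus}~(1)-(3) to pull the two copies apart up to $C_3$-equivalence, so that $\sigma_P\stackrel{C_3}{\sim}(\sigma_{T_1})_{T_2}$, and to bring each $T_i$ into one of the standard model trees $C^{\varepsilon}_{abc}$ of Lemma~\ref{keylemma}. Since each of $a_2(i)$, $\mu(ijk)$ and $\mu(jiij)\bmod 2$ is a $C_3$-equivalence invariant (Lemma~\ref{lem:C3inv}), I may then compute its total variation along the two successive surgeries. Writing $v$ for any one of the three invariants, Lemma~\ref{keylemma}~(i), (iii) and (iv) show that the variation of $v$ under a single such standard $C_2$-tree equals a constant of the form $\pm\delta$ (an appropriate Kronecker symbol matching the index $(a,b,c)$ against the indices of $v$), and crucially this constant does not depend on the tangle being surgered. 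Consequently
\[
v(\sigma_P)-v(\sigma)=\bigl(v(\sigma_{T_1})-v(\sigma)\bigr)+\bigl(v((\sigma_{T_1})_{T_2})-v(\sigma_{T_1})\bigr)
\]
is a sum of two terms, each of the form $\pm\delta$, hence an even integer; this gives $v(\sigma_P)\equiv v(\sigma)\pmod 2$ for each of $a_2(i)$, $\mu(ijk)$ and $\mu(jiij)$, as desired. Depending on the index $(a,b,c)$ of $P$, at most one of the three invariants is actually moved, but in every case by an even amount.

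The main obstacle is the reduction of the arbitrary simple $C_2$-trees $T_1,T_2$ to the standard models $C^{\varepsilon}_{abc}$ for which Lemma~\ref{keylemma} is stated, together with the attendant ambient-independence of the variation: this is what makes the two telescoped contributions above individually equal to a fixed $\pm\delta$, rather than to some a priori ambient-dependent integers. Once this normalization is carried out via Lemma~\ref{lem:calculus}, the mod-$2$ conclusion is immediate, since a sum of two values $\pm\delta$ is always even.
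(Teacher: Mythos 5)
Your argument is correct and is essentially the paper's own proof: the paper deduces Lemma~\ref{lem:milnorparallel} directly from Lemma~\ref{keylemma}, exactly as you do, by noting that the two parallel copies each shift the relevant invariant by an ambient-independent quantity of the form $\pm\delta$, so the total variation is even. Your elaboration of the normalization of each copy to a model tree $C^{\varepsilon}_{abc}$ via Lemma~\ref{lem:calculus} and the $C_3$-invariance of the three quantities (Lemma~\ref{lem:C3inv}) supplies precisely the details the paper leaves implicit.
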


In \cite[\S 3]{M-W-Y}, Miyazawa, Wada and the second author studied the effect of a $2n$-move on Milnor invariants. Here, a $2n$-move is a local move on tangles that inserts or deletes $2n$ half-twists on two parallel strands; in particular, a $2$-move is equivalent to a crossing change.  
The following invariance result shall be useful in Section \ref{sec:classtangles}. It is proved using the same techniques as in \cite[\S 3]{M-W-Y}, and is in some sense a weak version of \cite[Prop.~3.4]{M-W-Y}. 
\begin{lem}\label{lem:8move}
 Let $\sigma$ and $\sigma'$ be two bottom tangles that differ by a single $8$-move. Then $\mu_{L'}(I)\equiv  \mu_{L}(I)\pmod2$ for any sequence $I$ of length at most $4$. 
\end{lem}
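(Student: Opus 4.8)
The plan is to prove Lemma~\ref{lem:8move} by reducing an $8$-move to a succession of operations whose effect on Milnor invariants modulo $2$ is controlled, exactly in the spirit of \cite[\S 3]{M-W-Y}. The essential observation is that an $8$-move on two parallel strands can be decomposed into four $2$-moves, each of which is a crossing change, and a crossing change is surgery along a simple $C_1$-tree (Example~\ref{Ex:Cn}). So $\sigma'$ is obtained from $\sigma$ by surgery along a disjoint union of four $C_1$-trees whose leaves all meet the two parallel strands involved in the $8$-move. The task is then to understand how inserting these four mutually parallel crossing changes alters each Milnor invariant of length at most $4$, modulo $2$.

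First I would set up the standard Magnus-expansion machinery from Section~\ref{sec:milnor}: writing $\mu_{\sigma'}(I)-\mu_{\sigma}(I)$ as the difference of coefficients in the Magnus expansions of the corresponding longitudes, and tracking how a single crossing change modifies the relevant Wirtinger/longitude words. The key structural point, borrowed from \cite[\S 3]{M-W-Y}, is that the four crossing changes come in a highly symmetric configuration (two strands, half-twists inserted uniformly), so their individual contributions to a fixed Milnor number of length $\le 4$ either cancel in pairs or combine into a multiple of a fixed integer. Concretely, I expect the total variation to take the shape $4\,(\textrm{contribution of a single full twist})$ plus lower-order symmetric corrections, and since we are only claiming the result modulo $2$, any contribution that is manifestly even — in particular any quantity that is a multiple of $4$, or that arises an even number of times by the symmetry of the parallel strands — drops out. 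This is precisely why the statement is a \emph{weak} version of \cite[Prop.~3.4]{M-W-Y}: the sharper proposition computes the variation on the nose, whereas here I only need the coarse mod-$2$ vanishing, which lets me discard all the delicate integral terms.

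The main technical step, which I expect to be the principal obstacle, is the bookkeeping for length-$4$ sequences $I$. For lengths $\le 3$ the variation should vanish mod $2$ almost immediately, because the relevant contributions are linear in the inserted crossings and the $8$-move inserts an even (indeed, a multiple-of-four) number of them, so every term is even. For length $4$, however, the Magnus coefficients pick up quadratic interactions between pairs of the inserted crossings, and these cross-terms are not individually even; one must verify that the symmetric arrangement of the four half-twist insertions forces these quadratic contributions to appear in even multiplicity, again using the additivity of Milnor invariants \cite[Lem.~3.3]{MYpacific} to linearize the problem and Lemma~\ref{lem:lh} to discard concordance-trivial pieces. I would organize this by isolating the local tangle where the $8$-move occurs, computing the longitude words there, and checking the mod-$2$ cancellation of the quadratic terms directly, exactly as in the length-$4$ analysis of \cite[\S 3]{M-W-Y}.

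Finally, I would note that since all steps are purely local and follow the template already established in \cite[\S 3]{M-W-Y}, the proof can reasonably be compressed to a reference plus the indication of which terms are even and why, rather than a full recomputation; the honest content is confined to the length-$4$ cross-term cancellation described above.
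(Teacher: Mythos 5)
Your overall strategy (track the effect of the inserted twists on the Magnus expansions of the longitudes and argue a mod-$2$ cancellation) is in the same spirit as the paper's, but the proposal has a genuine gap at exactly the point you yourself flag as ``the principal obstacle'': the cancellation of the cross-terms is asserted, not proved, and the mechanism that makes it work is never identified. The clean way to see it --- and this is what the paper does, following \cite{M-W-Y} --- is to observe that the eight half-twists are inserted \emph{consecutively}, so each affected longitude of $\sigma'$ is obtained from that of $\sigma$ by inserting the fourth power $w^4$ of a single word $w$ at one spot. Then
\[
E(w^4)=(1+W)^4=1+4W+6W^2+4W^3+W^4\equiv 1+W^4 \pmod 2,
\]
and since $W^4$ only contributes in degree $\ge 4$, no Magnus coefficient of degree $\le 3$, hence no Milnor invariant of length $\le 4$, changes modulo $2$. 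Your decomposition into four \emph{separate} crossing changes obscures this: a priori the four inserted elements are four different conjugates $w_1,\dots,w_4$ inserted at four points of the longitude, their degree-$2$ parts differ by commutator corrections, and the six cross-terms $W_iW_j$ then do not obviously sum to something even in degree $3$. The ``symmetry'' you invoke is really the statement that the four insertions are adjacent and identical, i.e.\ that the insertion is literally $w^4$; without making that explicit, the length-$4$ bookkeeping you defer does not close.

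Two smaller points. First, your claim that for sequences of length $\le 3$ the variation is ``linear in the inserted crossings'' is not correct: the quadratic term $6W^2$ already contributes to degree-$2$ coefficients, hence to length-$3$ invariants; you are saved only because $\binom{4}{2}=6$ is even, which is the same binomial-coefficient phenomenon that handles degree $3$. Second, the appeals to additivity of Milnor invariants and to Lemma~\ref{lem:lh} do not help here: the $8$-move is performed at an arbitrary location in the tangle, not as a stacking with a fixed string link, and concordance invariance concerns the integral invariants with non-repeated indices, not the mod-$2$, length-$4$, repeated-index ones at stake.
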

\begin{proof}[Sketch of proof]
 The proof follows the very same lines as the proof of \cite[Prop.~3.4]{M-W-Y}. As outlined there, the $i$th longitude of $\sigma'$, for any $i$, is essentially obtained from that of $\sigma$ by inserting the $4$th power of some elements in the free group on the Wirtinger generators of the fundamental group $G(\sigma')$. The result then follows from the fact that for any word $w$ in the free group $F$, we have 
  $$ E(w^4) \equiv 1 + \textrm{(terms of degree $\ge 4$)} \pmod2.$$
 \noindent(This is an elementary property of the Magnus expansion,  which is to be compared with \cite[Claim~3.6]{M-W-Y}.)
\end{proof}

\section{Classifications results for  bottom tangles}\label{sec:classtangles} 

The purpose of this section is to give classification results for bottom tangles, or equivalently for string links, up to the three equivalence relations involved in our main result. 
This will in turn be used in the next two sections to tackle the link case. 

\subsection{Bottom tangles up to clasp-pass moves}

As mentioned in Example~\ref{Ex:Cn}, the equivalence relation on tangles generated by clasp-pass moves, is known to be equivalent to the $C_3$-equivalence defined by Habiro in \cite{Habiro}.
The classification of bottom tangles up to $C_3$-equivalence was given by the first author in \cite[Thm.~4.7]{Meilhan}: 
\begin{theo}\label{key-theorem1}
Two $n$-component bottom tangles are clasp-pass equivalent if and only if they share all invariants $a_2(i)$, $\mu(ij)$, $a_2(ij)$ and $\mu(ijk)$, for any $i,j,k~(i<j<k)$.
\end{theo}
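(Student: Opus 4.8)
The plan is to prove the two implications separately, the substance lying entirely in the sufficiency. The \lq only if\rq\ direction is immediate from Lemma~\ref{lem:C3inv}, which asserts precisely that each of $a_2(i)$, $\mu(ij)$, $a_2(ij)$ and $\mu(ijk)$ is a $C_3$-equivalence invariant; hence clasp-pass equivalent bottom tangles share them all. For the converse I would reduce every bottom tangle to a clasper \emph{normal form} built on the trivial tangle $\1$, and then match the normal forms of $\sigma$ and $\sigma'$ by reading the invariants off the variation formulas.

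First I would establish the normal form. By Example~\ref{Ex:Cn} any bottom tangle is surgery on $\1$ along a union of $C_1$-trees, and since surgery along any tree clasper of degree $\ge 3$ is $C_3$-trivial, up to $C_3$-equivalence it is surgery along a disjoint union of simple $C_1$- and $C_2$-trees. Using the clasper calculus of Lemma~\ref{lem:calculus} — separating trees by~(1), pushing edges and leaves across strands and other leaves via~(2) and~(3) (which only manufactures further $C_2$-trees), splitting and cancelling leaves by~(4) and~(5), and the node moves~(6) — I would reduce this union, up to $C_3$-equivalence, to a disjoint, unknotted, unlinked family of \emph{elementary generators}: $C_1$-trees of index $(i,j)$ (with $i\neq j$, recording the linking numbers) together with simple $C_2$-trees of indices $(i,i,i)$, $(i,i,j)$ and $(i,j,k)$. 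Self-crossing $C_1$-trees and the doubled $C_2$-trees produced along the way are disposed of by the same calculus together with Lemmas~\ref{basicmove2bis} and~\ref{basicmove2}, which either trivialize them or replace them by parallel pairs whose contribution is $2$-torsion.

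The variation formulas of Lemma~\ref{keylemma} then read the invariants off this normal form in a purely \lq diagonal\rq\ fashion, and crucially they are \emph{ambient-independent}: inserting an index-$(i,i,i)$ tree shifts only $a_2(i)$ by $\pm1$, an index-$(i,j,k)$ tree (distinct indices) shifts only $\mu(ijk)$ by $\pm1$, an index-$(i,i,j)$ tree shifts only $a_2(ij)$ by $\pm1$, and the $C_1$-trees record the $\mu(ij)$, each change being independent of the ambient tangle. Consequently the tuple $(a_2(i),\mu(ij),a_2(ij),\mu(ijk))$ is a diagonal linear function of the net generator counts in the normal form, and these counts are therefore recovered from the invariants. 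Now if $\sigma$ and $\sigma'$ share all the listed invariants, their shared $\mu(ij)$ force identical $C_1$-parts, while the diagonal formulas force identical net $C_2$-counts; since a generator and its inverse cancel up to $C_3$-equivalence (Lemma~\ref{lem:calculus}~(5)), the two normal forms coincide up to $C_3$-equivalence, whence $\sigma$ and $\sigma'$ are clasp-pass equivalent.

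The main obstacle is the completeness underlying this reduction: that no $C_3$-invariant survives beyond those listed. The delicate point is the status of $\mu(jiij)\bmod 2$, which is a genuine $C_3$-invariant by Lemma~\ref{lem:C3inv} yet is absent from the classifying list. Here parts~(ii) and~(iii) of Lemma~\ref{keylemma} must be used together: the index-$(i,i,j)$ generator shifts $a_2(ij)$ by $\pm1$ \emph{and} $\mu(jiij)$ by $1\bmod 2$ \emph{simultaneously}, so the two have the same variation and $\mu(jiij)\bmod 2$ is determined by $a_2(ij)$ up to a constant fixed by the linking numbers — this is the content behind Remark~\ref{remSL}, identifying $\mu(jiij)\bmod 2$ with the Sato–Levine invariant and hence with the mod-$2$ behaviour of $a_2$ of the plat closure. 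Carefully verifying this dependency, while simultaneously controlling the $2$-torsion contributed by the parallel $C_2$-trees and confirming that the reduction to elementary generators is genuinely complete, is where the real work of the argument lies.
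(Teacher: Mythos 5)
First, a point of comparison: the paper does not prove this statement at all --- it is imported verbatim as \cite[Thm.~4.7]{Meilhan}. Your normal-form strategy (reduce to surgery on $\1$ along simple $C_1$- and $C_2$-trees, sort by index, read off the invariants via Lemma~\ref{keylemma}) is indeed the standard clasper argument and is essentially how the cited reference proceeds, so the architecture is right. The ``only if'' direction via Lemma~\ref{lem:C3inv} is fine.

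There are, however, two concrete problems in the ``if'' direction as you have written it. First, you dispose of the junk terms using Lemmas~\ref{basicmove2bis} and~\ref{basicmove2}; but Lemma~\ref{basicmove2bis} and case~(iii) of Lemma~\ref{basicmove2} are only $C_3$-\emph{concordance} statements, not $C_3$-equivalences. Any step of the reduction that invokes them downgrades the whole conclusion to $C_3$-concordance, i.e.\ to band-pass equivalence (Lemma~\ref{band-pass$=C3+c$}), under which $a_2(i)$ survives only mod~$2$ and $a_2(ij)$ is not controlled at all --- exactly the information Theorem~\ref{key-theorem1} is supposed to detect. A correct reduction must stay within Lemma~\ref{lem:calculus} (whose correction terms, after splitting leaves by~(4), can be kept simple), so these doubled-tree lemmas are both unnecessary and fatal here; they are also the wrong tool for the self $C_1$-trees of index $(i,i)$, which are $C_1$-trees, not doubled $C_2$-trees, and which must instead be converted into index-$(i,i,i)$ and index-$(i,i,j)$ generators. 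Second, the claimed ``diagonal'' structure is false: an index-$(i,i,i)$ tree does \emph{not} shift only $a_2(i)$ --- under the plat closure $p_{ij}$ its three leaves all survive on the resulting knot, so it also shifts $a_2(ij)$ by $\pm1$ for every $j\neq i$ (note that Lemma~\ref{keylemma}~(ii) is stated only for $C_{iij}$ with $i<j$ and says nothing about $C_{iii}$ versus $a_2(rs)$). The system relating generator counts to invariants is therefore only triangular; the argument can be repaired by first matching the $a_2(i)$ and $\mu(ij)$, then the $a_2(ij)$, then the $\mu(ijk)$, but as stated the ``counts are recovered from the invariants'' step does not go through. Finally, the completeness of the reduction to elementary generators and the realization of all sign choices --- which you yourself defer --- is the actual content of \cite[Thm.~4.7]{Meilhan} and cannot be left as an assertion.
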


\subsection{Bottom tangles up to band-pass moves}

The equivalence relation generated by band-pass moves, turns out to also be intimately related to Habiro's notion of $C_3$-equivalence. More precisely, we have the following, which will be used implicitly throughout the rest of this paper. 
\begin{lem}\label{band-pass$=C3+c$}
Two tangles are band-pass equivalent if and only if they are $C_3$-concordant.
\end{lem}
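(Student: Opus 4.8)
The plan is to prove the two implications of Lemma~\ref{band-pass$=C3+c$} separately, leveraging the reformulation of the band-pass move in terms of claspers given in Example~\ref{Ex:Cn}. Recall that there a band-pass move is expressed as surgery along a specific configuration of claspers. The key point is that this configuration is exactly of the type controlled by Lemmas~\ref{basicmove2bis} and~\ref{basicmove2}, which relate doubled $C_2$-trees to pairs of parallel $C_2$-trees and to the trivial tangle, up to $C_3$-concordance.

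For the implication that band-pass equivalence implies $C_3$-concordance, I would start from the clasper picture of a single band-pass move from Example~\ref{Ex:Cn}. The defining feature of a band-pass (as opposed to an ordinary pass-move) is precisely that each pair of parallel strands belongs to a single component, so that the relevant clasper is a \emph{doubled} $C_2$-tree in the sense of Definition~\ref{def:doubled}. Depending on the orientations, this is a doubled $C_2$-tree of either parallel or antiparallel type. In the antiparallel case, Lemma~\ref{basicmove2} tells us directly that surgery is $C_3$-concordant to doing nothing, so that portion of the band-pass move changes nothing up to $C_3$-concordance. In the parallel case, Lemma~\ref{basicmove2bis} replaces the doubled tree, up to $C_3$-concordance, with a pair of parallel $C_2$-trees; and surgery along such a pair of parallel $C_2$-trees is itself realizable by $C_2$-moves, hence \emph{a fortiori} keeps us within a single $C_3$-concordance class once we account for the parallel-pair structure. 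Either way, a single band-pass move is seen to be a $C_3$-concordance, and composing finitely many such moves gives the claimed implication.

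For the converse, that $C_3$-concordance implies band-pass equivalence, I would show separately that (a) $C_3$-equivalence implies band-pass equivalence, and (b) concordance implies band-pass equivalence. For (a) it suffices to realize surgery along a single $C_3$-tree (equivalently, a clasp-pass move, by Example~\ref{Ex:Cn}) by band-pass moves; since the introduction already records that a clasp-pass move is a special case of a band-pass move, this direction is immediate and in fact clasp-pass equivalence implies band-pass equivalence outright. For (b), I would invoke Shibuya's result, cited in the introduction, that concordant links are self-pass equivalent, together with the evident fact that a self-pass move (strands of a single component) is a special band-pass move; alternatively, one reduces concordance to a sequence of band-pass moves directly at the level of tangles. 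Combining (a) and (b) yields that $C_3$-concordance implies band-pass equivalence.

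The main obstacle, and the step deserving the most care, is making precise that the local clasper picture of the band-pass move in Example~\ref{Ex:Cn} is genuinely a doubled $C_2$-tree of the type covered by Lemmas~\ref{basicmove2bis} and~\ref{basicmove2}, and bookkeeping the two type cases (parallel versus antiparallel) together with the possible half-twist ambiguities flagged in Figure~\ref{double-arc}. One must check that no matter how the two same-component strands are oriented relative to each other, the resulting doubled leaf falls under one of the two lemmas, and that the residual pair-of-parallel-$C_2$-trees term in the parallel case does not escape the band-pass class. I expect this case analysis, rather than any deep new input, to be where the real content of the proof lies; the concordance direction, by contrast, is essentially a citation of Shibuya's theorem combined with the trivial inclusion of clasp-pass moves among band-pass moves.
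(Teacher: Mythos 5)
Your ``if'' direction ($C_3$-concordance implies band-pass equivalence) is exactly the paper's argument: Shibuya's theorem reduces concordance to self-pass moves, and both self-pass and clasp-pass moves are special cases of band-pass moves.

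The ``only if'' direction, however, has a genuine gap, rooted in a misidentification of the clasper realizing a band-pass move. That clasper (Example~\ref{Ex:Cn}) is a \emph{$C_1$-tree} --- degree $1$, two leaves, no nodes --- each of whose \emph{two} leaves intersects the tangle at two points of a single component. It is therefore not a doubled $C_2$-tree in the sense of Definition~\ref{def:doubled}, which has three leaves exactly one of which is doubled, so Lemmas~\ref{basicmove2bis} and~\ref{basicmove2} do not apply to it. Moreover, even granting that reading, the parallel-type branch of your argument does not close: Lemma~\ref{basicmove2bis} would only tell you that the surgered tangle is $C_3$-concordant to the result of surgery along a pair of parallel $C_2$-trees, and such a surgery is in general \emph{not} trivial up to $C_3$-concordance --- by Lemma~\ref{keylemma}~(iv) it can shift $\mu(ijk)$ by $\pm 2$, and $\mu(ijk)$ is both a concordance and a $C_3$-equivalence invariant. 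So your argument would not show that a single band-pass move preserves the $C_3$-concordance class. What the paper actually does is different: it splits one doubled leaf of the $C_1$-tree via Lemma~\ref{lem:calculus}~(4) into two simple leaves $f$, $f'$ that cobound a subarc of the tangle, slides $f$ along that arc until the two resulting $C_1$-trees form a parallel pair cancelling by Lemma~\ref{lem:calculus}~(5), and controls the correction terms created by the sliding via Lemma~\ref{lem:calculus}~(1)--(3). Those corrections are trees of degree $\ge 3$ or doubled $C_2$-trees of \emph{antiparallel} type, and it is only there --- to delete these antiparallel correction terms up to $C_3$-concordance --- that Lemma~\ref{basicmove2} enters. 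This split--slide--cancel step is the real content of the lemma and is missing from your proposal.
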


\begin{proof}
As mentioned in the introduction, Shibuya showed in \cite{Shibuya} that  
two concordant tangles are related by self-pass moves. 
 Hence two $C_3$-concordant tangles are related by a sequence of self-pass moves and clasp-pass moves. Since these two local moves are realized by a band-pass move,  we have the \lq if' part of the statement. \\
The \lq only if' part is shown using clasper calculus. As noted in Example~\ref{Ex:Cn}, two tangles that differ by a band-pass move, are related by surgery along a $C_1$-tree as shown on the left-hand side of Figure~\ref{band-pass implies C3+c}. 
The first equivalence in the figure follows from Lemma \ref{lem:calculus}~(4). Note that the two leaves denoted by $f$ and $f'$ in the figure cobound a sub-arc of the tangle: we can thus slide, say, $f$ along this sub-arc until it is adjacent to $f'$, and so that the two $C_1$-trees are parallel as shown on the right-hand side of Figure~\ref{band-pass implies C3+c}. 
\begin{figure}[!h]
\includegraphics{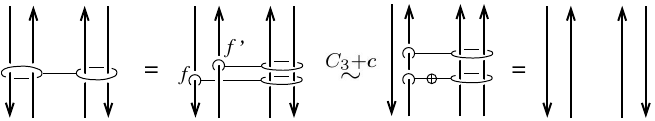} 
\caption{A band-pass move is realized by $C_3$-concordance}\label{band-pass implies C3+c}
\end{figure}

\noindent There are several obstructions to this sliding: we may have to slide the leaf $f$ across some other clasper leaf, and to pass its adjacent edge across some strand of the tangle or some clasper edge. But by Lemma~\ref{lem:calculus}~(1)-(3), such operation introduce new claspers which are all either of degree $\ge 3$ or are doubled $C_2$-trees of antiparallel type. 
According to Lemma \ref{basicmove2},  all such extra claspers can be removed up to $C_3$-concordance.
Note that we may also have to pass $f$ across the other leaf of the doubled $C_1$-tree containing $f$; but this is achieved by a mere isotopy, as illustrated in Figure~\ref{fig:doubledC1}.\footnote{The figure only shows the proof in the untwisted case; the proof in the case of a positive or negative half-twist is strictly similar.}
\begin{figure}[!h]
\includegraphics{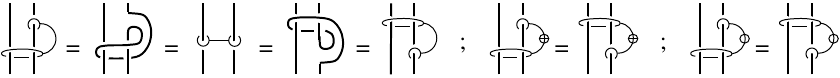} 
\caption{Exchanging the two leaves of a doubled $C_1$-tree is achieved by isotopy. }\label{fig:doubledC1}
\end{figure}

\noindent The result follows, since the two parallel $C_1$-trees shown on the right-hand side of Figure~\ref{band-pass implies C3+c} can be removed by Lemma~\ref{lem:calculus}~(5).
\end{proof}

We deduce the following classification result, which follows from the above lemma, combined with the classification of bottom tangles up to $C_3$-concordance given by the authors in \cite[Thm.~5.8]{MY}. 

\begin{theo}\label{key-theorem2}
Two $n$-component bottom tangles are band-pass equivalent if and only if they share all invariants $a_2(i) \pmod2$, $\mu(ij)$, $\mu(jiij) \pmod2$ and $\mu(ijk)$, for any $i,j,k~(i<j<k)$.
\end{theo}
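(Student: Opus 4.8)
The plan is to reduce the statement to a classification already available in the literature, namely that of bottom tangles up to $C_3$-concordance. By Lemma~\ref{band-pass$=C3+c$}, two bottom tangles are band-pass equivalent if and only if they are $C_3$-concordant, so it suffices to prove that $C_3$-concordance is detected precisely by the four families of invariants $a_2(i)\pmod2$, $\mu(ij)$, $\mu(jiij)\pmod2$ and $\mu(ijk)$ $(i<j<k)$. This is the content of \cite[Thm.~5.8]{MY}; the real work of the present proof is therefore to invoke that result and to reconcile its classifying invariants with those in the statement.

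For the \emph{only if} direction I would check directly that each listed quantity is an invariant of $C_3$-concordance, that is, is unchanged both under $C_3$-equivalence and under concordance. The Milnor invariants $\mu(ij)$ and $\mu(ijk)$ are concordance invariants by Lemma~\ref{lem:lh} and $C_3$-equivalence invariants by Lemma~\ref{lem:C3inv}; the same two lemmas show that $\mu(jiij)\pmod2$ is a $C_3$-concordance invariant. Finally $a_2(i)\pmod2$ is the Arf invariant of the knot $\widehat{\sigma_i}$, hence a concordance invariant, and it is a $C_3$-equivalence invariant by Lemma~\ref{lem:C3inv}. This already explains, conceptually, how the list differs from the clasp-pass classification of Theorem~\ref{key-theorem1}: passing to concordance weakens the integral invariant $a_2(i)$ to its mod~$2$ reduction, and replaces $a_2(ij)$ by $\mu(jiij)\pmod2$.

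For the \emph{if} direction the substantive input is \cite[Thm.~5.8]{MY}, which I would apply verbatim. The only genuine step on our side is to confirm that the classifying invariants used there coincide with ours. The delicate point is the replacement of the non-concordance invariant $a_2(ij)$ of Theorem~\ref{key-theorem1} by $\mu(jiij)\pmod2$: the bridge between the two is exactly the Sato--Levine computation recorded in Remark~\ref{remSL}, which identifies $\mu(jiij)\pmod2$ with the $C_3$-concordance residue that survives from the finer invariant $a_2(ij)$.

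I expect the main obstacle to lie not in the logical structure of the present theorem, which is a formal consequence of Lemma~\ref{band-pass$=C3+c$} and \cite[Thm.~5.8]{MY}, but in making the translation between the two invariant systems faithful: one must verify that no information is lost or duplicated when passing from the $C_3$-concordance invariants of \cite{MY} to the four families above, and that these four families are genuinely independent. All the conceptual content --- that concordance kills the integral parts of $a_2(i)$ and $a_2(ij)$ while retaining their mod-$2$ (Arf, respectively Sato--Levine) shadows --- is already carried by Lemma~\ref{band-pass$=C3+c$} together with the variation and invariance lemmas established above.
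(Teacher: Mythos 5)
Your proposal is correct and follows essentially the same route as the paper: the paper also deduces Theorem~\ref{key-theorem2} directly by combining Lemma~\ref{band-pass$=C3+c$} (band-pass equivalence coincides with $C_3$-concordance) with the classification of bottom tangles up to $C_3$-concordance in \cite[Thm.~5.8]{MY}. Your additional remarks on verifying the invariance of each quantity and on the role of the Sato--Levine identification are consistent with the surrounding lemmas and do not change the argument.
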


\subsection{Bottom tangles up to band-$\shp$ moves}

Recall from the introduction that for any pair of distinct indices $i,j$, we set $\varphi(ij)\equiv 4\mu(jiij)+\mu(ij) \pmod8$. 
Our third classification result for bottom tangles reads as follows. 
\begin{theo}\label{key-theorem3}
Two $n$-component bottom tangles are band-$\shp$ equivalent if and only if they share all invariants $\mu(ij) \pmod4$, $\varphi(ij) \pmod8$ and $\mu(ijk) \pmod2$, for any $i,j,k~(i<j<k)$.
\end{theo}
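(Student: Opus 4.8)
My plan is to prove Theorem~\ref{key-theorem3} relative to the band-pass classification of Theorem~\ref{key-theorem2}, using that a band-pass move is realized by band-$\shp$ moves, so that band-pass equivalence refines band-$\shp$ equivalence. The band-pass invariants $a_2(i)\bmod2$, $\mu(ij)$, $\mu(jiij)\bmod2$ and $\mu(ijk)$ form a complete set for the finer relation, and the three invariants in the statement are quotients of these; indeed $\mu(ij)\bmod4$ is already implied by $\varphi(ij)\bmod8$, since $4\mu(jiij)\equiv0\bmod4$. Hence the entire content reduces to determining exactly which changes of the band-pass invariants are realizable by band-$\shp$ moves. To carry out the relevant computations I would, following Example~\ref{Ex:Cn}, write each band-$\shp$ move as surgery along a clasper and reduce it up to $C_3$-concordance — which preserves every band-pass invariant — to a standard disjoint union of $C_1$- and $C_2$-trees by means of Lemma~\ref{lem:calculus} and Lemmas~\ref{basicmove2bis}--\ref{basicmove2}, so that its effect can be read off from the variation formulas of Lemma~\ref{keylemma} and from Lemma~\ref{lem:milnorparallel}.

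For the \lq if\rq\ direction I would first produce three families of band-$\shp$ moves realizing the generators of the subgroup of admissible changes: a self-band-$\shp$ move supported in a ball meeting a single component $i$, changing $a_2(i)$ by an odd integer and nothing else; a band-$\shp$ move producing a pair of parallel $C_2$-trees of index $(i,j,k)$, which by Lemma~\ref{lem:milnorparallel} fixes all invariants modulo $2$ while changing $\mu(ijk)$ by $2$; and a band-$\shp$ move changing $\mu(ij)$ by $4$ and flipping $\mu(jiij)\bmod2$ simultaneously, all other invariants being preserved. Checking that each move is clean, in the sense that it alters only its target invariant, is done via Lemma~\ref{keylemma}. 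Given $\sigma$ and $\sigma'$ sharing $\mu(ij)\bmod4$, $\varphi(ij)\bmod8$ and $\mu(ijk)\bmod2$, the differences of their band-pass invariants then satisfy precisely the relations saying that they lie in the subgroup generated by these moves, so I may apply suitable band-$\shp$ moves to $\sigma'$ to match all band-pass invariants of $\sigma$. Theorem~\ref{key-theorem2} then gives band-pass equivalence, hence band-$\shp$ equivalence, and composing with the moves already applied finishes this direction.

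For the \lq only if\rq\ direction I would show that a single band-$\shp$ move changes the band-pass invariants only within this subgroup. Decomposing it up to $C_3$-concordance as above and applying Lemma~\ref{keylemma}, the change of $\mu(ijk)$ is even and the change of $\mu(ij)$ is a multiple of $4$, so that $\mu(ijk)\bmod2$ and $\mu(ij)\bmod4$ are preserved. The delicate point is $\varphi(ij)=4\mu(jiij)+\mu(ij)\bmod8$: I must show that whenever a band-$\shp$ move changes $\mu(ij)$ by $4s$ it changes $\mu(jiij)$ by $s\bmod2$, so that $4\mu(jiij)+\mu(ij)$ is unchanged mod $8$. This is where Lemma~\ref{lem:8move} enters, since the linking-number-changing part of the move is modelled on $8$-moves, which change $\mu(ij)$ by $4$ while preserving every $\mu(I)\bmod2$ of length $\le4$; the complementary $C_2$-tree of index $(i,i,j)$ flips $\mu(jiij)\bmod2$ in lockstep by Lemma~\ref{keylemma}~(iii).

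The main obstacle is exactly this mod-$8$ bookkeeping for $\varphi$: simultaneously tracking a length-$2$ and a length-$4$ Milnor invariant under one band-$\shp$ move and proving the precise coupling $\Delta\mu(jiij)\equiv s\bmod2$ when $\Delta\mu(ij)=4s$. Correspondingly, constructing the realization move in the \lq if\rq\ direction with exactly this coupled and clean effect is the technical heart of the argument; the remaining generators and the purely algebraic bookkeeping are routine once Lemmas~\ref{keylemma}, \ref{lem:milnorparallel} and \ref{lem:8move} are available.
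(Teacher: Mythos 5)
Your proposal follows essentially the same route as the paper: the \lq only if\rq~direction is obtained by decomposing a single band-$\shp$ move, up to $C_3$-concordance, into an $8$-move, a Whitehead-type $C_2$-tree and pairs of parallel $C_2$-trees, tracking the coupled change of $\mu(ij)$ and $\mu(jiij)$ via Lemmas~\ref{keylemma}, \ref{lem:milnorparallel} and \ref{lem:8move}, while the \lq if\rq~direction matches the band-pass invariants of Theorem~\ref{key-theorem2} by explicit band-$\shp$ moves, exactly as in the paper's construction of $\sigma^{(1)}$ and $\sigma^{(2)}$. The one caveat is that you call the remaining generators \lq routine\rq: realizing a pair of parallel $C_2$-trees of index $(i,j,k)$ by band-$\shp$ moves is precisely the nontrivial content of Lemma~\ref{triple linking}, which requires Figure~\ref{yy} and the $\bps$-move argument of Appendix~\ref{Appendix}, so this generator deserves as much care as the coupled $\mu(ij)$/$\mu(jiij)$ move you rightly single out.
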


The rest of this section is devoted to the proof of Theorem \ref{key-theorem3}. 

\subsubsection{Proof of the \lq only if\rq~ part of Theorem \ref{key-theorem3}}\label{sec:onlyif}
 
The fact that the mod 4 linking number $\mu(ij)$ is invariant under a band-$\shp$ moves is clear from the definition, hence we only need to prove the invariance of $\varphi(ij)\pmod8$ and $\mu(ijk) \pmod2$. It is enough to
consider the case where $\sigma'$ is obtained from $\sigma$ by a single band-$\shp$ move, 
which is applied on the $s$th and $t$th components of $\sigma$, possibly with $s=t$. 

Suppose first  that $s\neq t$. Without loss of generality we can assume that $\sigma'$ is obtained from $\sigma$ by surgery along the $C_1$-tree illustrated on the left-hand side of Figure \ref{x}.
We prove below the following : 
\begin{equation}
\sigma' \stackrel{C_3+c}{\sim} \left( \sigma\right)_{I\cup Y\cup F'},  \tag{\textasteriskcentered}
\end{equation}
where $I$ is a union of four parallel $C_1$-trees of index $(s,t)$ and $Y$ is a single $C_2$-tree of index $(s,t,t)$ as shown on the right-hand side of Figure \ref{x}, and $F'$ is a union of pairs of parallel $C_2$-trees. 
\begin{figure}[!h]
\includegraphics[scale=0.92]{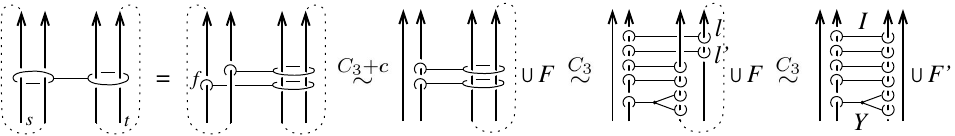} 
\caption{Here $F$ and $F'$ are unions of pairs of parallel $C_2$-trees, that are disjoint from a $3$-ball containing the depicted part. }\label{x}
\end{figure}

\noindent The proof of Equation ($\textasteriskcentered$) is illustrated in Figure \ref{x}. 
The first equivalence in the figure follows from Lemma \ref{lem:calculus}~(4). The second equivalence is given by Lemma \ref{lem:calculus}~(1)-(3) by sliding the leaf labeled $f$ along component $s$, following the orientation; this sliding creates extra terms, namely a union of doubled $C_2$-trees of parallel type, which we may replace with pairs of parallel $C_2$-trees up to $C_3$-concordance using Lemma \ref{basicmove2bis}.
The next equivalence in the figure is given by Lemma \ref{lem:calculus}~(4), followed by Lemma \ref{lem:calculus}~(1)-(3). Finally, we can slide the two leaves labeled by $l$ and $l'$ in the figure along component $t$, following the orientation, until we obtain the union $I\cup Y$ shown on the right-hand side; again, by Lemma \ref{lem:calculus}~(1)-(3) this sliding may introduce further pairs of parallel $C_2$-trees, and Equation ($\textasteriskcentered$) follows. 
We deduce the \lq only if\rq~ part of Theorem \ref{key-theorem3} in the case $s\neq t$, as follows 
(here we pick any  $i,j,k~(i<j<k)$): 
\begin{itemize}
 \item Surgery along $I$ is by definition an $8$-move, so by Lemma \ref{lem:8move} we have that $\mu(jiij)\pmod2$ is preserved. Moreover, this surgery changes  $\mu(ij)$ by $\pm 4 \delta_{\{i,j\},\{s,t\}}$; hence we have that $\varphi(ij)$ changes by $4 \delta_{\{i,j\},\{s,t\}}\pmod8$ under surgery along $I$.\\
Lemma \ref{lem:8move} also directly gives  that $\mu(ijk)\pmod2$ is preserved by surgery along $I$. 
\item  Surgery along $Y$  is a connected sum of a copy of the Whitehead link (Figure~\ref{fig:exclasp}), which is link-homotopic to the unlink. Hence by Lemma~\ref{lem:lh} the invariants $\mu(ij)$ and $\mu(ijk)$ are preserved by surgery along $Y$. 
On the other hand, by Lemma~\ref{keylemma}~(iii), surgery along $Y$ changes $\mu_{\sigma}(jiij)$ by $\pm \delta_{\{i,j\},\{s,t\}}$. 
Hence overall surgery along $I\cup Y$ preserves  $\varphi(ij)\pmod8$.
\item
By Lemma \ref{lem:milnorparallel}, surgery along a pair of parallel $C_2$-trees preserves the invariants $\mu_{\sigma}(jiij)\pmod2$ and $\mu_{\sigma}(ijk)\pmod2$. Since the linking numbers $\mu(ij)$ are invariants of $C_2$-equivalence \cite{Habiro}, we have that $\varphi(ij)\pmod8$ is also invariant.
\end{itemize}

The case $s=t$ is for a large part similar. Indeed, we still have Equation ($\textasteriskcentered$), by a similar argument as the one of Figure \ref{x}. The only difference is that the four depicted strands all belong to the same tangle component, and may be met in various orders when running along this component (in other words, we now ignore the dotted parts in the figure). The deformation of Figure \ref{x} can nonetheless be adapted to each of these cases, using Lemmas \ref{lem:calculus}~(1)-(3) and \ref{basicmove2bis}.\footnote{We may also have to use the equivalence illustrated in  Figure \ref{fig:doubledC1} during the deformation.}
Hence it suffices to discuss how surgery along $I\cup Y\cup F'$, in the case $s=t$, affects the invariants $\mu(ij)$, $\varphi(ij)\pmod8$ and $\mu(ijk)\pmod2$ ($1\leq i<j<k\leq n$): 
\begin{itemize}
 \item Surgery along $I$ is achieved by self-crossing changes, hence leaves the linking numbers $\mu(ij)$ unchanged. 
On the other hand, this surgery being an $8$-move, we still have by Lemma \ref{lem:8move} that $\mu(jiij)\pmod2$ and $\mu(ijk)\pmod2$ are preserved. 
\item  Since $Y$ is a $C_2$-tree of index $(s,s,s)$, it follows from \cite[Thm.~2.1]{FY} that it preserves all Milnor invariant indexed by a sequence $I$ such that each index appears at most twice, hence in particular preserves $\mu(ij)$, $\mu(jiij)$ and $\mu(ijk)$. 
\item
As in the case $s\neq t$ above, Lemma \ref{lem:milnorparallel} ensures that  surgery along $F'$ preserves the invariants $\varphi(ij)\pmod8$ and $\mu_{\sigma}(ijk)\pmod2$. 
\end{itemize}
This concludes the proof of the \lq only if\rq~ part of Theorem \ref{key-theorem3}. 

\subsubsection{Proof of the \lq if\rq~ part of Theorem \ref{key-theorem3}.}

Let $\sigma$ and $\sigma'$ be two $n$-component bottom tangles having same invariants $\mu(ij) \pmod4$, $\varphi(ij) \pmod8$ and $\mu(ijk) \pmod2$, for any $i,j,k~(i<j<k)$.

Since a band-$\shp$ move involving components $i$ and $j$ changes the linking number $\mu(ij)$ by $\pm 4$, and preserves all other linking numbers, we can construct a bottom tangle $\sigma^{(1)}$ which is band-$\shp$ equivalent to $\sigma'$, and such that $\mu_{\sigma}(ij)= \mu_{\sigma^{(1)}}(ij)$ for all indices $i,j$. 
Moreover, recall that the (band-)$\shp$ move is an unknotting operation for knots \cite[Lem.~3.1]{Murakami}. 
Hence we may safely assume that for each $i$, the $i$th component of the bottom tangle $\sigma^{(1)}$  is isotopic to the $i$th component of $\sigma$. In particular, we thus have that  $a_2(\sigma;i)= a_2(\sigma^{(1)};i)$ for each $i$. 
Using the \lq only if\rq~ part of Theorem \ref{key-theorem3}, proved in Section \ref{sec:onlyif}, we also have that 
$\mu_{\sigma}(jiij)\equiv\mu_{\sigma^{(1)}}(jiij)\pmod2$ and 
$\mu_{\sigma}(ijk)\equiv\mu_{\sigma^{(1)}}(ijk)\pmod2$  for any $i,j,k~(i<j<k)$.

The proof of the following lemma is postponed to the end of this section. 
\begin{lem}\label{triple linking}
Let $L$ be an $n$-component bottom tangle; let $\varepsilon\in\{-1,1\}$ and $p,q,r\in \{1,\cdots,n\}~(1\leq p<q<r\leq n)$.   
There exists a bottom tangle $L'$ such that 
\begin{itemize}
\item $L'$ is  band-$\shp$ equivalent to $L$;
\item $\mu_{L'}(pqr)-\mu_{L}(pqr)=2\varepsilon$;
\item $L'$ and $L$ share all invariants $a_2(i) \pmod2$, $\mu(ij)$, $\mu(jiij) \pmod2$ and $\mu(ijk)$, for any 
$i,j,k~(i<j<k)$ and $\{i,j,k\}\neq\{p,q,r\}$.
\end{itemize}
\end{lem}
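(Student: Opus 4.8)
The plan is to realize the desired change in the triple linking number $\mu(pqr)$ by surgery along a single, carefully chosen clasper, and then to check that this surgery is achievable by band-$\shp$ moves while leaving all the other listed invariants unchanged. The natural candidate is a simple $C_2$-tree $C$ of index $(p,q,r)$: by Lemma~\ref{keylemma}~(iv), surgery along $C^{\varepsilon}$ changes $\mu(rst)$ by $\varepsilon\delta_{\{p,q,r\},\{r,s,t\}}$, so it modifies $\mu(pqr)$ by exactly $\varepsilon$ and preserves all other triple linking numbers. This, however, gives a change of $\pm 1$, whereas the lemma demands a change of $2\varepsilon$. So I would instead take $L'=L_{P}$, where $P$ is a \emph{pair of parallel} $C_2$-trees, both of index $(p,q,r)$ and carrying the sign $\varepsilon$. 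Surgery along $P$ then changes $\mu(pqr)$ by $2\varepsilon$, which is precisely the required value.

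First I would verify that this choice preserves the other invariants. By Lemma~\ref{lem:milnorparallel}, surgery along a pair of parallel $C_2$-trees preserves $a_2(i)\pmod2$, $\mu(ijk)\pmod2$ and $\mu(jiij)\pmod2$ for all indices. Since a $C_2$-tree preserves linking numbers (these are $C_2$-equivalence invariants, as noted in the excerpt via \cite{Habiro}), the $\mu(ij)$ are unchanged. For the triple linking numbers with $\{i,j,k\}\neq\{p,q,r\}$, Lemma~\ref{keylemma}~(iv) shows each of the two parallel $C_2$-trees contributes $\varepsilon\delta_{\{p,q,r\},\{i,j,k\}}=0$, so these $\mu(ijk)$ are preserved \emph{exactly} (not just mod~2); the lemma's third bullet asks only that the listed invariants agree, so this is more than enough. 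Thus all the required invariants are preserved, and $\mu(pqr)$ changes by exactly $2\varepsilon$.

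The remaining, and genuinely substantive, point is to show that surgery along the pair $P$ of parallel $C_2$-trees is realized by band-$\shp$ moves. Here the key is the reformulation in Example~\ref{Ex:Cn}, together with Lemma~\ref{band-pass$=C3+c$}, which identifies band-pass equivalence with $C_3$-concordance; surgery along a single $C_2$-tree is a $\Delta$-move and hence generally \emph{not} band-$\shp$, but a pair of parallel $C_2$-trees can be organized into a band-$\shp$ move. Concretely, I would argue that a pair of parallel $C_2$-trees of the same index and sign, having all leaves arranged in parallel, can be rewritten (using the clasper calculus of Lemma~\ref{lem:calculus}, in particular the parallel-leaf configuration governed by Lemma~\ref{basicmove2bis} and Definition~\ref{def:parallel}) into the local form of a band-$\shp$ move displayed in Example~\ref{Ex:Cn}. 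The parallelism of the two trees is exactly what guarantees that each pair of strands entering the move belongs to a single component, as required for the \lq band\rq\ condition.

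I expect the main obstacle to be precisely this last step: producing an honest band-$\shp$ realization of the parallel pair $P$, rather than merely a $C_3$-equivalence. One must track the \emph{framings/half-twists} and the component-labels of the leaves throughout the clasper manipulations, ensuring the two parallel copies combine into the specific tangle of Example~\ref{Ex:Cn} with strands correctly paired by component. A clean way to handle this is to first build $P$ explicitly as a band sum with two copies of the relevant Borromean-type tangle and then invoke the Murakami--Nakanishi observation \cite[Fig.~A.7]{M-N} (cited in the introduction) that band-pass moves are achieved by band-$\shp$ moves; combined with Lemma~\ref{band-pass$=C3+c$}, this reduces the realization to exhibiting $P$ as a band-pass move, which the parallel structure makes transparent.
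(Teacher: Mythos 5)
Your construction coincides with the paper's: you take $L'=L_P$ where $P$ is a pair of parallel $C_2$-trees of index $(p,q,r)$, and your verification of the invariant statements (Lemma~\ref{keylemma}~(iv) for the triple linking numbers, Lemma~\ref{lem:milnorparallel} for $a_2(i)$ and $\mu(jiij)$ mod $2$, and $C_2$-equivalence invariance of the linking numbers) matches the paper's argument. The gap lies in the step you yourself identify as the substantive one. You propose to realize surgery along $P$ by band-$\shp$ moves by ``exhibiting $P$ as a band-pass move'' and then invoking the Murakami--Nakanishi trick together with Lemma~\ref{band-pass$=C3+c$}. This cannot work: band-pass equivalence coincides with $C_3$-concordance, and the integer-valued invariant $\mu(pqr)$ is preserved by $C_3$-concordance (Lemmas~\ref{lem:lh} and \ref{lem:C3inv}; equivalently, it appears in the classification of Theorem~\ref{key-theorem2}). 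Since surgery along $P$ changes $\mu(pqr)$ by $2\varepsilon\neq 0$, the tangles $L_P$ and $L$ are \emph{not} band-pass equivalent, so no amount of clasper calculus will present $P$ as a band-pass move, nor as a $C_3$-concordance followed by the local band-$\shp$ picture of Example~\ref{Ex:Cn}.

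The paper's resolution (Figure~\ref{yy}) is to deform $P$, using Lemma~\ref{lem:calculus}~(4) and Habiro's move 9, into a configuration whose surgery is a \emph{$\bps$ move} in the sense of Appendix~\ref{Appendix}: a $\ps$-move in which each pair of parallel strands lies on a single component. This move differs from the band-pass move precisely in the relative orientation of one pair of parallel strands --- which is exactly why it preserves $\mu(ijk)$ only modulo $2$ (Theorem~\ref{key-theorem4}) and is therefore compatible with a jump of $2\varepsilon$ --- and the Murakami--Nakanishi argument of Figure~\ref{fig:gamm} shows that a $\bps$ move is realized by two band-$\shp$ moves. Your instinct that the orientations of the parallel strands must be tracked carefully was correct; the repair is that they force you to aim for the $\ps$-type local move rather than the pass-type one.
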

Using Lemma~\ref{triple linking}, we can construct a bottom tangle $\sigma^{(2)}$, which is  band-$\shp$ equivalent to $\sigma^{(1)}$, hence to $\sigma'$, and such that $\sigma^{(2)}$ and $\sigma$ share all invariants $a_2(i)\pmod2$, $\mu(ij)$, $\mu(jiij) \pmod2$ and $\mu(ijk)$, for any $i,j,k~(i<j<k)$. 

 Theorem~\ref{key-theorem2} then implies that $\sigma^{(2)}$ and $\sigma$ are band-pass equivalent.
Since a band-pass move is a composition of band-$\shp$ moves,  we have that  $\sigma'$ and $\sigma$ are band-$\shp$ equivalent. 
Hence it only remains to prove the above lemma to complete the proof of Theorem~\ref{key-theorem3}.

\begin{proof}[Proof of Lemma \ref{triple linking}]
Consider a pair $P$ of parallel $C_2$-trees for the bottom tangle $L$, with index $(p,q,r)$ as on the left-hand side of Figure \ref{yy}.
The figure illustrates the fact that $L_P$ and $L$ are band-$\shp$ equivalent, as follows. 
The first equivalence is showed by an isotopy and Lemma~\ref{basicmove2bis}, 
and the second equivalence is given by Habiro's move 9. Now, surgery along the $C_1$-tree $C$ shown in the figure, is equivalent to a $\bps$ move, 
which is a local move defined in Appendix~\ref{Appendix}, which differ from the band-pass and band-$\shp$  moves by the relative orientation of two parallel strands. It is shown in Appendix~\ref{Appendix}, that a  $\bps$ move yields band-$\shp$ equivalent tangles. 
The last equivalence in Figure \ref{yy} is then given by clasper surgery and isotopy.
\begin{figure}[!h]
\includegraphics[scale=1.1]{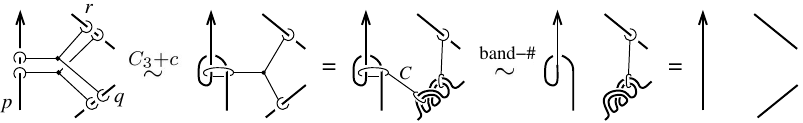} 
\caption{Surgery along a pair of parallel $C_2$-trees, yields band-$\shp$ equivalent tangles.}\label{yy}
\end{figure}
Since the $C_3$-concordance is generated by band-$\shp$ moves, as follows from Lemma \ref{band-pass$=C3+c$} and \cite[Fig.~A.7]{M-N}, we indeed have that $L_P$ and $L$ are band-$\shp$ equivalent.

Let us now consider how surgery along $P$ affects the invariants of the statement. 
On the one hand, by Lemma \ref{keylemma}~(iv) we have that $\mu_{L_P}(pqr)-\mu_{L}(pqr)=\pm 2$, depending on the orientation of the strands, while all other length $3$ Milnor invariants are preserved. 
On the other hand, $L_P$ and $L$ clearly have same linking numbers $\mu(ij)$, since these are $C_2$-equivalence invariants. 
The fact that they also share the invariants $a_2(i) \pmod2$ and $\mu(jiij) \pmod2$, is a direct application of Lemma \ref{lem:milnorparallel}. 
\end{proof}

\section{From bottom tangles to links}\label{sec:Markov}

The purpose of this section is to prove a \lq Markov-type result\rq, namely Theorem \ref{Markov}, which gives necessary and sufficient conditions for two bottom tangles to have clasp-pass, band-pass or band-$\shp$ equivalent closures. 
We begin by setting up some notation. 

For any two indices $i,j$ such that $1\leq i<j\leq n$, we define the $2n$-component string links $\tau_{ij}$, $\tau_{ji}$, 
$\nu_{ij}$ and $\nu_{ji}$ as obtained from the trivial string link by surgery along  the doubled tree claspers represented in  Figure~\ref{2n-string}. 
We also define $\tau_{ij}^{-1}$ (resp. $\tau_{ji}^{-1}$, $\nu_{ij}^{-1}$ and $\nu_{ji}^{-1}$) as obtained by surgery along the clasper of Figure~\ref{2n-string} with a positive half twist inserted on the $\ast$-marked edge. 
\begin{figure}[!h]
\includegraphics[scale=0.8]{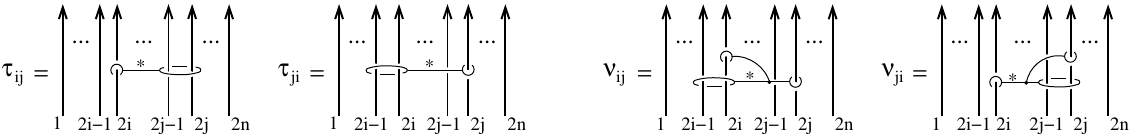} 
  \caption{The string links $\tau_{ij}$, $\tau_{ji}$,  $\nu_{ij}$ and $\nu_{ij}$ ($1\leq i<j\leq n$).}\label{2n-string}
\end{figure}
In what follows, we shall also sometimes use the same notation for the clasper given in Figure~\ref{2n-string}.

Given an $n$-component bottom tangle $\sigma$ and a $2n$-component string link $\tau$, let 
$\sigma\cdot \tau$ denote the bottom tangle obtained by stacking $\sigma$ over $\tau$ 
as illustrated in Figure~\ref{stacking}, equipped with the orientation induced by $\sigma$. 
\begin{figure}[!h]
 \includegraphics[scale=1]{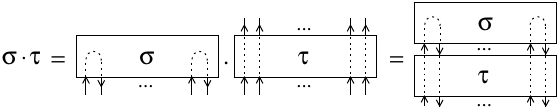}
\caption{Stacking an $n$-component bottom tangle $\sigma$ over a $2n$-component string link $\tau$}\label{stacking}
\end{figure}

\begin{rem}
 This stacking operation corresponds to Habegger-Lin's \emph{left} action of $2n$-component string links on $n$-component string links \cite{HL}, and the \lq Markov-type result\rq~ stated in Theorem \ref{Markov} below for clasp-pass, band-pass and band-$\shp$ equivalence, is in this sense related to \cite[Thm.~2.9 and 2.13]{HL} for link-homotopy. Specifically, surgery along the $C_1$-tree $\tau_{ij}$ corresponds to partial conjugation in \cite[Def.~2.12]{HL}. A full  clasper interpretation of Habegger-Lin's work, in the context of link-homotopy, is carried out by Graff in \cite{Graff}. 
\end{rem}

\begin{lem}\label{lem:nunu}
 For any $i,j~(i<j)$ and for any $n$-component bottom tangle $\sigma$,  we have $\sigma\cdot \nu_{ij}^\varepsilon \stackrel{C_3}{\sim} \sigma\cdot \nu_{ji}^\varepsilon$ ($\varepsilon=\pm 1$).
\end{lem}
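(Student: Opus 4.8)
The claim is that for any $n$-component bottom tangle $\sigma$, stacking with $\nu_{ij}$ and with $\nu_{ji}$ give $C_3$-equivalent bottom tangles. Since both $\nu_{ij}$ and $\nu_{ji}$ are obtained from the trivial string link by surgery along a single doubled $C_1$-tree (drawn in Figure~\ref{2n-string}), and these two claspers differ only in how their doubled leaf is routed relative to the two strands involved, my strategy is to transform the clasper $\nu_{ij}$ into $\nu_{ji}$ by a sequence of elementary moves from the clasper calculus of Lemma~\ref{lem:calculus}, tracking all error terms that appear and checking that each is removable up to $C_3$-equivalence.

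\textbf{Key steps.} First I would identify $\nu_{ij}$ and $\nu_{ji}$ explicitly as doubled $C_1$-trees and isolate precisely the difference between them: passing from one to the other should amount to sliding the doubled leaf (or exchanging the relative positions of the two intersection points with the strands) along the tangle. Second, I would perform this deformation using Lemma~\ref{lem:calculus}~(2)-(3) to pass the edge and leaf of the $C_1$-tree across the strands of $\sigma$ as required. Each such passage introduces extra claspers: by Lemma~\ref{lem:calculus}~(2) ($k=1$) and (3) ($k=1$) these extra terms are $C_2$-trees, specifically doubled $C_2$-trees arising from the fact that we are working with a \emph{doubled} leaf rather than a simple one. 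Third, I would argue that every such extra $C_2$-tree is of antiparallel type — this is where the distinction between $\nu$ (antiparallel doubled leaf) and $\tau$ (parallel doubled leaf) in Figure~\ref{2n-string} becomes essential — and hence can be removed up to $C_3$-equivalence by Lemma~\ref{basicmove2}, cases (i) or (ii). Finally, after all error terms are cleared, the deformed clasper coincides with $\nu_{ji}$, giving the stated $C_3$-equivalence.

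\textbf{The main obstacle.} The delicate point is bookkeeping the error terms and verifying they fall under the \emph{$C_3$-equivalence} (not merely $C_3$-concordance) conclusions of Lemma~\ref{basicmove2}. Lemma~\ref{basicmove2} only guarantees $C_3$-\emph{concordance} in case (iii), where exactly one of the two auxiliary leaves meets the cobounding arc $\alpha$; to get a genuine $C_3$-equivalence here I must ensure the extra doubled $C_2$-trees produced always fall into case (i) or (ii). I expect this to hold because the antiparallel doubled leaf of $\nu_{ij}$ intersects a single component at two points cobounding a short arc, and the relevant deformation keeps the auxiliary leaves either both disjoint from, or both meeting, this arc; but confirming this symmetry carefully — rather than hand-waving — is the crux. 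I would organize the argument around a figure analogous to Figure~\ref{fig:parallel}, drawing the intermediate claspers so that the application of each case of Lemma~\ref{lem:calculus} and Lemma~\ref{basicmove2} is visually unambiguous.
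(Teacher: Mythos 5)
There is a genuine gap, and it begins with a misidentification of the objects: $\nu_{ij}$ and $\nu_{ji}$ are doubled \emph{$C_2$}-trees (three leaves, one of them doubled), not doubled $C_1$-trees --- it is the $\tau_{ij}$'s that are doubled $C_1$-trees. More importantly, the two claspers do not ``differ only in how the doubled leaf is routed'': they distribute their leaves differently between the two components involved (roughly, $\nu_{ij}$ meets component $i$ with multiplicity three and $j$ once, while $\nu_{ji}$ does the opposite). No amount of sliding leaves along the tangle via Lemma~\ref{lem:calculus}~(1)--(3) can change which component a leaf lies on, so the deformation you describe can never terminate at $\nu_{ji}$, however carefully the error terms are tracked. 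The worry you raise about case~(iii) of Lemma~\ref{basicmove2} only yielding $C_3$-concordance is reasonable in general, but it is not the crux here; the crux is that the claimed leaf-sliding deformation does not exist.

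The actual argument is of a different nature: one first uses Corollary~\ref{cor} to resolve $\nu_{ij}$ into two copies of a \emph{simple} $C_2$-tree of index $(i,i,j)$, then invokes the symmetry (interchangeability of the two components) of the Whitehead link --- the surgery effect of such a tree, see Figure~\ref{fig:exclasp} --- to replace these by two copies of a simple $C_2$-tree of index $(i,j,j)$, and finally applies Corollary~\ref{cor} in reverse to reassemble $\nu_{ji}$. The exchange of the roles of $i$ and $j$ is thus achieved by a global topological identification of surgery effects, not by local clasper calculus; this is the missing idea in your proposal.
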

\begin{proof}
Let us prove the case $\varepsilon=+1$. 
The proof is illustrated in Figure \ref{fig:nunu}. Starting with $\sigma\cdot \nu_{ij}$, applying Corollary \ref{cor} yields two copies of a simple $C_2$-tree with index $(i,i,j)$. We then use the symmetry of the Whitehead (string) link (Figure~\ref{fig:exclasp}), resulting in two copies of a simple $C_2$-tree with index $(i,j,j)$. Use again Corollary \ref{cor} then yields $\sigma\cdot \nu_{ji}$.
\begin{figure}[!h]
 \includegraphics[scale=0.8]{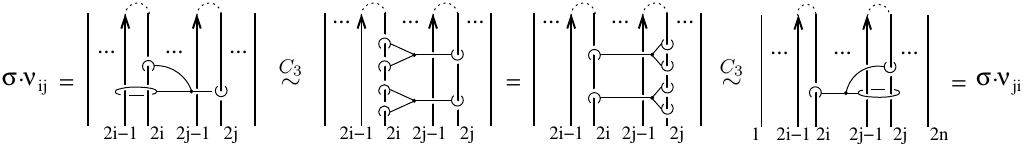}
\caption{The bottom tangles  $\sigma\cdot \nu_{ij}$ and $\sigma\cdot \nu_{ji}$ are $C_3$-equivalent}\label{fig:nunu}
\end{figure}
The case $\varepsilon=-1$ easily follows, using the fact that both $\nu_{ij}\cdot \nu_{ij}^{-1}$ and $\nu_{ji}\cdot \nu_{ji}^{-1}$ are $C_3$-equivalent to the trivial string link. 

\end{proof}

\begin{lem}\label{lem:commute}
 Let $x$ and $y$ be two $2n$-component string links in $\{\tau_{ij}^{\pm 1},\nu_{kl}^{\pm 1}\, ; \, 1\le i\neq j\le n;\, 1\le k<l\le n \}$, and let $\sigma$ be an $n$-component bottom tangle $(n\ge 2)$.
Then the bottom tangles $\sigma\cdot x\cdot y$ and $\sigma\cdot y\cdot x$ are $C_3$-equivalent.
\end{lem}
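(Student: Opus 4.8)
The plan is to prove commutativity of the stacking action up to $C_3$-equivalence by reducing the general claim to a statement about how the two claspers defining $x$ and $y$ interact in the product $\sigma\cdot x\cdot y$. Since each of $x$ and $y$ is obtained from the trivial string link by surgery along a single doubled $C_1$-tree (of one of the four types $\tau_{ij}^{\pm1}$ or $\nu_{kl}^{\pm1}$ from Figure~\ref{2n-string}), the bottom tangle $\sigma\cdot x\cdot y$ is obtained from $\sigma$ by surgery along a disjoint union of two such doubled claspers, stacked one below the other. The tangles $\sigma\cdot x\cdot y$ and $\sigma\cdot y\cdot x$ differ precisely by exchanging the vertical order of these two claspers. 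Thus it suffices to show that this reordering can be realized up to $C_3$-equivalence.

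First I would isolate the two claspers $X$ and $Y$ (associated to $x$ and $y$) in disjoint $3$-balls sitting at the bottom of the cylinder, so that exchanging their heights is a matter of passing $X$ past $Y$. To move $X$ below $Y$, one must slide the edges and leaves of $X$ across the edges, leaves, and string-link strands constituting $Y$. Here the calculus of Lemma~\ref{lem:calculus} does the work: by parts (1) and (2), passing an edge of $X$ across an edge of $Y$, or across a strand of $\sigma$, is harmless up to $C_3$-equivalence (the only potential new term, from part~(2) with $k=1$, is a $C_2$-tree, but our claspers $X,Y$ are themselves $C_1$-trees and the relevant crossings are between distinct claspers, so part~(1) applies and no low-degree correction survives). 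The subtle interactions are the leaf-leaf crossings, where part~(3) governs the outcome.

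The key structural observation I would exploit is that $X$ and $Y$ are \emph{doubled} $C_1$-trees whose doubled leaves are of antiparallel type (the two strands through a doubled leaf of $\tau$ or $\nu$ run in opposite directions along a single component, as in Figure~\ref{2n-string}). Consequently, any extra clasper created while sliding a leaf of $X$ past a leaf of $Y$ via Lemma~\ref{lem:calculus}~(3) is a doubled $C_2$-tree; and by the geometry of the doubled leaves the relevant ones are of antiparallel type. By Lemma~\ref{basicmove2}, every such doubled $C_2$-tree of antiparallel type is $C_3$-concordant — indeed in the favorable cases (i) and (ii), $C_3$-\emph{equivalent} — to the trivial modification, so these correction terms can all be discarded. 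Combined with Lemma~\ref{basicmove2bis} for any parallel-type terms that arise (replacing them by pairs of parallel $C_2$-trees, which then cancel or are removable), this lets me shuttle $X$ entirely below $Y$ without changing the $C_3$-class. I expect the main obstacle to be a careful case analysis verifying that the doubled $C_2$-trees produced by the leaf-crossings are genuinely of antiparallel type for each of the possible type-combinations of $x$ and $y$, and checking that no residual parallel-type term requires the weaker $C_3$-concordance rather than full $C_3$-equivalence; since the statement claims $C_3$-equivalence (not merely $C_3$-concordance), I would need to confirm that cases (i)–(ii) of Lemma~\ref{basicmove2}, rather than the concordance-only case (iii), cover all the interactions, which follows from the fact that both leaves being exchanged are simple leaves lying on distinct components from the doubled leaf's arc.
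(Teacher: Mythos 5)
Your overall strategy --- commute the two claspers by clasper calculus and kill the resulting doubled $C_2$-trees via Lemma~\ref{basicmove2} --- is the same as the paper's for most cases, and you correctly identify the danger point: whether the correction terms land in the $C_3$-equivalence cases (i)--(ii) of Lemma~\ref{basicmove2} or only in the concordance case (iii). But your closing justification for why (i)--(ii) always suffice (``both leaves being exchanged are simple leaves lying on distinct components from the doubled leaf's arc'') fails precisely in the hardest case, $x=\tau_{ij}$, $y=\tau_{ji}$. There the exchanges are not between simple leaves: the doubled leaf of $\tau_{ij}$ (on the $i$th pair of strands) must pass the simple leaf of $\tau_{ji}$, and the simple leaf of $\tau_{ij}$ must pass the doubled leaf of $\tau_{ji}$ (both on the $j$th pair). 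Tracking the second exchange via Lemma~\ref{lem:calculus}~(3)--(4), the correction term is a doubled $C_2$-tree whose doubled leaf sits on the $j$th pair and whose remaining simple leaves are a copy of the simple leaf of $\tau_{ij}$ (on the $j$th pair, \emph{above} the doubled leaf, hence inside the arc $\alpha$) and a leaf on the $i$th pair (outside $\alpha$). Exactly one leaf meets $\alpha$: this is case~(iii) of Lemma~\ref{basicmove2}, which yields only $C_3$-\emph{concordance}. Your argument as written therefore proves the lemma only up to $C_3$-concordance for this case, not the claimed $C_3$-equivalence. The paper avoids this by a separate symmetric argument: it introduces an auxiliary union $U$ of doubled $C_1$-trees and slides a leaf in two different ways to show that \emph{both} products $\tau_{ij}\cdot\tau_{ji}$ and $\tau_{ji}\cdot\tau_{ij}$ are $C_3$-equivalent to $\mathbf{1}_U$ modulo doubled $C_2$-trees that land in case~(i).

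Two smaller points. First, $\nu_{kl}$ is a doubled $C_2$-tree, not a doubled $C_1$-tree; this actually works in your favor, since by Lemma~\ref{lem:calculus}~(1)--(3) a degree-$2$ clasper commutes past everything with no low-degree correction terms at all, which is how the paper disposes of every case involving a $\nu$. Second, your fallback of invoking Lemma~\ref{basicmove2bis} for parallel-type terms would not rescue the argument even if such terms arose: that lemma is a $C_3$-concordance statement, and a pair of parallel $C_2$-trees is generally \emph{not} removable up to $C_3$-equivalence (it shifts $a_2(ij)$ by $2$, cf.\ Lemma~\ref{keylemma}), so any appeal to it is incompatible with the conclusion you are after.
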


\begin{proof}
It suffices to prove the result for $x,y\in \{\tau_{ij},\nu_{kl}\, ; \, 1\le i\neq j\le n;\, 1\le k<l\le n \}$, 
by a case by case verification.
 
 In some cases, the result already holds at the string link level. 
In the case where $x=\nu_{kl}$ ($k<l$), we have that $x\cdot y$ and $y\cdot x$ are $C_3$-equivalent by Lemma \ref{lem:calculus}~(1)-(3).
 We also have that $x\cdot y\stackrel{C_3}{\sim}  y\cdot x$ in the case $x=\tau_{ij}$ and $y=\tau_{kl}$ if $i,j,k,l$ are pairwise distinct (note that we might need Lemma \ref{lem:calculus}~(1)).

\noindent  
We now focus on the case where $x=\tau_{ij}$ and $y=\tau_{ji}$ for some $i<j$. 
Consider the union $U$ of doubled $C_1$-trees for the trivial $2n$-component string link $\mathbf{1}$ represented in the center of  Figure~\ref{fig:commute2}.
On the one hand, sliding upwards the leaf labeled $f$ in the figure shows that 
$\mathbf{1}_U\stackrel{C_3}{\sim}\tau_{ij}\cdot \tau_{ji}\cdot \mathbf{1}_D$ by Lemma \ref{lem:calculus}~(1) and (3), where $D$ is the doubled $C_2$-tree shown on the left-hand side of Figure~\ref{fig:commute2}.
On the other hand, by sliding upwards leaf $f'$ we likewise obtain that  
$\mathbf{1}_U\stackrel{C_3}{\sim}\tau_{ji}\cdot \tau_{ij}\cdot \mathbf{1}_{D'}$, where $D'$ is the doubled $C_2$-tree on the right-hand side. 
This, together with Lemma~\ref{basicmove2}~(i), proves that $\sigma\cdot \tau_{ij}\cdot \tau_{ji}$ and $\sigma\cdot \tau_{ji}\cdot \tau_{ij}$ are $C_3$-equivalent for all $i<j$.
\begin{figure}[!h]
 \includegraphics[scale=0.75]{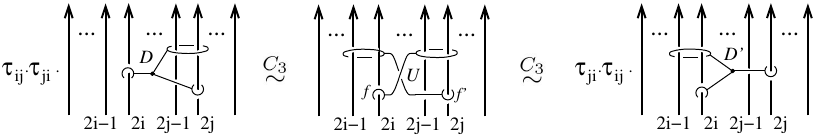}
\caption{ }\label{fig:commute2}
\end{figure}

\noindent All remaining cases involve $\tau_{ij}$ and $\tau_{kl}$ such that exactly $3$ of the  indices $i,j,k,l$ are pairwise distinct. 
Figure \ref{fig:commute3} illustrates the argument for two such cases: applying Lemma~\ref{lem:calculus}~(1)-(3), we can commute the two $C_1$-trees at the cost of an additional doubled $C_2$-tree, and the result follows from  Lemma~\ref{basicmove2}~(i). All other such cases are treated in a similar way, and are left to the reader.  
\begin{figure}[!h]
\includegraphics[scale=0.63]{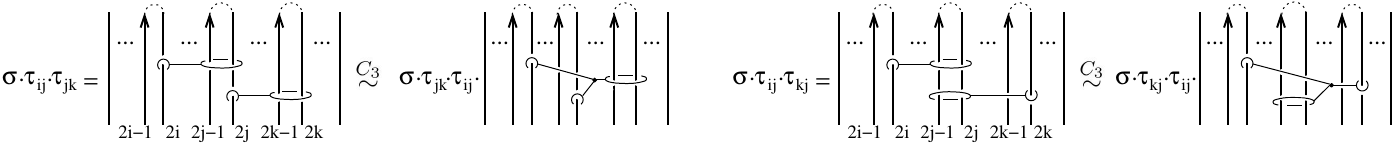}
\caption{ }\label{fig:commute3}
\end{figure}
\end{proof}

\begin{defi}\label{def:omegav}
Let $\sigma$ be an $n$-component bottom tangle. 
Let $\omega=(\omega_{ij})_{i,j}$ and $v=(v_{ij})_{i,j}$ be $n\times n$ matrices with integer entries, 
such that the diagonal entries $\omega_{ii}$ of $\omega$ are 0 and $v$ is upper triangular (that is, $v_{ij}=0\text{ if }i\geq j$).   
We define the $n$-component bottom tangles $\sigma_{\omega}$ and $\sigma_{\omega,v}$ by 
\[\sigma_{\omega}:=\sigma\cdot\prod_{1\leq i\neq j\leq n}\tau_{ij}^{\omega_{ij}}\quad \text{ and }\quad
\sigma_{\omega,v}:=\sigma_{\omega}\cdot\prod_{1\leq i<j\leq n}\nu_{ij}^{v_{ij}},
\]
where for $x\in \{\tau_{ij},\nu_{kl}\, ; \, 1\le i\neq j\le n;\, 1\le k<l\le n \}$,  and for $\alpha\in\mathbb{Z}$, 
$x^{\alpha}$ denotes the stacking product of $|\alpha|$ copies of $x^{\alpha/|\alpha|}$.
\end{defi}
\begin{rem}
 The bottom tangles $\sigma_{\omega}$ and $\sigma_{\omega,v}$ are in general not well-defined from the above formula, since we have not specified any order on the entries of $n\times n$ integer matrices.
 However, in view of Lemma~\ref{lem:commute}, their $C_3$-equivalences classes are well-defined. Since we shall only consider these tangles up to $C_3$-equivalence in what follows, we may safely use Definition~\ref{def:omegav}.
\end{rem}

Let us denote by $\Omega_n$ the set of $n\times n$ matrices with integer entries
whose diagonal entries are all $0$. 
We denote by 
\[T:=\{(v_{ij})_{i,j}\in \Omega_n~|~v_{ij}=0\text{ for }i\geq j\}\]
the set of strictly upper triangular integer matrices, and we further set 
\[ \Delta:=\{(\delta_{ij})_{i,j}\in \Omega_n~|~\delta_{ij}\in\{0,1\}~\forall i,j\}.\]
 
The proof of the following result is postponed to the end of this section. 
\begin{prop}\label{keyprop}
Let $\sigma$ and $\sigma'$ be $n$-component bottom tangles, whose closures $\widehat{\sigma}$ and 
$\widehat{\sigma'}$ are equivalent. We have the following: 
\begin{enumerate}
\item[(1)] There exist $\omega \in \Omega_n$ and $v\in T$ such that
$\sigma'$ is clasp-pass equivalent to $\sigma_{w,v}$.
\item[(2)] There exists a matrix $\omega \in \Omega_n$  such that
$\sigma'$ is band-pass equivalent to $\sigma_{\omega}$.
\item[(3)] There exists a matrix $\delta \in \Delta$  such that
$\sigma'$ is band-$\shp$ equivalent to $\sigma_{\delta}$.
\end{enumerate}
\end{prop}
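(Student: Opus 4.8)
The statement is a ``Markov-type'' result: the hypothesis that $\widehat\sigma$ and $\widehat{\sigma'}$ are \emph{equivalent as links} must be converted into a statement comparing the bottom tangles $\sigma$ and $\sigma'$ themselves, up to one of the three local moves, after correcting by a product of the elementary claspers $\tau_{ij}^{\pm1}$ and $\nu_{ij}^{\pm1}$. The natural strategy is to realise an isotopy of the closures by a sequence of elementary moves at the tangle level, and to track which of these moves can be absorbed into the correction factors $\tau_{ij}$, $\nu_{ij}$. The plan is to first reduce the isotopy $\widehat\sigma\cong\widehat{\sigma'}$ to a controlled sequence of local modifications, then to express each modification as a stacking by one of the standard claspers, and finally to invoke the classification Theorems~\ref{key-theorem1},~\ref{key-theorem2},~\ref{key-theorem3} together with the commutation Lemma~\ref{lem:commute} to collect these factors into the desired normal form $\sigma_{\omega,v}$, $\sigma_\omega$, or $\sigma_\delta$.

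\textbf{First step: capturing the closure isotopy.} Since $\widehat\sigma$ and $\widehat{\sigma'}$ are equivalent links, I would begin by fixing the arcs used to close $\sigma$ into $\widehat\sigma$ (the left-hand picture of Figure~\ref{closure}) and arguing that, after an ambient isotopy carrying $\widehat{\sigma'}$ to $\widehat\sigma$, the tangle $\sigma'$ differs from $\sigma$ by modifications localised near these closure arcs. The key geometric point is that the only freedom one gains in passing from links to bottom tangles is the freedom to ``spin'' strands around the closure arcs and to drag one component's strand along another's; each such elementary operation is exactly what surgery along a $\tau_{ij}$ or $\nu_{ij}$ clasper records. Concretely, I expect that $\sigma'$ becomes clasp-pass (resp.\ band-pass, band-$\shp$) equivalent to $\sigma$ pre-composed with some unreduced product $\prod \tau_{ij}^{\pm1}\prod\nu_{ij}^{\pm1}$, the exponents being determined by how many times each strand is dragged. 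The self-contained way to see this is to present the isotopy of the closures as a finite sequence of Reidemeister-type moves, and to check that each move either extends to an isotopy of the tangle (contributing nothing) or else is realised by stacking one elementary clasper.

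\textbf{Second step: reduction to normal form.} Once $\sigma'$ is known to be equivalent to $\sigma\cdot w$ for some word $w$ in the $\tau_{ij}^{\pm1}$ and $\nu_{ij}^{\pm1}$, I would use Lemma~\ref{lem:commute} (which says all these elementary factors commute up to $C_3$-equivalence) to reorder the word into the canonical shape of Definition~\ref{def:omegav}, collecting the $\tau$-exponents into a matrix $\omega\in\Omega_n$ and the $\nu$-exponents into $v$. For part~(1) this already yields $\sigma_{\omega,v}$ with $v\in T$, after using Lemma~\ref{lem:nunu} to force $\nu_{ij}=\nu_{ji}$ so that only the upper-triangular entries are needed. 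For part~(2) the passage from clasp-pass to band-pass (equivalently $C_3$ to $C_3$-concordance, Lemma~\ref{band-pass$=C3+c$}) lets me discard the $\nu_{ij}$ factors entirely: surgery along each $\nu_{ij}$ is a doubled $C_2$-tree of antiparallel type, which is $C_3$-concordant to the identity by Lemma~\ref{basicmove2}, so only $\omega$ survives. For part~(3), band-$\shp$ equivalence is coarser still, and since $\tau_{ij}^{4}$ amounts to an $8$-move (and hence is band-$\shp$-trivial up to the controlled Milnor-invariant change handled by Lemma~\ref{lem:8move}), the exponents $\omega_{ij}$ may be reduced modulo the appropriate modulus to land in $\{0,1\}$, giving $\delta\in\Delta$.

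\textbf{Main obstacle.} The delicate point is the \emph{first} step: making rigorous the claim that an isotopy of the closures decomposes into tangle-level isotopies plus stackings by precisely the claspers $\tau_{ij},\nu_{ij}$, with no other correction terms. The risk is that dragging a strand along the closure arc introduces auxiliary claspers (doubled $C_2$-trees, or higher-degree trees) whose surgery is not obviously absorbable. Here I expect to lean heavily on the clasper calculus of Lemma~\ref{lem:calculus}~(1)--(6), together with Lemmas~\ref{basicmove2bis} and~\ref{basicmove2}, to show that every such extra term is either a pair of parallel $C_2$-trees (invisible to the relevant invariants by Lemma~\ref{lem:milnorparallel}) or a doubled antiparallel $C_2$-tree (removable up to $C_3$-concordance). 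Controlling this bookkeeping uniformly across the three equivalence relations, and verifying that the modulus reductions for part~(3) are consistent with the band-$\shp$ relation, is where the real work lies; the algebraic reorganisation in the second step is then essentially formal given Lemma~\ref{lem:commute}.
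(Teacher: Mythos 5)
Your overall architecture matches the paper's: realize the closure isotopy at the tangle level (the paper does this by writing $\sigma'$ as a band sum of $\widehat{\sigma}$ with the trivial bottom tangle along knotted and linked bands, and undoing the knotting by crossing changes and full twists on the bands), absorb each elementary modification into a $\tau_{ij}^{\pm1}$ or $\nu_{ij}^{\pm1}$, and normalize with Lemmas~\ref{lem:commute} and \ref{lem:nunu}. However, there are two concrete gaps in your error-term analysis. First, for part~(1) you propose to dispose of the auxiliary claspers created during the dragging by observing that they are either ``pairs of parallel $C_2$-trees, invisible to the relevant invariants'' or ``doubled antiparallel $C_2$-trees, removable up to $C_3$-concordance''. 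Part~(1) asserts clasp-pass, i.e.\ $C_3$-\emph{equivalence}, so removability up to $C_3$-concordance is not sufficient; worse, a pair of parallel $C_2$-trees of index $(p,q,r)$ changes $\mu(pqr)$ by $\pm2$ (Lemma~\ref{keylemma}~(iv)), so it is \emph{not} clasp-pass trivial, nor even band-pass trivial, and if such terms genuinely appeared then parts~(1) and~(2) would fail as you argue them. The paper avoids this: the dragging in Figures~\ref{deform3} and~\ref{deform3bis} produces only doubled $C_2$-trees, and a case analysis on the indices shows each one is either a copy of some $\nu_{ik}$ (which is \emph{kept} and absorbed into $v$, after using Lemma~\ref{lem:nunu} to make it upper-triangular) or a doubled tree falling under case~(i) or~(ii) of Lemma~\ref{basicmove2}, hence erasable up to genuine $C_3$-equivalence. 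You need this finer case analysis, not an appeal to invariants or to concordance.

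Second, for part~(3) you reduce the exponents $\omega_{ij}$ to $\{0,1\}$ by arguing that $\tau_{ij}^{4}$ amounts to an $8$-move. That would at best reduce exponents modulo~$4$, which does not land in $\Delta$; moreover Lemma~\ref{lem:8move} only asserts invariance of certain Milnor invariants modulo~$2$ under $8$-moves, not band-$\shp$ triviality of the move itself. What is actually needed, and what the paper proves directly (Figure~\ref{band sharp cancel tau2}), is that $\tau_{ij}^{\pm2}$ is band-$\shp$ equivalent to the trivial string link; this is the statement that lets you reduce each exponent modulo~$2$ and reach $\delta\in\Delta$.
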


Assuming Proposition~\ref{keyprop} for now, we obtain the following \lq Markov-type result\rq~for the three equivalence relations studied in this paper.
\begin{theo}\label{Markov}
Let $L$ and $L'$ be two $n$-component links, and let $\sigma$ and $\sigma'$ be 
bottom tangles whose closures $\widehat{\sigma}$ and $\widehat{\sigma'}$  are equivalent to $L$ and $L'$, respectively. We have the following: 
\begin{enumerate}
\item[(1)] $L$ and $L'$ are clasp-pass equivalent if and only if 
there exist matrices $\omega \in \Omega_n$ and $v\in T$ such that
$\sigma'$ is clasp-pass equivalent to $\sigma_{w,v}$.
\item[(2)] $L$ and $L'$ are band-pass equivalent if and only if 
there exists a matrix $\omega \in \Omega_n$  such that
$\sigma'$ is band-pass equivalent to $\sigma_{\omega}$.
\item[(3)] $L$ and $L'$ are band-$\shp$ equivalent if and only if 
there exists a matrix $\delta \in \Delta$  such that
$\sigma'$ is band-$\shp$ equivalent to $\sigma_{\delta}$.
\end{enumerate}
\end{theo}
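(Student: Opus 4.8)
The plan is to derive Theorem~\ref{Markov} from Proposition~\ref{keyprop} together with a compatibility statement: stacking with the string links $\tau_{ij}^{\pm1}$ and $\nu_{ij}^{\pm1}$ does not change the closure up to the relevant equivalence. The ``if'' direction of each part is the substantive content of Proposition~\ref{keyprop}, so the real work of the theorem is to package this correctly and supply the converse ``only if'' direction. First I would record the key observation that closure commutes appropriately with these stacking operations. Concretely, for each generator $\tau_{ij}^{\pm1}$ and $\nu_{ij}^{\pm1}$, I claim that the closure $\widehat{\sigma\cdot x}$ is obtained from $\widehat{\sigma}$ by a local move of the relevant type (a clasp-pass move for part (1), a band-pass move for (2), a band-$\shp$ move for (3)); this is what makes surgery along these particular doubled $C_1$-trees compatible with taking closures. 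Indeed, the doubled leaves in Figure~\ref{2n-string} are arranged precisely so that, after identifying starting and ending points of each component, the surgered clasper is supported in a ball meeting a single component in two parallel strands, which is exactly the configuration of the corresponding local move.

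Granting that, the ``if'' direction of each part follows by a short chain. I would argue, for part (1): suppose there exist $\omega\in\Omega_n$ and $v\in T$ with $\sigma'$ clasp-pass equivalent to $\sigma_{\omega,v}$. Since $\sigma_{\omega,v}=\sigma\cdot\prod\tau_{ij}^{\omega_{ij}}\cdot\prod\nu_{ij}^{v_{ij}}$ is obtained from $\sigma$ by finitely many stackings with $\tau_{ij}^{\pm1}$ and $\nu_{ij}^{\pm1}$, the closure $\widehat{\sigma_{\omega,v}}$ is obtained from $\widehat{\sigma}=L$ by finitely many clasp-pass moves, hence is clasp-pass equivalent to $L$; as $\widehat{\sigma'}=L'$ and clasp-pass equivalence is preserved under closure, $L'$ is clasp-pass equivalent to $L$. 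Parts (2) and (3) are identical, using that a clasp-pass move is in particular a band-pass move and a band-pass move a band-$\shp$ move (so the compatibility of $\tau$ and $\nu$ with closure, once established for the finest relation, transfers to the coarser ones), and noting that in (2) only the $\tau_{ij}$ appear while in (3) the exponents are restricted to $\{0,1\}$ via $\delta\in\Delta$.

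The converse ``only if'' direction in each part is exactly the corresponding statement of Proposition~\ref{keyprop}: if $L$ and $L'$ are equivalent up to the relevant local move then, after passing to bottom tangles $\sigma,\sigma'$ whose closures are $L,L'$, the closures $\widehat{\sigma}$ and $\widehat{\sigma'}$ are equivalent (as unoriented or oriented links, as appropriate), which is precisely the hypothesis of Proposition~\ref{keyprop}, yielding the desired $\omega,v$ or $\delta$. Thus I would simply invoke Proposition~\ref{keyprop}~(1), (2), (3) respectively. One point requiring care is that the hypothesis of Proposition~\ref{keyprop} is only that $\widehat{\sigma}$ and $\widehat{\sigma'}$ are equivalent (as links), whereas the theorem's hypothesis is that each is equivalent to $L$, resp.\ $L'$; these match because $L$ and $L'$ are assumed equivalent up to the local move, so in the ``only if'' direction one first uses that clasp-pass (resp.\ band-pass, band-$\shp$) equivalence of $L$ and $L'$ implies their closures' underlying links are related, feeding this into the proposition.

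The main obstacle is the compatibility claim that stacking with $\tau_{ij}^{\pm1}$ and $\nu_{ij}^{\pm1}$ effects a single move of the appropriate type on the closure. This is geometric rather than computational: it requires reading off from Figure~\ref{2n-string} how the doubled $C_1$- and $C_2$-trees sit relative to the closure arcs, and verifying that upon closing $\sigma\cdot x$ the surgery reduces to the local picture of a clasp-pass, band-pass, or band-$\shp$ move on the corresponding component(s). I expect this to be the delicate step, and I would prove it by an explicit isotopy of the closed-up clasper into the standard tangle for each local move, much as in Example~\ref{Ex:Cn} and the proof of Lemma~\ref{band-pass$=C3+c$}; in particular the ``band'' condition (two parallel strands belonging to the same component) is automatically satisfied precisely because the doubled leaf meets a single component twice, which is how the $\tau$ and $\nu$ claspers were defined. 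Once this is in place, the rest of the proof is the formal bookkeeping sketched above.
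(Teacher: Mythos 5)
Your reduction to Proposition~\ref{keyprop} has the right overall shape, but the ``only if'' direction contains a genuine gap: the hypothesis of Proposition~\ref{keyprop} is that the closures $\widehat{\sigma}$ and $\widehat{\sigma'}$ are \emph{equivalent}, i.e.\ ambient isotopic, whereas in the ``only if'' direction of Theorem~\ref{Markov} you only know that $L=\widehat{\sigma}$ and $L'=\widehat{\sigma'}$ are clasp-pass (resp.\ band-pass, band-$\shp$) equivalent. Your remark that clasp-pass equivalence of $L$ and $L'$ implies their closures are ``related, feeding this into the proposition'' does not bridge this: being related by clasp-pass moves is not the proposition's hypothesis, and you cannot invoke it for $\sigma$ and $\sigma'$ directly. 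The missing ideas are (a) a reduction to the case of a single move, with Lemma~\ref{lem:commute} used to recombine the stackings produced by successive moves into a single product $\sigma_{\omega,v}$, and (b) the construction of an intermediate bottom tangle $\sigma''$ that is clasp-pass equivalent to $\sigma'$ \emph{and} satisfies $\widehat{\sigma''}=\widehat{\sigma}$ up to isotopy; this $\sigma''$ is obtained by choosing closing arcs disjoint from the $3$-ball supporting the move and undoing the move inside the tangle part. Only then does Proposition~\ref{keyprop}, applied to $\sigma$ and $\sigma''$, yield $\sigma''\stackrel{C_3}{\sim}\sigma_{\omega,v}$ and hence $\sigma'\stackrel{C_3}{\sim}\sigma_{\omega,v}$.

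The ``if'' direction reaches the correct conclusion, but your justification of the compatibility claim is off. Surgery along $\tau_{ij}^{\pm1}$ or $\nu_{ij}^{\pm1}$ is \emph{not} a clasp-pass (or band-pass, or band-$\shp$) move: these are doubled $C_1$- and $C_2$-trees whose surgery genuinely changes the isotopy type of the bottom tangle --- that is precisely why they are needed in Proposition~\ref{keyprop}, and Proposition~\ref{trace} records how they change $a_2(ij)$, $\mu(jiij)$ and $\mu(ijk)$. What actually makes everything work is that each clasper of Figure~\ref{2n-string} becomes trivial \emph{after closure}, by Habiro's move 1, because its doubled leaf meets the two strands that are joined by a closure arc; hence $\widehat{\sigma_{\omega,v}}=\widehat{\sigma_{\omega}}=\widehat{\sigma}$ on the nose, and the ``if'' direction is immediate. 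Your proposed verification --- isotoping the closed-up clasper into the standard tangle of the corresponding local move --- would not succeed as described, since the surgered clasper is null after closure and there is no nontrivial local move to exhibit.
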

\begin{proof}
We only prove here the first statement: the proof of the other two statements are strictly similar and make use of Proposition~\ref{keyprop} in the exact same way. \\
The \lq if\rq~part of the statement is clear. Indeed, for any $\omega \in \Omega_n$ and $v\in T$, we have that  $\widehat{\sigma_{\omega,v}}=\widehat{\sigma_{\omega}}=\widehat{\sigma}$ because 
all claspers in Figure~\ref{2n-string} become trivial under the closure operation by Habiro's move 1. \\
For the \lq only if\rq~part of the statement, thanks to Lemma \ref{lem:commute} 
it is enough to show the case where $\widehat{\sigma'}$ is obtained from $\widehat{\sigma}$ by a single clasp-pass move. Then there exists a bottom tangle $\sigma''$ which is clasp-pass equivalent to $\sigma'$ and such that $\widehat{\sigma''}=\widehat{\sigma}$ (this is because we can always find \lq closing arcs\rq~for $\sigma''$ that are disjoint from a $3$-ball supporting the clasp-pass move.) 
By Proposition~\ref{keyprop}~(1), we have that there exist matrices $\omega \in \Omega_n$ and $v\in T$ such that $\sigma''$, hence $\sigma'$, is clasp-pass equivalent to $\sigma_{w,v}$. 
\end{proof}
It only remains to prove Proposition~\ref{keyprop}.

\begin{proof}[Proof of Proposition~\ref{keyprop}~(1)]
We regard the bottom tangle $\sigma'$ as a band sum 
 of $\widehat{\sigma'}$ and the trivial bottom tangle, along a union $b$ of \lq trivially embedded\rq~ bands as illustrated on the left-hand side of Figure~\ref{bandsum}. 
\begin{figure}[!h]
\includegraphics[scale=1.1]{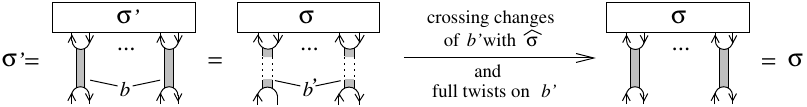} 
  \caption{}\label{bandsum}
\end{figure}

\noindent Performing an isotopy turning $\widehat{\sigma'}$ into $\widehat{\sigma}$, and further isotopies of the attaching regions of the bands, we obtain a decomposition of $\sigma'$ as a band sum of $\widehat{\sigma}$ and the trivial bottom tangle. This band sum is along the image $b'$ of $b$ under these isotopies; these bands $b'$ are thus knotted and linked together and with $\widehat{\sigma}$. 
Therefore $\sigma$ can be obtained from this band sum by crossing changes 
between the bands $b'$ and strands of $\widehat{\sigma}$, and adding full-twists to $b'$, see Figure~\ref{bandsum}. 
Note that a crossing change between bands of $b'$ can be realized by the former deformations. 
By Lemma \ref{lem:commute}, it is enough to consider each of these deformations of the bands $b'$ independently. 

We note that adding a full twist to the $i$th band, is achieved by surgery along a union of doubled $C_1$-trees  
$\tau_{ij}^{\pm1}~(j\neq i)$, see Figure~\ref{deform2}.\footnote{The figure only illustrates the case of a negative full twist, but the positive case is similar.} 
\begin{figure}[!h]
 \includegraphics[scale=1]{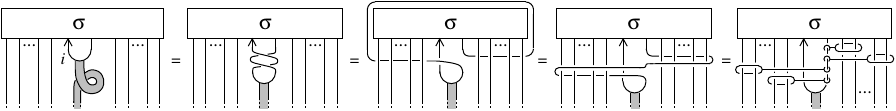}
  \caption{Performing full twists on bands by surgery along claspers $\tau_{ij}^{\pm 1}$}\label{deform2}
\end{figure}

\noindent Now, a crossing change between the $i$th component of $b'$ and the $j$th component of $\widehat{\sigma}$ (possibly $i=j$), is achieved by surgery along a doubled $C_1$-tree $C$ whose doubled leaf intersects $b'$ transversely (see the left-hand sides of Figures~\ref{deform3} and \ref{deform3bis}). 
We may regard $\widehat{\sigma}$ as obtained from the trivial link by surgery along a union $U$ of simple $C_1$-trees (Example~\ref{Ex:Cn}). 
Let us distinguish two cases. 
 \\
 The case where $i\neq j$ is summarized in Figure~\ref{deform3}.
  We aim at dragging the simple leaf $f$ of $C$ towards the band $b'$, so as to obtain a copy $C'$ of $\tau_{ij}^{\pm 1}$ or $\tau_{ji}^{\pm 1}$ (see the right-hand side of the figure). 
 This is achieved by passing successively the leaf $f$ through some leaves of $U$, and passing the edge of $C$ across some tangle strand: by  Lemma~\ref{lem:calculus}~(1)-(3), each such operation can be achieved, up to $C_3$-equivalence, at the cost of an additional doubled $C_2$-trees, and we only have to show that up to $C_3$-equivalence, all such extra terms are either copies of $\nu_{ij}$ with $i<j$ or can be deleted. As Figure~\ref{deform3} illustrates,\footnote{The figure shows the case of a leaf exchange, but the  operation passing the edge of $C$ across a tangle strand is treated similarly.} there are two subcases:
   \begin{itemize}
 \item if $i=k$, the leaf exchange introduces a copy of $\nu_{ij}$ up to $C_3$-equivalence; if $i>j$, we replace $\nu_{ij}$ with $\nu_{ji}$ using Lemma \ref{lem:nunu}; 
 \item if $i\neq k$, the extra $C_2$-tree can be deleted up to $C_3$-equivalence by Lemma~\ref{basicmove2}~(i).
 \end{itemize}
\begin{figure}[!h]
 \includegraphics[scale=1.1]{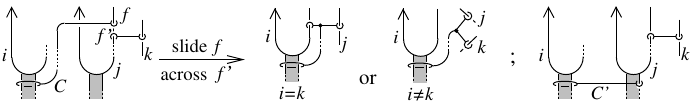}
  \caption{Doubled $C_1$-tree realizing a crossing change: case $i\neq j$}\label{deform3}
\end{figure}
 
 \noindent The case where $i=j$ is illustrated in Figure~\ref{deform3bis}.
 Dragging the simple leaf $f$ of $C$ yields the doubled $C_1$-tree $C'$ shown on the right-hand side, 
 and is equivalent to a union of $\tau_{ij}^{\pm 1}~(j\in\{1,...,n\}\setminus\{i\})$ 
 (see the left-hand side of Figure~\ref{fig:doubledC1} and Figure~\ref{deform2}).
 As in the previous case, we analyse the additional doubled $C_2$-trees that may appear in this dragging process: 
 \begin{itemize}
 \item if $i=k$, then the extra term given by a leaf exchange is a doubled $C_2$-tree with index $(i^{(2)},i,i)$ as shown in the figure, and by Lemma~\ref{basicmove2}~(ii) it can be deleted up to $C_3$-equivalence;
 \item if $i\neq k$, the leaf exchange introduces a copy of $\nu_{ik}$ up to $C_3$-equivalence, and we conclude as in the preceding case. 
 \end{itemize}
\begin{figure}[!h]
 \includegraphics[scale=1.1]{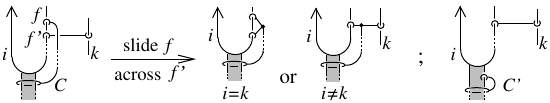}
  \caption{Doubled $C_1$-tree realizing a crossing change: case $i=j$}\label{deform3bis}
\end{figure}

Hence we have shown that $\sigma'$ can be deformed into $\sigma$ at the cost of stacking copies of $\tau_{ij}^{\pm 1}$ ($i\neq j$) and $\nu_{ij}^{\pm 1}$ ($i<j$), and the proof is complete. 
\end{proof}

\begin{proof}[Proof of Proposition~\ref{keyprop}~(2)]
Using  Proposition~\ref{keyprop}~(1), there exists matrices $\omega\in\Omega_n$   and $v\in T$ such that $\sigma'$ is clasp-pass, hence band-pass equivalent to $\sigma_{\omega,v}$. It then follows directly from  Lemmas~\ref{basicmove2}~(iii) and \ref{band-pass$=C3+c$} that $\sigma_{\omega,v}$ is band-pass equivalent to $\sigma_{\omega}$. 
\end{proof}

\begin{proof}[Proof of Proposition~\ref{keyprop}~(3)]
By Proposition~\ref{keyprop}~(2), there exists a matrix $\omega\in\Omega_n$ 
such that $\sigma'$ is band-pass equivalent to $\sigma_{\omega}$. 
It only remains to observe that  $\tau^{\pm2}_{ij}~(i\neq j)$ is band-$\shp$ equivalent to the trivial string link, as shown in Figure~\ref{band sharp cancel tau2}.  
\begin{figure}[!h]
\includegraphics[width=8.5cm]{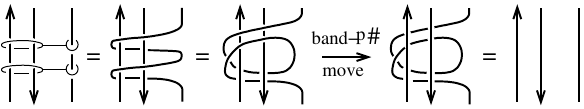} 
  \caption{A band-$\shp$ move realizes $\tau_{ij}^2$}\label{band sharp cancel tau2}
\end{figure}
\end{proof}

\section{Proof of Theorems 1 and 2}

In  order to prove our main results, we shall need the following technical result. 
\begin{prop}\label{trace}
Let $\sigma$ be an $n$-component bottom tangle.  
Let $\omega=(\omega_{ij})_{i,j}\in\Omega_n$ and $v=(v_{ij})_{i,j}\in T$. 
For any $i,j,k~(i<j)$, we have the following: 
\begin{enumerate}
\item[(1)] $a_2(\sigma_{\omega,v};i)=a_2(\sigma;i)$ and $\mu_{\sigma_{\omega,v}}(ij)=\mu_{\sigma}(ij)$.\\[-.5em]
\item[(2)]  $a_2(\sigma_{\omega,v};ij)-a_2(\sigma_{\omega};ij)=2v_{ij}$ and $\mu_{\sigma_{\omega,v}}(ijk)=\mu_{\sigma_{\omega}}(ijk)$.\\[-.5em]
\item[(3)] 
$a_2(\sigma_{\omega};ij)-a_2(\sigma;ij) = - \mu_\sigma(ij)(\omega_{ij}+\omega_{ji})$, \\
$\mu_{\sigma_{\omega}}(jiij)-\mu_{\sigma}(jiij)\equiv 
\mu_{\sigma}(ij)(\omega_{ij}+\omega_{ji}) \pmod2$ 
and \\
$\mu_{\sigma_{\omega}}(ijk)-\mu_{\sigma}(ijk)
=\mu_{\sigma}(ij)(\omega_{ki}-\omega_{kj})+\mu_{\sigma}(jk)(\omega_{ij}-\omega_{ik})
+\mu_{\sigma}(ki)(\omega_{jk}-\omega_{ji}).$
\end{enumerate}
\end{prop}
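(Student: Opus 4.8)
The plan is to treat the three statements in increasing order of difficulty, processing the elementary stacking factors $\tau_{ij}^{\pm1}$ and $\nu_{ij}^{\pm1}$ one at a time. Throughout I would use two facts. First, by Lemma~\ref{lem:commute} all the tangles involved are well defined up to $C_3$-equivalence independently of the order of the factors, and all the invariants in the statement are $C_3$-equivalence invariants by Lemma~\ref{lem:C3inv}. Second, since $\widehat{\sigma_{\omega,v}}=\widehat{\sigma}$ (all claspers of Figure~\ref{2n-string} become trivial under the full closure by Habiro's move~1), any invariant that only sees the full closure is unchanged. This last remark immediately yields Part~(1): the linking number $\mu(ij)$ equals that of the closure, so $\mu_{\sigma_{\omega,v}}(ij)=\mu_{\widehat{\sigma}}(ij)=\mu_\sigma(ij)$; and $a_2(\sigma;i)$ depends only on the knot type of the $i$th component, for which each factor $\tau_{ab}$ or $\nu_{ab}$ (with $\omega_{ii}=0$ and $v$ upper triangular, so never all indices equal to $i$) has a leaf on a discarded component $\neq i$, hence becomes trivial by Habiro's move~1.

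For Part~(2) I would recall from the proof of Lemma~\ref{lem:nunu} that $\sigma\cdot\nu_{ij}$ is $C_3$-equivalent to $\sigma_P$, with $P$ a pair of parallel $C_2$-trees of index $(i,i,j)$. Under the plat closure $p_{ij}$ each such tree becomes a $C_2$-tree of index $(i,i,i)$, exactly as in the proof of Lemma~\ref{keylemma}~(ii), so by the connected-sum argument of Lemma~\ref{keylemma}~(i) one copy of $\nu_{ij}$ adjoins two trefoil summands to $p_{ij}(\sigma_\omega)$ and raises $a_2(ij)$ by $2$; summing over the $v_{ij}$ copies and using additivity of $a_2$ under connected sum gives $a_2(\sigma_{\omega,v};ij)-a_2(\sigma_\omega;ij)=2v_{ij}$. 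For the triple linking number, the index $(i,i,j)$ has a repeated entry, so after isolating the pair by Lemma~\ref{lem:calculus}~(1)-(3) the variation formula Lemma~\ref{keylemma}~(iv) produces no change, giving $\mu_{\sigma_{\omega,v}}(ijk)=\mu_{\sigma_\omega}(ijk)$.

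Part~(3) is the heart of the proposition. The effect of a single factor $\tau_{ab}$ is Habegger--Lin's partial conjugation: surgery along the doubled $C_1$-tree $\tau_{ab}$ twists the $a$th band around the $b$th component. I would compute its effect by regarding $\sigma$ as surgery along a union $U$ of simple $C_1$-trees (Example~\ref{Ex:Cn}), in which the linking numbers $\mu_\sigma(cd)$ equal the signed count of index-$(c,d)$ trees. Sliding the doubled leaf of $\tau_{ab}$ past the leaves of $U$ and across tangle strands, Lemma~\ref{lem:calculus}~(2)-(3) creates, for each index-$(c,d)$ tree met, a $C_2$-tree of index $(a,c,d)$. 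Applying the exact formula Lemma~\ref{keylemma}~(iv), I expect $\tau_{kj}$ to change $\mu(ijk)$ by $+\mu_\sigma(ij)$ and $\tau_{ki}$ by $-\mu_\sigma(ij)$, the opposite signs recording the direction in which the twisted band crosses; summing over the six relevant factors, and using that the controlling linking numbers stay equal to those of $\sigma$ by Part~(1), gives exactly $\mu_\sigma(ij)(\omega_{kj}-\omega_{ki})+\mu_\sigma(jk)(\omega_{ik}-\omega_{ij})+\mu_\sigma(ki)(\omega_{ji}-\omega_{jk})$. For the two mod-$2$ equalities I would use that each index-$(i,i,j)$ or $(i,j,j)$ $C_2$-tree produced by resolving $\tau_{ij}$ or $\tau_{ji}$ changes \emph{both} $a_2(ij)$ and $\mu(jiij)$ by $1\pmod2$, by Lemma~\ref{keylemma}~(ii)-(iii) and Remark~\ref{remSL}; this forces the two variations to agree mod~$2$ and reduces the computation to counting these trees mod~$2$, which equals $\mu_\sigma(ij)(\omega_{ij}+\omega_{ji})$.

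The main obstacle I anticipate is the sign bookkeeping in Part~(3): obtaining the \emph{exact} triple-linking formula, rather than merely its reduction mod~$2$, requires tracking the sign of every index-$(a,c,d)$ $C_2$-tree created while resolving $\tau_{ab}$ and checking that the contributions organize into the antisymmetric expressions $(\omega_{kj}-\omega_{ki})$ and not symmetric ones. To guard against sign errors I would cross-check the clasper count against the group-theoretic computation: partial conjugation conjugates the relevant longitude by a meridian power, and expanding the degree-$2$ term of its Magnus expansion gives an independent, purely algebraic derivation of the same bilinear formula.
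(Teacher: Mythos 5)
Your proposal follows essentially the same route as the paper's proof: part (1) via Habiro's move 1 and invariance of linking numbers, part (2) via Corollary~\ref{cor} together with Lemmas~\ref{lem:lh} and \ref{keylemma}, and part (3) by splitting the doubled leaf of $\tau_{ij}$, sliding the resulting simple leaf along the relevant component, and evaluating the $C_2$-trees created by Lemma~\ref{lem:calculus}~(1)-(3) through the variation formulas of Lemma~\ref{keylemma}. The only substantive difference is that the paper first normalizes $\sigma$ up to $C_3$-equivalence as $\sigma_{(2)}\cdot\prod_{i<j}L_{ij}^{\mu_\sigma(ij)}$ using the classification of $C_2$-equivalence by linking numbers, so that the leaves met during the slide come in explicit blocks yielding $W_{ij}^{\pm\mu_\sigma(ij)}$ and $B_{ijk}^{\pm\mu_\sigma(jk)}$ with signs fixed by the figure -- precisely the device that disposes of the sign bookkeeping over an arbitrary $C_1$-presentation that you flag as the main obstacle.
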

\begin{proof}
Part (1) of the statement is clear. 
On one hand,  all four claspers depicted in Figure~\ref{2n-string} become trivial when taking the closure of the $i$th component of $\sigma_{\omega,v}$ and deleting all other components, so that the resulting knot is isotopic to the closure of the  $i$th component of $\sigma$, hence has same invariant $a_2$.
On the other hand, it is readily checked that surgery along all four claspers depicted in 
Figure~\ref{2n-string} preserves the linking numbers. 

Part (2) addresses the variation of $a_2(ij)$ and $\mu(ijk)$ under stacking $\sigma_\omega$ over the string link $\prod_{1\le i<j\le n} \nu_{ij}^{v_{ij}}$, that is, under surgery along the doubled $C_2$-trees $\nu_{st}$ ($s<t$). 
Since $\1_{\nu_{st}}$ is link-homotopic to $\1$, we immediately have that $\mu_{\sigma_{\omega,v}}(ijk)=\mu_{\sigma_{\omega}}(ijk)$ by Lemma~\ref{lem:lh}.
Moreover, using Corollary \ref{cor}, the fact that   $a_2(\sigma_{\omega,v};ij)-a_2(\sigma_{\omega};ij)=2v_{ij}$ follows directly from Lemma~\ref{keylemma}~(ii).

It thus remains to prove part (3), which investigates the effect of surgery along the $C_1$-trees $\tau_{ij}$ and 
$\tau_{ji}$ ($i<j$).
Recall from \cite{M-N,Matveev} that the $\Delta$-equivalence of (string) links, which is known to coincide with the $C_2$-equivalence (Example~\ref{Ex:Cn}), is classified by the linking numbers. This implies that $\sigma$ is $C_2$-equivalent to $\tau_{(1)} = \prod_{1\le i<j\le n}  L_{ij}^{\mu_\sigma(ij)}$, 
where $L_{ij}^{\pm 1}$ is the $2n$-component string link  represented on the left-hand side of  Figure \ref{fig:LT}, and where the product is taken using the lexicographic order. 
(Note in particular that any $C_1$-tree intersecting a single component can be deleted up to $C_2$-equivalence, using calculus of claspers \cite[\S~4.1]{Habiro}.)
Hence by Lemma~\ref{lem:calculus} we have 
$$ \sigma\stackrel{C_3}{\sim} \sigma_{(2)}\cdot \tau_{(1)}, $$
where $\sigma_{(2)}$ is an $n$-component bottom tangle obtained from the trivial one by surgery along $C_2$-trees. 
\begin{figure}[!h]
\includegraphics[scale=0.9]{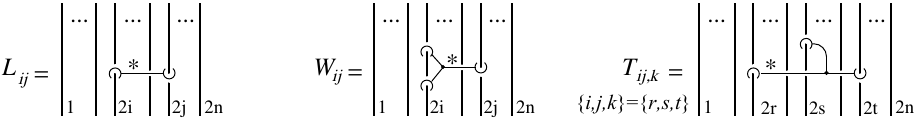} 
  \caption{The string links $L_{ij}$, $W_{ij}$ and $T_{ij,k}$ ($i<j$). Adding a positive half-twist on the $\ast$-marked edges defines the string links  $L_{ij}^{-1}$, $W_{ij}^{-1}$ and $T_{ij,k}^{-1}$.}\label{fig:LT}
\end{figure}

Figure \ref{fig:toto} illustrates the effect of stacking $\sigma$ over $\tau_{ij}$. 
\begin{figure}[!h]
\includegraphics[scale=0.8]{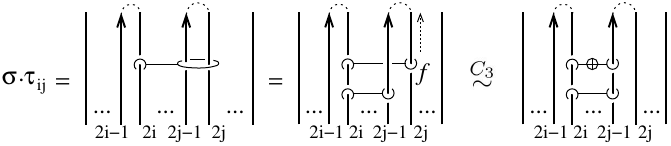} 
  \caption{}\label{fig:toto}
\end{figure} 
The first equivalence is given by Lemma \ref{lem:calculus}~(4). The idea is then to slide the leaf marked $f$ against the orientation of $\sigma$, until we obtain two parallel $C_1$-trees that differ by a twist as shown in the figure: Lemma~\ref{lem:calculus}~(5) then ensures that these two $C_1$-trees cancel. Sliding $f$ in this way, involves passing it across all leaves in $\sigma_{(2)}\cdot \tau_{(1)}$ intersecting the $j$th component; using Lemma~\ref{lem:calculus}~(1)-(3) repeatedly, this yields the equivalence
$$ \sigma\cdot \tau_{ij}\stackrel{C_3}{\sim} \sigma\cdot W_{ij}^{-\mu_\sigma(ij)}\prod_{k\neq i,j} T_{ij,k}^{\mu_\sigma(jk)}, $$
where $W_{ij}^{\pm 1}$ and $T_{ij,k}^{\pm 1}$ are the $2n$-component string link represented in Figure \ref{fig:LT}. 
The following variation formulas then directly follow from Lemma~\ref{keylemma}. 
\begin{equation*}
 \begin{split}
  a_{2}(\sigma\cdot \tau_{ij};rs) - a_{2}(\sigma;rs) = -\mu_\sigma(ij)\delta_{\{r,s\},\{i,j\}}; \\ 
  \mu_{\sigma\cdot \tau_{ij}}(srrs)-\mu_{\sigma}(srrs) \equiv \mu_\sigma(ij)\delta_{\{r,s\},\{i,j\}}  \pmod2; \\ 
  \mu_{\sigma\cdot \tau_{ij}}(rst) -\mu_{\sigma}(rst)  =\mu_\sigma(jk)\textrm{sgn}
  \left(\begin{smallmatrix} r & s & t \\
                                         i & j & k
\end{smallmatrix}\right),
 \end{split}
\end{equation*}
where sgn$\left(\begin{smallmatrix} r & s & t \\
                                         i & j & k
\end{smallmatrix}\right)$ denotes the signature of the permutation $\left(\begin{smallmatrix} r & s & t \\
                                         i & j & k
\end{smallmatrix}\right)$ if $\delta_{\{r,s,t\},\{i,j,k\}}=1$, and is by convention zero otherwise.

Figure~\ref{fig:tototo} likewise illustrates the effect of stacking $\sigma$ over $\tau_{ji}$. 
\begin{figure}[!h]
\includegraphics[scale=0.8]{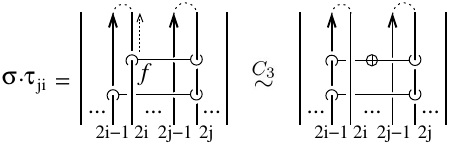} 
  \caption{}\label{fig:tototo}
\end{figure} 

A similar sliding process as discussed above, shows that 
$$ \sigma\cdot \tau_{ji}\stackrel{C_3}{\sim} \sigma\cdot W_{ij}^{-\mu_\sigma(ij)} \cdot \prod_{k\neq i,j} T_{ij,k}^{-\mu_\sigma(ik)},
$$
(note that the equivalence uses the symmetry of $W_{ij}$, illustrated in  Figure \ref{fig:exclasp}). 
Using Lemma~\ref{keylemma}, these equivalences yield variation formulas for the invariants $a_2$, $\mu(srrs)\pmod2$ and $\mu(rst)$ that are similar to the above ones. 
Combining these leads to the desired result. 
\end{proof}
\medskip

The clasp-pass classification of links, Theorem~1, follows from the combination of Theorem~\ref{key-theorem1}, Theorem~\ref{Markov}~(1) and Proposition~\ref{trace}.
For clarity, let us outline the proof of the \lq if\rq\, part of the statement. 

Let $\sigma$ and $\sigma'$ be $n$-component bottom tangles that share all invariants $a_2(i)~(1\le i\le n)$ and $\mu(ij)~(1\le i<j\le n)$, and suppose that there exists integers 
$x_{rs}\in \mathbb{Z}$ $(1\le r\neq s\le n)$ such that for all $i,j,k$ $(1\leq i<j<k\leq n)$ we have 
$$ a_2(\sigma';ij)-a_2(\sigma;ij)\equiv \mu_{\sigma}(ij)(x_{ij}+x_{ji})\pmod2  $$
$$\mu_{\sigma'}(ijk)-\mu_{\sigma}(ijk)=\mu_{\sigma}(ij)(x_{ki}-x_{kj})+\mu_{\sigma}(jk)(x_{ij}-x_{ik})
+\mu_{\sigma}(ki)(x_{jk}-x_{ji}).
$$
This in particular implies that there exists, for all $i,j$ with $1\leq i<j\leq n$, some $y_{ij}\in \mathbb{Z}$ such that 
$$ a_2(\sigma';ij)-a_2(\sigma;ij) = -\mu_{\sigma}(ij)(x_{ij}+x_{ji}) + 2y_{ij}.  $$
We can thus consider the square matrices $\omega=(x_{ij})_{i,j}\in \Omega_n$ (setting $x_{ii}=0$ for all $i$), and 
$v=(y_{ij})_{i,j}\in T$ (where $y_{ij}=0$ for all $i,j$ such that $i\ge j$), so that Proposition~\ref{trace} applies.
This result implies that all invariants $a_2(i)$, $\mu(ij)$, $a_2(ij)$ and $\mu(ijk)$ agree for $\sigma_{\omega,v}$ and $\sigma'$, so 
theses two bottom tangles are clasp-pass equivalent by Theorem~\ref{key-theorem1}.
The fact that the closures of $\sigma$ and $\sigma'$ are clasp-pass equivalent then follows from Theorem~\ref{Markov}~(1).
\medskip
 
The classification of links up to band-pass and band-$\shp$ moves (Theorem~2) is obtained in the same way from Theorems~\ref{key-theorem2}, \ref{key-theorem3}, \ref{Markov}~(2), (3) and Proposition~\ref{trace}~(1),(3).

\appendix
\section{Links and bottom tangles up to $\bps$ moves}\label{Appendix}

\subsection{The $\ps$- and $\bps$-moves}
 A natural variant of the pass- and $\shp$-moves, is the \emph{$\ps$-move} illustrated in the center of Figure \ref{fig:gamm}. 
\begin{figure}[!h]
\includegraphics[scale=0.85]{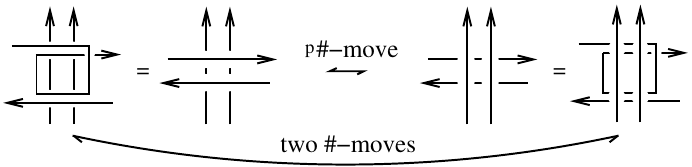}
  \caption{A $\ps$-move is realized by two $\shp$-moves.}\label{fig:gamm}
\end{figure} 
 The figure shows how, using the same argument as Murakami and Nakanishi in \cite[Fig.~A.7]{M-N}, a $\ps$-move can be realized by two applications of the $\shp$-move. The same trick shows that a pass-move is achieved by two $\ps$-moves. In fact, we have the following simple fact. 
 \begin{prop}\label{lem:ps}
 The equivalence relation generated by the $\ps$-move, coincides with the pass-equivalence.
 \end{prop}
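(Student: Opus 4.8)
The plan is to prove Proposition~\ref{lem:ps} by establishing two inclusions between the equivalence relation generated by the $\ps$-move and the pass-equivalence. The statement that a pass-move is achieved by two $\ps$-moves has already been noted in the paragraph preceding the proposition (using the same Murakami--Nakanishi trick of \cite[Fig.~A.7]{M-N} as for the $\ps$-to-$\shp$ realization). This immediately gives that pass-equivalence implies $\ps$-equivalence: any single pass-move can be replaced by a sequence of two $\ps$-moves, so every pass-move is a composition of $\ps$-moves.

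For the reverse inclusion, I would show that a single $\ps$-move can be realized by a sequence of pass-moves. The natural approach is a diagrammatic one: starting from the $\ps$-move tangle of Figure~\ref{fig:gamm} (center), I would exhibit a local isotopy-plus-pass-move deformation carrying one side of the $\ps$-move to the other. Since a $\ps$-move and a pass-move differ only in the relative orientation of the strands involved, the key observation is that reversing the orientation on one band does not change the underlying unoriented local picture, so the two moves agree as local modifications of the underlying \emph{unoriented} diagram. Concretely, I would argue that the tangle obtained by a $\ps$-move can be reached from the original by first applying a pass-move in a $3$-ball and then correcting the orientation mismatch by an ambient isotopy of the strands (which does not affect pass-equivalence).

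The key steps, in order, are: first, invoke the already-established fact that a pass-move is a composition of two $\ps$-moves to obtain one inclusion; second, draw the explicit local picture comparing the $\ps$-move and the pass-move and verify that they coincide as unoriented local moves; third, conclude that a $\ps$-move is realized by a single pass-move (possibly up to isotopy accounting for orientation), yielding the reverse inclusion. Combining the two inclusions gives equality of the two equivalence relations.

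The main obstacle I expect is making the orientation-comparison step precise and self-contained, since the entire content of the proposition lies in checking that the orientation difference between the $\ps$- and pass-moves is immaterial once one is allowed isotopies and repeated applications. In particular, one must be careful that the \emph{closed-up} or \emph{banded} nature of the strands does not obstruct the local deformation; the cleanest route is likely a direct figure, analogous to Figure~\ref{fig:gamm}, displaying the realization of a $\ps$-move by pass-moves, so that the verification reduces to inspecting a single local diagram rather than a formal argument.
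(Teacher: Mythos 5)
Your first inclusion is fine and matches the paper: a pass-move is realized by two $\ps$-moves (the Murakami--Nakanishi hairpin trick already invoked before the statement), so pass-equivalence implies $\ps$-equivalence.

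The reverse inclusion is where your argument has a genuine gap. You propose to realize a $\ps$-move by a \emph{single} pass-move, on the grounds that the two moves ``agree as unoriented local moves'' and that the orientation mismatch can be ``corrected by an ambient isotopy of the strands.'' This does not work: the orientation of a strand is intrinsic to the oriented link, and no ambient isotopy changes it. A pass-move may only be applied at a site where the strands actually carry the orientation pattern prescribed by the move, so if the local orientations are those of a $\ps$-move configuration, a pass-move simply cannot be applied there directly. The claim that orientation differences between such moves are immaterial is refuted within this very paper: the band-pass, band-$\ps$ and band-$\shp$ moves also differ only by the relative orientations of parallel strands, yet the classification results (Theorems~2 and~\ref{main-theorem_last}) show they generate three genuinely distinct equivalence relations; Remark~\ref{rem:saito} and the discussion after Proposition~\ref{lem:ps} make exactly this point. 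The correct argument --- and the one the paper gives --- is an explicit diagrammatic deformation (the figure in the proof) showing that a $\ps$-move is realized by \emph{two} pass-moves: one again bends a strand into a hairpin so that it traverses the local ball twice, once in each direction, and applies a pass-move at each passage. Your closing remark that ``the cleanest route is likely a direct figure'' points at the right proof, but the mechanism you actually propose (one pass-move plus an orientation-fixing isotopy) is not valid and would, if it were, prove false statements about the banded versions of these moves.
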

 \begin{proof}
 This readily follows from the above observation and the figure below, which illustrates the fact that a $\ps$-moves can be realized by two pass-moves.
$$ \textrm{\includegraphics[scale=0.85]{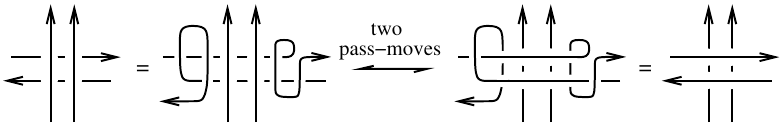}} $$
 \end{proof}
 
 A \emph{$\bps$ move} is naturally defined as a $\ps$-move such that each pair of parallel strands are from the same components.  
 
 The observation in Figure~\ref{fig:gamm} shows that the $\bps$ move defines an intermediate equivalence relation between the band-pass and the band-$\shp$ equivalence. 
However, the proof of Proposition~\ref{lem:ps} no longer applies to the case of a $\bps$ move. 
 As a matter of fact, the classification results stated below show that the $\bps$ equivalence  
is different from the band-pass equivalence or the band-$\shp$ equivalence.
\begin{rem}\label{rem:saito}
Since both the band-pass move and the $\bps$ move are generated by band-$\shp$ moves, the latter is equivalent to the \lq unoriented pass move\rq\, introduced by Saito in \cite{Saito}, which is the unoriented version of the local moves shown in Figure \ref{pass}, where parallel strands are assumed to belong to the same link component. 
\end{rem}

\subsection{Classification results}
This section briefly outlines how the techniques and results of this paper can be simply adapted to the 
$\bps$ equivalence. 

The key observation is that surgery along a pair of parallel $C_2$-trees yields $\bps$ equivalent tangles. This is apparent from Figure~\ref{yy}, which summarizes the proof of Lemma~\ref{triple linking}. 
In particular, this means that the technical Lemma~\ref{triple linking} still holds, when replacing the 
band-$\shp$ equivalence with the $\bps$ equivalence.

Since the $\bps$ move is achieved by two band-$\shp$ moves (Figure~\ref{fig:gamm}), it preserves
$a_2(i) \pmod2$. It also preserves $\mu(jiij) \pmod2$ since
$\varphi(ij)$ is a band-$\shp$ equivalence invariant (Theorem~\ref{key-theorem3}), and
$\mu(ij)$ is a $\bps$-equivalence invariant.  
Using the above-mentioned refined version of Lemma~\ref{triple linking} for 
$\bps$ equivalence, the proof of the \lq if' part of Theorem \ref{key-theorem3} can be adapted.
Hence we have the following.

\begin{theo}\label{key-theorem4}
Two $n$-component bottom tangles are $\bps$-equivalent if and only if they share all invariants $a_2(i) \pmod2$, $\mu(ij)$, $\mu(jiij) \pmod2$ and $\mu(ijk) \pmod2$, for any $i,j,k~(i<j<k)$.
\end{theo}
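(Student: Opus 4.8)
The plan is to follow the two-part structure of the proof of Theorem~\ref{key-theorem3}, exploiting the position of the $\bps$ move in the hierarchy band-pass $\Rightarrow \bps \Rightarrow$ band-$\shp$ noted above. Throughout I will use freely that a $\bps$ move is realized by two band-$\shp$ moves, and conversely that a band-pass move is realized by two $\bps$ moves (the band analogue of the observation in Figure~\ref{fig:gamm} that a pass-move is realized by two $\ps$-moves).

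For the \lq only if\rq{} part I would check that each of the four invariants is unchanged by a single $\bps$ move. Since $\bps$-equivalence refines band-$\shp$-equivalence, $\mu(ijk)\pmod2$ is immediately $\bps$-invariant by Theorem~\ref{key-theorem3}. For $\mu(ij)$, the key point—and what genuinely separates $\bps$ from band-$\shp$—is that a direct inspection of the move shows it preserves each linking number \emph{exactly} (the relevant signed crossings between components $i$ and $j$ cancel in pairs), not merely modulo $4$; this is the feature of the $\bps$ move responsible for its being strictly finer than band-$\shp$. Granting this, the band-$\shp$-invariance of $\varphi(ij)=4\mu(jiij)+\mu(ij)\pmod8$ (Theorem~\ref{key-theorem3}) combined with the exact invariance of $\mu(ij)$ forces $4\mu(jiij)\pmod8$, hence $\mu(jiij)\pmod2$, to be $\bps$-invariant. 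Finally, writing a $\bps$ move as two band-$\shp$ moves and using that a single self-$\shp$ move changes the Casson invariant of a knot by $\pm1$, the total change of $a_2(i)$ is even, so $a_2(i)\pmod2$ is preserved.

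For the \lq if\rq{} part I would first upgrade Lemma~\ref{triple linking} to $\bps$-equivalence: the deformation summarized in Figure~\ref{yy} already realizes surgery along a pair of parallel $C_2$-trees by $\bps$ moves, so the conclusion of Lemma~\ref{triple linking} holds verbatim with band-$\shp$ replaced by $\bps$. Now, given $\sigma$ and $\sigma'$ sharing $a_2(i)\pmod2$, $\mu(ij)$, $\mu(jiij)\pmod2$ and $\mu(ijk)\pmod2$, the differences $\mu_{\sigma'}(ijk)-\mu_{\sigma}(ijk)$ are even, so I would repeatedly apply this refined lemma to produce a bottom tangle $\sigma^{(2)}$, $\bps$-equivalent to $\sigma'$, with $\mu_{\sigma^{(2)}}(ijk)=\mu_{\sigma}(ijk)$ for all $i<j<k$, while leaving $a_2(i)\pmod2$, $\mu(ij)$ and $\mu(jiij)\pmod2$ unchanged. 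Note that here—unlike in Theorem~\ref{key-theorem3}—no preliminary adjustment of $\mu(ij)$ and no unknotting of the components is needed, since $\mu(ij)$ is shared on the nose and $a_2(i)\pmod2$ is already a $\bps$-invariant that is shared by hypothesis. At this stage $\sigma^{(2)}$ and $\sigma$ share exactly the complete set of band-pass invariants $a_2(i)\pmod2$, $\mu(ij)$, $\mu(jiij)\pmod2$ and $\mu(ijk)$ of Theorem~\ref{key-theorem2}, so they are band-pass equivalent; since a band-pass move is realized by two $\bps$ moves, they are $\bps$-equivalent, and therefore so are $\sigma$ and $\sigma'$.

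The main obstacle I anticipate is the exact (rather than mod~$4$) invariance of the linking number $\mu(ij)$ under a $\bps$ move, together with the clasper bookkeeping behind the refined version of Lemma~\ref{triple linking}: one must verify that the sequence of moves in Figure~\ref{yy} genuinely stays within $\bps$-equivalence and that the extra pairs of parallel $C_2$-trees produced along the way do not disturb the remaining invariants, which is precisely where Lemma~\ref{lem:milnorparallel} enters. Everything else is a direct transcription of the arguments already developed for Theorems~\ref{key-theorem2} and~\ref{key-theorem3}.
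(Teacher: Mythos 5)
Your proposal is correct and follows essentially the same route as the paper's (very terse) argument: the \lq only if\rq\ part via the hierarchy band-pass $\Rightarrow$ $\bps$ $\Rightarrow$ band-$\shp$ together with the exact invariance of $\mu(ij)$ and the band-$\shp$ invariant $\varphi(ij)$, and the \lq if\rq\ part via the $\bps$-refined version of Lemma~\ref{triple linking} (justified by Figure~\ref{yy}) followed by reduction to the band-pass classification of Theorem~\ref{key-theorem2}. The only nitpick is that a single self-$\shp$ move changes $a_2$ by an odd integer (it flips the Arf invariant, by Murakami) rather than necessarily by $\pm 1$, but this does not affect your parity argument.
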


Since $\tau_{ij}^2$ is $\bps$ equivalent to the trivial string link (see
Figure~\ref{band sharp cancel tau2}), Proposition~\ref{keyprop}~(3) and
Theorem~\ref{Markov}~(3) still hold for the $\bps$ equivalence.
Hence the proof of Theorem~2~(2) can be straightforwardly adapted to establish the following classification theorem.

\begin{theo}\label{main-theorem_last}
Let $L$ and $L'$ be $n$-component links, and let $\sigma$ and $\sigma'$ be
bottom tangles whose closures $\widehat{\sigma}$ and $\widehat{\sigma'}$
are equivalent to $L$ and $L'$ respectively.
The links $L$ and $L'$ are $\bps$-equivalent if and only if $\sigma$ and $\sigma'$
share all invariants $a_2(i)\pmod2~(i\in\{1,...,n\})$ and $\mu(ij)~(1\le i<j\le n)$ and
the following system has a solution over $\mathbb{Z}$.
\[
\left\{
\begin{array}{l}
\mu_{\sigma'}(jiij)-\mu_{\sigma}(jiij)\equiv \mu_{\sigma}(ij)(x_{ij}+x_{ji}) \pmod2\qquad\qquad\qquad\,\,\,\,\, (1\leq i<j \leq n)\\
\mu_{\sigma'}(ijk)-\mu_{\sigma}(ijk)\equiv \mu_{\sigma}(ij)(x_{ki}\!-\!x_{kj})+\mu_{\sigma}(jk)(x_{ij}\!-\!x_{ik})
+\mu_{\sigma}(ki)(x_{jk}\!-\!x_{ji})\pmod2 \\
 \qquad \qquad\qquad\qquad\qquad\qquad\qquad\qquad\qquad\qquad\qquad\qquad\qquad\qquad (1\leq i<j<k\leq n)
\end{array}
\right.
\]
\end{theo}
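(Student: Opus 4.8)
The plan is to assemble the statement from the three ingredients already established for the $\bps$ equivalence, mirroring exactly the derivation of Theorem~2 from Theorems~\ref{key-theorem2}, \ref{key-theorem3} and Proposition~\ref{trace}. First I would invoke the $\bps$ version of the Markov-type result: as observed just before the statement, since $\tau_{ij}^2$ is $\bps$ equivalent to the trivial string link, Proposition~\ref{keyprop}~(3) and Theorem~\ref{Markov}~(3) remain valid for the $\bps$ equivalence. Thus $L$ and $L'$ are $\bps$-equivalent if and only if there exists a matrix $\delta\in\Delta$ such that $\sigma'$ is $\bps$-equivalent to $\sigma_\delta$.

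Second, I would translate this bottom-tangle $\bps$-equivalence into equalities of classifying invariants via Theorem~\ref{key-theorem4}: $\sigma'$ is $\bps$-equivalent to $\sigma_\delta$ precisely when the two tangles share all invariants $a_2(i)\pmod2$, $\mu(ij)$, $\mu(jiij)\pmod2$ and $\mu(ijk)\pmod2$. The crux is then to compute the invariants of $\sigma_\delta$ in terms of those of $\sigma$, and here I would apply Proposition~\ref{trace} with $\omega=\delta\in\Omega_n$ and $v=0$, so that $\sigma_\delta=\sigma_{\delta,0}$. Part~(1) gives $a_2(\sigma_\delta;i)=a_2(\sigma;i)$ and $\mu_{\sigma_\delta}(ij)=\mu_\sigma(ij)$, while part~(3) yields $\mu_{\sigma_\delta}(jiij)\equiv\mu_\sigma(jiij)+\mu_\sigma(ij)(\delta_{ij}+\delta_{ji})\pmod2$ together with $\mu_{\sigma_\delta}(ijk)=\mu_\sigma(ijk)+\mu_\sigma(ij)(\delta_{kj}-\delta_{ki})+\mu_\sigma(jk)(\delta_{ik}-\delta_{ij})+\mu_\sigma(ki)(\delta_{ji}-\delta_{jk})$.

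Substituting these formulas, the conditions that $\sigma'$ and $\sigma_\delta$ share $a_2(i)\pmod2$ and $\mu(ij)$ become $a_2(\sigma';i)\equiv a_2(\sigma;i)\pmod2$ and $\mu_{\sigma'}(ij)=\mu_\sigma(ij)$, which are precisely the two scalar invariants listed in the statement. The remaining conditions, that $\mu(jiij)$ and $\mu(ijk)$ agree modulo~2, become exactly the two families of congruences of the displayed system once one sets $x_{ij}=\delta_{ij}$ and reduces the triple-linking relation mod~2. I expect the only point requiring a word of care is the passage between ``there exists $\delta\in\Delta$'' and ``the system has a solution over $\mathbb{Z}$'': since both families of equations are congruences modulo~2, any integer solution may be reduced entrywise modulo~2 to a $\{0,1\}$-valued solution, that is, to a matrix in $\Delta$, and conversely a matrix in $\Delta$ is in particular an integer solution. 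This reduction---which is precisely why the triple-linking equation is stated mod~2 here, in contrast with the integral equation of the band-pass case in Theorem~2~(1), reflecting that $\mu(ijk)$ is only a mod~2 invariant of $\bps$ equivalence by Theorem~\ref{key-theorem4}---closes the chain of equivalences and completes the proof.
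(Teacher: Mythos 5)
Your proposal is correct and follows essentially the same route as the paper, which simply notes that Proposition~\ref{keyprop}~(3) and Theorem~\ref{Markov}~(3) carry over to the $\bps$ setting and that the argument for Theorem~2 then adapts via Theorem~\ref{key-theorem4} and Proposition~\ref{trace}. Your explicit treatment of the mod~2 reduction between integer solutions and matrices in $\Delta$ is exactly the ``straightforward adaptation'' the paper leaves to the reader.
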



\begin{thebibliography}{00}

\bibitem{Casson} A. J. Casson, {\it Link cobordism and Milnor’s invariant}, Bull. London Math. Soc. 
{\bf 7} (1975), 39--40.  

\bibitem{C-F} L. Cervantes and R. A. Fenn, 
{\it Boundary links are homotopy trivial}, Quart. J. Math. Oxford Ser. (2),
{\bf 39} (1988), 151--158.

	
\bibitem{CST} J. Conant, R. Schneiderman and P. Teichner, 
{\it Whitney tower concordance of classical links}, 
Geom. Topol., {\bf 16} (2012), 1419--1479. 

\bibitem{CST2} J. Conant, R. Schneiderman and P. Teichner, \emph{Clasper concordance, Whitney towers and repeating
Milnor invariants}, J. Knot Theory Ram. {\bf 34} (2025), Paper No. 2550013.

\bibitem{FY} T. Fleming and A. Yasuhara, \emph{Milnor’s invariants and self $C_k$-equivalence}, Proc. Amer. Math. Soc. \textbf{137} (2009) 761--770.

\bibitem{GGP} S. Garoufalidis, M. Goussarov and M. Polyak, \emph{Calculus of clovers and finite type invariants of 3-manifolds}, Geom. Topol. \textbf{5} (2001) 75--108.
     
\bibitem{Goussarov_graphs} M. Goussarov, \emph{Variations of knotted graphs. {T}he geometric technique of
              $n$-equivalence}, Algebra i Analiz \textbf{12} (2000) 79--125. 


\bibitem{Graff} E. Graff, \emph{On braids and links up to link homotopy}, J. Math. Soc. Japan {\bf 76} (2024) 755--790 

\bibitem{HL} N. Habegger and X.S. Lin, {\it The classification of links up to link-homotopy}, J. Amer. Math. Soc. {\bf 3} (1990) 389--419.

\bibitem{Habiro0} K. Habiro, {\it Clasp-pass moves on knots}, unpublished (1993). 

\bibitem{Habiro} K. Habiro, {\it Claspers and finite type invariants of links}, Geom. Topol. {\bf 4} (2000) 1--83.

\bibitem{Habiro2} K. Habiro, {\it Bottom tangles and universal invariants}, {Algebr. Geom. Topol.}, {\bf 6}  (2006) {1113--1214}.   

\bibitem{Kauffman0} L.H. Kauffman, {\it Link manifolds}, {Michigan Math. J.}, {\bf 21} (1974) {33--44}.
    
\bibitem{Kauffman} L.H. Kauffman, {\it Formal Knot Thoery},  Math. Notes No. 30, Princeton Univ. Press, 1983.  

\bibitem{Levine} J. P. Levine, {\it The $\overline{\mu}$-invariants of based links}, Differential topology (Siegen, 1987), 87--103, Lecture Notes in Math. \textbf{1350}, Springer, Berlin, (1988).

\bibitem{Martin}   T.E. Martin,  {\it Classification of links up to 0-solvability},  Topology Appl., {\bf 317}  (2022), Paper No. 108158.

 \bibitem{Matveev}  S.V. Matveev,  {\it Generalized surgeries of three-dimensional manifolds and representations of homology spheres}, {Mat. Zametki},  {\bf 42} (1987), {268--278, 345} (in Russian): English transl. in: Math. Notes {\bf 42}, 651--656 (1987)
   
\bibitem{M1} J. Milnor, {\it Link groups}, Ann. of Math. (2) {\bf 59} (1954) 177---195.

\bibitem{M2} J. Milnor, {\it Isotopy of links}, Algebraic geometry and topology, A symposium in honor of S. Lefschetz,  
      Princeton University Press, Princeton, N. J., (1957) 280--306. 

\bibitem{M-W-Y} H.A. Miyazawa, K. Wada  and A. Yasuhara,  {\it Classification of string links up to {$2n$}-moves and link-homotopy}, {Ann. Inst. Fourier (Grenoble)}, {\bf 71} (2021) {889--911}.

\bibitem{Meilhan} J.B. Meilhan, {\it On Vassiliev invariants of order two for string links}, {J. Knot Theory Ram.},  {\bf 14} (2005) {665--687}.
 
\bibitem{MYpacific} J.B. Meilhan and A. Yasuhara, {\it On $C_n$-moves for links}, Pacific J. Math. {\bf 238} (2008) 119--143.  

\bibitem{MY} J.B. Meilhan and A. Yasuhara, {\it Characterization of finite type string link invariants of degree $<5$}, Math. Proc. Cambridge Philos. Soc. {\bf 148} (2010) 439--472. 

\bibitem{MYabelian} J.B. Meilhan and A. Yasuhara, {\it Abelian quotients of the string link monoid}, Alg. Geom. Topol. \textbf{14} (2014) 1461--1488. 

\bibitem{Murakami} H. Murakami,  {\it Some metrics on classical knots},  {Math. Ann.},  {\bf 270} (1985) {35--45}.

\bibitem{M-N} H. Murakami and Y. Nakanishi,
{\it On a certain move generating link-homology},
Math. Ann., {\bf 284} (1989) {75--89}.

\bibitem{Ohtsuki_book} T. Ohtsuki, \emph{Quantum invariants}, Series on Knots and Everything \textbf{29}, World Scientific Ed., River Edge, NJ, 2002. 

\bibitem{Saito} M. Saito, 
{\it On the unoriented {S}ato-{L}evine invariant},
{J. Knot Theory Ram.}, {\bf 2} (1993) {335--358}.

\bibitem{Shibuya0} T. Shibuya, 
{\it Self $\shp$-unknotting operation of links}, 
Mem. Osaka Inst. Tech. Ser. A, {\bf 34} (1989) 9--17.

\bibitem{Shibuya} T. Shibuya, 
{\it Self-{$\shp$}-equivalences of homology boundary links},
 Kobe J. Math., {\bf 9} (1992) {159--162}.
  

\bibitem{S-Y} T. Shibuya and A. Yasuhara,
     {\it Classification of links up to self pass-move},
   {J. Math. Soc. Japan},
 {\bf 55} (2003) {939--946}.

 \bibitem{stall} J. Stallings, \emph{Homology and central series of groups}, J. Algebra $\mathbf{2}$ (1965) 170--181.
 
\bibitem{T-Y} K. Taniyama and A. Yasuhara,
{\it Clasp-pass moves on knots, links and spatial graphs}, 
Topology Appl. {\bf 122} (2002) 501--529.

\bibitem{Tsukamoto} T. Tsukamoto,
{\it Clasp-pass move and {V}assiliev invariants of type three for knots},
{Proc. Amer. Math. Soc.},
{\bf 128} (2000)
{1859--1867}. 

\end{thebibliography}
\end{document}